\renewcommand{\d}{\mathrm{d}}
\newcommand{\e}{\mathrm{e}}
\newcommand{\R}{\mathbb{R}}
\newcommand{\C}{\mathbb{C}}
\newcommand{\Z}{\mathbb{Z}}
\newcommand{\bi}{{\bm{i}}}
\newcommand{\bj}{{\bm{j}}}
\newcommand{\bs}{\bm{s}}
\newcommand{\bx}{\bm{x}}
\newcommand{\bo}{\bm{0}}
\newcommand{\hI}{\mathcal{I}}
\newcommand{\hL}{\mathcal{L}}
\newcommand{\ds}{\displaystyle}
\newcommand{\eps}{\varepsilon}
\newcommand{\daoshu}[2]{\dfrac{\d #1}{\d #2}}
\newcommand{\piandao}[2]{\dfrac{\partial #1}{\partial #2}}
\newcommand{\Cp}{C_\text{\rm per}}
\newtheorem{example}{Example}[section]
\newtheorem{remark}{Remark}[section]
\title{Maximum principle preserving exponential time differencing schemes for the nonlocal Allen-Cahn equation}
\author{
Qiang Du\footnotemark[2]
\and Lili Ju\footnotemark[3]
\and Xiao Li\footnotemark[4]
\and Zhonghua Qiao\footnotemark[5]
}
\begin{document}

\maketitle
\slugger{mms}{xxxx}{xx}{x}{x--x}

\renewcommand{\thefootnote}{\fnsymbol{footnote}}

\footnotetext[2]
{Department of Applied Physics and Applied Mathematics, Columbia University, New York, NY 10027, USA (qd2125@columbia.edu).
Q. Du's work is partially supported by US National Science Foundation grant DMS-1558744,
US AFOSR MURI Center for Material Failure Prediction Through Peridynamics,
and US Army Research Office MURI grant W911NF-15-1-0562.}
\footnotetext[3]
{Department of Mathematics, University of South Carolina, Columbia, SC 29208, USA (ju@math.sc.edu).
L. Ju's work is partially supported by US National Science Foundation grant DMS-1521965
and US Department of Energy grants DE-SC0008087-ER65393 and DE-SC0016540.}
\footnotetext[4]
{Applied and Computational Mathematics Division, Beijing Computational Science Research Center,
Beijing 100193, China (xiaoli@csrc.ac.cn).
X. Li's work is partially supported by China Postdoctoral Science Foundation grant 2017M610748.}
\footnotetext[5]
{Department of Applied Mathematics, The Hong Kong Polytechnic University, Hung Hom, Kowloon, Hong Kong (zhonghua.qiao@polyu.edu.hk).
Z. Qiao's work is partially supported by the Hong Kong Research Council GRF grants 15302214 and 15325816
and the Hong Kong Polytechnic University fund 1-ZE33.}

\renewcommand{\thefootnote}{\arabic{footnote}}

\begin{abstract}
The nonlocal Allen-Cahn (NAC) equation is a generalization of the classic Allen-Cahn equation
by replacing the Laplacian with a parameterized nonlocal diffusion operator,
and satisfies the maximum principle as its local counterpart.
In this paper, we develop and analyze first and second order exponential time differencing (ETD) schemes
for solving the NAC equation, which unconditionally preserve the discrete maximum principle.
The fully discrete numerical schemes are obtained by applying the stabilized ETD approximations
for time integration with the quadrature-based finite difference discretization in space.
We derive their respective optimal maximum-norm error estimates
and further show that the proposed schemes are asymptotically compatible,
i.e., the approximate solutions always converge to the classic Allen-Cahn solution
when the horizon, the spatial mesh size and the time step size go to zero.
We also prove that the schemes are energy stable in the discrete sense.
Various experiments are performed to verify these theoretical results and to investigate
numerically the relation between the discontinuities and the nonlocal parameters.
\end{abstract}

\begin{keywords}
Nonlocal Allen-Cahn equation, discrete maximum principle,
exponential time differencing, asymptotic compatibility, energy stable.
\end{keywords}

\begin{AMS}
65M12, 65M15, 35Q99, 65R20
\end{AMS}

\pagestyle{myheadings}
\thispagestyle{plain}
\markboth{QIANG DU, LILI JU, XIAO LI, AND ZHONGHUA QIAO}
{Maximum Principle Preserving ETD Schemes for the Nonlocal Allen-Cahn Equation}

\section{Introduction}

In this paper,
we consider numerical solutions of the initial-boundary-value problem of the nonlocal Allen-Cahn (NAC) equation as follows:
\begin{subequations}
\label{nonlocalAC}
\begin{align}
u_t-\eps^2\hL_\delta u+u^3-u=0, & \qquad\bx\in\Omega,\ t\in(0,T),\label{nonlocalAC1}\\
\text{$u(\cdot,t)$ is $\Omega$-periodic}, & \qquad t\in[0,T],\\
u(\bx,0)=u_0(\bx), & \qquad\bx\in\overline{\Omega},
\end{align}
\end{subequations}
where $u(\bx,t)$ denotes the unknown function, $\Omega=(0,X)^d$ is a hypercube domain in $\R^d$,
$\eps>0$ is an interfacial parameter, and $\hL_\delta$ is a nonlocal operator,
parameterized by the positive horizon parameter $\delta> 0$ measuring the range of nonlocal interactions.
Assume that $\hL_\delta$ is
defined by
\begin{equation}
\label{nonlocalopt_def}
\hL_\delta u(\bx)=\frac{1}{2}\int_{B_\delta(\bo)}\rho_\delta(|\bs|)\big(u(\bx+\bs)+u(\bx-\bs)-2u(\bx)\big)\,\d\bs,\qquad\bx\in\Omega
\end{equation}
with $B_\delta(\bo)$ denoting the ball in $\R^d$ centered at the origin with the radius $\delta$
and $\rho_\delta:[0,\delta]\to\R$ being  a nonnegative kernel function.
To enforce the consistency, as $\delta\to0$,
of the nonlocal operator $\hL_\delta$ with the standard Laplacian operator $\hL_0:=\Delta$,
we further assume the kernel $\rho_\delta$ satisfies
$$\int_0^\delta r^{1+d}\rho_\delta(r)\,\d r=\frac{2d}{S_d}$$
with $S_d$ being the area of the unit sphere in $\R^d$,
or equivalently,
\begin{equation}
\label{kernel_2ndmoment}
\int_{B_\delta(\bo)}|\bs|^2\rho_\delta(|\bs|)\,\d\bs=2d.
\end{equation}
Note that \eqref{kernel_2ndmoment} also means that the kernel $\rho_\delta$ has a finite second order moment.
The continuum property of the nonlocal operator $\hL_\delta$ gives \cite{Du16hbk4,DuGuLeZh12}
\begin{equation}
\label{continuous_consistency}
\max_{\bx\in\Omega}|\hL_\delta u(\bx)-\hL_0u(\bx)|\le C\delta^2\|u\|_{C^4},\qquad\forall\,u\in\Cp^4(\overline{\Omega}),
\end{equation}
where $C>0$ is a constant independent of $\delta$.
The local limit of the NAC problem \eqref{nonlocalAC} is exactly
the classic (local) Allen-Cahn (LAC) equation taking the following form:
\begin{subequations}
\label{localAC}
\begin{align}
\varphi_t-\eps^2\hL_0\varphi+\varphi^3-\varphi=0, & \qquad\bx\in\Omega,\ t\in(0,T),\\
\text{$\varphi(\cdot,t)$ is $\Omega$-periodic}, & \qquad t\in[0,T],\\
\varphi(\bx,0)=\varphi_0(\bx), & \qquad\bx\in\overline{\Omega}.
\end{align}
\end{subequations}
The LAC equation \eqref{localAC} is a well-known phase field model
used to describe the motion of anti-phase boundaries in crystalline solids \cite{AlCa79}.

In recent years, the nonlocal models involving the nonlocal operator \eqref{nonlocalopt_def},
such as the NAC equation \eqref{nonlocalAC},
have appeared in a variety of applications ranging from physics, materials science to finance and image processing,
for instance, phase transition \cite{Bates06,Fife03}, peridynamics continuum theory \cite{Silling00,SiLe10},
image analyses \cite{GaGa05,GiOs08}, and nonlocal heat conduction \cite{BoDu10}.
Rigorous mathematical analysis of nonlocal models can be found in the literatures, e.g., \cite{AnMaRoTo10,Bates06,DuZh11},
and a more systematic mathematical framework of nonlocal problems was developed in \cite{DuGuLeZh12,DuGuLeZh13}
in parallel to the analysis for the classic partial differential equations.
Since the exact/analytic solutions of these nonlocal models are usually not available,
numerical methods play an important role in studying these models.
Bates et al. \cite{BaBrHa09} considered a finite difference discretization of the NAC equation with an integrable kernel
and developed an $L^\infty$ stable and convergent numerical scheme by treating the nonlinear and nonlocal terms explicitly.
A similar technique was applied on the NAC-type problem coupled with a heat equation
and an $L^\infty$ stable and convergent numerical scheme was obtained \cite{ArBrHa10}.
For the nonlocal diffusion models with more general kernels and variable boundary conditions,
finite difference and finite element approximations were addressed in \cite{DuTaTiYa18,TaTiDu17,TiDu13,ZhDu10}.
To illustrate the limiting behaviors of the numerical solution of the nonlocal model
to the exact solution of the corresponding local counterpart,
Tian and Du proposed in \cite{TiDu14} the concept of \emph{asymptotic compatibility},
and the spectral-Galerkin approximation of the NAC equation was then proved to be asymptotically compatible in \cite{DuYa16}.
The convergence of asymptotically compatible schemes is insensitive to the choices of modeling and discretization parameters
so that such schemes provide robust numerical approximations of nonlocal models.

As a nonlocal analogue of the LAC equation \eqref{localAC},
the NAC equation \eqref{nonlocalAC} possesses some similar properties.
First, it can be shown that the NAC equation \eqref{nonlocalAC} satisfies a \emph{maximum principle}:
if the initial value and the boundary conditions are bounded by $1$,
then the entire solution is also bounded by $1$, i.e.,
$$\|u(\cdot,t)\|_{L^\infty}\le1,\quad\forall\,t>0.$$
Second, as a phase field type model,
the NAC equation \eqref{nonlocalAC1} can be viewed as an $L^2$ gradient flow with respect to the energy functional
\begin{equation}
\label{nonlocalAC_energy}
E(u)=\int_\Omega\Big(\frac{1}{4}(u^2(\bx)-1)^2-\frac{\eps^2}{2}u(\bx)\hL_\delta u(\bx)\Big)\,\d\bx,
\end{equation}
and thus, the solution $u$ to \eqref{nonlocalAC} decreases the energy \eqref{nonlocalAC_energy} in time, that is,
\begin{equation}
\label{energy_decay}
E(u(t_2))\le E(u(t_1)),\quad\forall\,t_2\ge t_1\ge0,
\end{equation}
which is often called the energy dissipation law.
Such two properties are important in the study of the stability of the solution to \eqref{nonlocalAC},
and whether they could be inherited in the discrete level is a significant issue in numerical simulations.
A major objective of this work is
develop maximum principle preserving and energy stable numerical schemes for approximating the NAC equation \eqref{nonlocalAC}.

Energy stability has been widely investigated for numerical schemes of  classic PDE-based phase field models,
such as convex splitting schemes \cite{QiSu14,WaWiLo09}, stabilized schemes \cite{ShYa10,XuTa06},
invariant energy quadratization methods \cite{Yang16,YaHa17} and so on.
It is interesting to study whether similar analysis can be applied
to nonlocal phase field models due to the lack of the high-order diffusion term.
Guan et al. \cite{GuWaWi14} constructed a convex splitting scheme for the nonlocal Cahn-Hilliard equation
by treating the nonlinear term implicitly and setting the nonlocal term into the explicit part.
Their scheme allows one to evaluate the nonlocal term explicitly only once at each time step,
but the nonlinear iterations are still inevitable.
In order to avoid the nonlinear iterations, a stabilized scheme was
the linear stabilization strategy was adopted in \cite{DuJuLiQi18} to develop the stabilized linear schemes
which can be solved efficiently by using the fast Fourier transform.
The energy stability of the fully discrete schemes were only shown under the assumption
that the stabilizer depends implicitly on the uniform bound of the numerical solution.

The numerical method we will adopt in this work is the so-called exponential time differencing (ETD),
which involves exact integration of the governing equations
followed by an explicit approximation of a temporal integral involving the nonlinear terms.
The  ETD schemes was systematically studied  in \cite{BeKeVo98}
and then further developed by Cox and Matthews with the applications on stiff systems \cite{CoMa02},
where higher-order multistep and Runge-Kutta versions of these schemes were described.
Hochbruck and Ostermann provided several nice reviews
on the ETD Runge-Kutta methods \cite{HoOs05} and the ETD multistep methods \cite{HoOs10}.
In addition, the convergence of these methods were analyzed in detail under the analytical framework therein.
The linear stabilities of some ETD and modified ETD schemes were investigated by Du and Zhu \cite{DuZh04,DuZh05}.

A distinctive feature of ETD schemes is the exact evaluation of the contribution of the linear part,
which provides satisfactory stability and accuracy even though the linear terms have strong stiffness.
Such an advantage leads to some successful applications of ETD schemes on phase field models
which usually yield highly stiff ODE systems after suitable spatial discretizations.
Ju et al. developed stable and compact ETD schemes and their fast implementations
for Allen-Cahn \cite{JuZhZhDu15,ZhJuZh16}, Cahn-Hilliard \cite{JuZhDu15},
and elastic bending energy models \cite{WaJuDu16} by utilizing suitable linear splitting techniques.
All the proposed ETD schemes are explicit and thus highly efficient for  practical implementations.
A localized compact ETD algorithm based on the overlapping domain decomposition
was firstly used in \cite{ZhZhWaJuDu16} for extreme-scale phase field simulations
of three-dimensional coarsening dynamics in the supercomputer,
and the results showed excellent parallel scalability of the method.
In \cite{JuLiQiZh18}, the ETD multistep method was applied on the epitaxial growth model without slope selection \cite{LiLi03},
and the energy stability and the error estimates were established rigorously,
which is the first work to analyze the energy stability
and convergence of the ETD schemes for phase field models in the theoretical level.
To complete the theoretical analysis,
there is no need for any assumptions on the numerical solutions
due to the specific property of the logarithm term in the no-slope-selection model.
However, for other phase field models, such as the Cahn-Hilliard equation,
the assumptions on the uniform boundedness of the numerical solutions
or the Lipschitz continuity of some nonlinear functions are inevitable to ensure the energy stability.
Therefore, for the models whose solutions satisfy the maximum principle essentially,
it is highly desired to develop numerical approximations preserving the maximum principle in the discrete sense.

One of the typical phase field models  satisfying the maximum principle is the LAC equation \eqref{localAC}.
Recently, there have been some investigations on the maximum principle preserving numerical schemes for \eqref{localAC}.
Tang and Yang \cite{TaYa16} proved that the first order implicit-explicit schemes, with or without the stabilizing term,
preserve the maximum principle under some condition on the time step size.
Then, the energy stability and the maximum-norm error estimates are obtained by using the discrete maximum principle.
Shen et al. \cite{ShTaYa16} generalized the results presented in \cite{TaYa16} to the case of
the Allen-Cahn-like equation in a more abstract form with the potential and mobility satisfying some certain conditions.
Hou et al. \cite{HoTaYa17} studied the numerical approximation of the fractional Allen-Cahn equation
by considering the conventional Crank-Nicolson scheme.
They proved that the Crank-Nicolson scheme preserves the maximum principle
and this is the first work on the second order schemes preserving the maximum principle.
More than ten years ago, Du and Zhu \cite{DuZh05} showed that
the first order ETD scheme in the space-continuous version for \eqref{localAC} satisfies the maximum principle,
where some properties of the heat kernel were used in their proof.
However, the fully discrete ETD schemes were never studied.

The organization of this paper is as follows.
In Section 2, we construct the first and second order ETD time-stepping schemes for the NAC equation
with the quadrature-based finite difference approximation being used for spatial discretization.
Efficient implementation issues of the schemes are also briefly discussed.
In Section 3, both schemes are shown to satisfy the discrete maximum principle unconditionally.
Error estimates and asymptotic compatibility of the schemes are obtained in Section 4
and the discrete energy stability proved in Section 5.
Various numerical experiments are carried out in Section 6 to verify the theoretical results
and to investigate the effects of the nonlocal parameters.
Finally, some concluding remarks are given in Section 7.

\section{Fully discrete exponential time differencing schemes}

In this section, we present the  fully discrete ETD schemes for the NAC equation in general dimensions,
where the finite difference method, based on the contribution made in \cite{DuTaTiYa18},
is adopted for the spatial discretization of the nonlocal diffusion operator.
In particular, we also give the specific expression of the discrete nonlocal operator in 2D later.

\subsection{Quadrature-based finite difference semi-discretization}

Given a positive integer $N$,
we set $h=X/N$ as the uniform square mesh size and define $\bx_\bi=h\bi$ as the nodes in the mesh,
where $\bi\in\Z^d$ denotes a multi-index.
Let $\Omega_h$ be the set of nodes in the domain $\overline{\Omega}$.
At any node $\bx_\bi$, the nonlocal operator \eqref{nonlocalopt_def} can be rewritten as
\begin{equation}
\label{nonlocalopt_node}
\hL_\delta u(\bx_\bi)=
\frac{1}{2}\int_{B_\delta(\bo)}\frac{u(\bx_\bi+\bs)+u(\bx_\bi-\bs)-2u(\bx_\bi)}{|\bs|^2}\|\bs\|_1
\cdot\frac{|\bs|^2}{\|\bs\|_1}\rho_\delta(|\bs|)\,\d\bs,
\end{equation}
where $\|\cdot\|_1$  stands for the vector $1$-norm.
Then, a quadrature-based finite difference discretization of the nonlocal operator \eqref{nonlocalopt_node}
can be defined as \cite{DuTaTiYa18}
$$\hL_{\delta,h}u(\bx_\bi)
=\frac{1}{2}\int_{B_\delta(\bo)}\hI_h\bigg(\frac{u(\bx_\bi+\bs)+u(\bx_\bi-\bs)-2u(\bx_\bi)}{|\bs|^2}\|\bs\|_1\bigg)
\frac{|\bs|^2}{\|\bs\|_1}\rho_\delta(|\bs|)\,\d\bs,$$
where $\hI_h$ represents the piecewise $d$-multilinear interpolation operator with respect to $\bs$ associated with the mesh.
More precisely, for a function $v(\bs)$,
the interpolation $\hI_hv(\bs)$ is piecewise linear with respect to each component of the spatial variable $\bs$ and
$$\hI_hv(\bs)=\sum_{\bs_\bj}v(\bs_\bj)\psi_\bj(\bs),$$
where $\psi_\bj$ is the piecewise $d$-multilinear basis function
satisfying $\psi_\bj(\bs_\bi)=0$ when $\bi\not=\bj$ and $\psi_\bj(\bs_\bj)=1$.
Therefore, the resulting quadrature-based finite difference discretization
of the nonlocal operator \eqref{nonlocalopt_node} reads
\begin{equation}
\label{nonlocalopt_discrete}
\hL_{\delta,h}u(\bx_\bi)
=\sum_{\bo\not=\bs_\bj\in B_\delta(\bo)}
\frac{u(\bx_\bi+\bs_\bj)+u(\bx_\bi-\bs_\bj)-2u(\bx_\bi)}{|\bs_\bj|^2}\|\bs_\bj\|_1\beta_\delta(\bs_\bj),
\quad\bx_\bi\in\Omega_h,
\end{equation}
where the periodicity conditions are used for the nodes not in $\Omega_h$, and
\begin{equation}
\label{nonlocalopt_discoef_beta}
\beta_\delta(\bs_\bj)=\frac{1}{2}\int_{B_\delta(\bo)}\psi_\bj(\bs)\frac{|\bs|^2}{\|\bs\|_1}\rho_\delta(|\bs|)\,\d\bs.
\end{equation}
It is easy to check that the operator $\hL_{\delta,h}$ is self-adjoint and negative semi-definite.

The discretized scheme \eqref{nonlocalopt_discrete} is proposed in \cite{DuTaTiYa18}
for the problem with a homogeneous Dirichlet-type nonlocal constraint
and it has been proved \cite{DuTaTiYa18,TiJuDu17} that, for any fixed $\delta>0$,
the discrete operators $\hL_{\delta,h}$ is consistent to $\hL_\delta$
with the errors $\mathcal{O}(h^2)$ as $h\to0$.
For the case of periodic boundary condition considered here, all similar estimates also hold,
so we give the following consistency estimates without proof.

\begin{lemma}
\label{lem_discrete_consistency}
Assume that $u\in\Cp^4(\overline{\Omega})$,
then it holds that
\begin{equation}
\label{discrete_consistency}
\max_{\bx_\bi\in\Omega_h}|\hL_{\delta,h}u(\bx_\bi)-\hL_\delta u(\bx_\bi)|\le Ch^2\|u\|_{C^4},
\end{equation}
where $C>0$ is a constant independent of $\delta$ and $h$.
\end{lemma}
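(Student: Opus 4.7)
The plan is to follow the quadrature-error strategy developed in \cite{DuTaTiYa18,TiJuDu17}, with only cosmetic modifications for the periodic setting. For a fixed node $\bx_\bi\in\Omega_h$, I would introduce the auxiliary function and weight
$$F(\bs):=\frac{u(\bx_\bi+\bs)+u(\bx_\bi-\bs)-2u(\bx_\bi)}{|\bs|^2}\|\bs\|_1,\qquad W(\bs):=\frac{|\bs|^2}{\|\bs\|_1}\rho_\delta(|\bs|),$$
with $F(\bo):=0$ extended by continuity. Comparing \eqref{nonlocalopt_node} and \eqref{nonlocalopt_discrete} then yields
$$\hL_\delta u(\bx_\bi)-\hL_{\delta,h}u(\bx_\bi)=\tfrac{1}{2}\int_{B_\delta(\bo)}\bigl(F(\bs)-\hI_hF(\bs)\bigr)W(\bs)\,\d\bs,$$
so the task reduces to bounding a weighted $L^1$ norm of the piecewise $d$-multilinear interpolation error of $F$.

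Next, I would decompose $B_\delta(\bo)$ into the mesh cells on which $\hI_hF$ is a tensor-product linear map. Since the mesh is axis-aligned, every cell lies inside a single closed orthant, where $\|\bs\|_1$ reduces to a linear function and $F$ inherits the smoothness of $u$ away from the origin; on such a cell $\tau$ the classical estimate
$$\|F-\hI_hF\|_{L^\infty(\tau)}\le Ch^2\|\nabla^2F\|_{L^\infty(\tau)}$$
is available.

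The main obstacle will be the loss of regularity of $F$ at the origin. A fourth-order Taylor expansion of $u$ around $\bx_\bi$ gives $F(\bs)=O(|\bs|)\|u\|_{C^2}$, but two further differentiations yield only $\|\nabla^2F(\bs)\|\le C\|u\|_{C^4}/|\bs|$, because the factor $1/|\bs|^2$ amplifies remainders. I would therefore split the cells into a bounded family of inner cells touching the origin (where $|\bs|\lesssim h$) and outer cells with $|\bs|\gtrsim h$. On an outer cell, combining the interpolation bound with $\|\nabla^2F\|_{L^\infty(\tau)}\le C\|u\|_{C^4}/|\bs_\tau|$ and $W(\bs)\le C|\bs|\rho_\delta(|\bs|)$, together with the second-moment condition \eqref{kernel_2ndmoment}, produces a total contribution of order $h^2\|u\|_{C^4}$. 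On the $O(1)$ inner cells one has $|F|,|\hI_hF|\le Ch\|u\|_{C^2}$ (from $F(\bo)=0$ and Lipschitz control of $F$ within a single orthant) while $W(\bs)\lesssim h\,\rho_\delta(|\bs|)$, so their contribution is of strictly higher order in $h$. Summing the two regimes delivers the required $O(h^2\|u\|_{C^4})$ estimate with a constant independent of $\delta$ and $h$.
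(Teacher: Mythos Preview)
The paper does not prove this lemma---it is stated without proof with a pointer to \cite{DuTaTiYa18,TiJuDu17}---so there is no in-paper argument to compare against. Your overall plan (reduce to a weighted interpolation error for $F$, then split into inner and outer cells) is the strategy of those references, but the Hessian bound you quote is too coarse to deliver a $\delta$-independent constant. Combining $\|\nabla^2 F\|_{L^\infty(\tau)}\le C\|u\|_{C^4}/|\bs_\tau|$ with $W(\bs)\le C|\bs|\rho_\delta(|\bs|)$ and integrating produces $Ch^2\|u\|_{C^4}\int_{B_\delta(\bo)}\rho_\delta(|\bs|)\,\d\bs$, i.e.\ the \emph{zeroth} moment of $\rho_\delta$, not the second moment. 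Condition \eqref{kernel_2ndmoment} does not control this quantity: for the non-integrable kernels $\alpha\in[2,4)$ in \eqref{frac_kernel} one has $\int\rho_\delta=\infty$, and even in the integrable case $\int\rho_\delta\sim\delta^{-2}$, so the resulting bound blows up as $\delta\to0$. The bound $\|\nabla^2 F\|\lesssim|\bs|^{-1}$ is in fact sharp in $d\ge2$, because the leading piece $\bs^T\nabla^2u(\bx_\bi)\bs\,\|\bs\|_1/|\bs|^2$ is degree-one homogeneous but not linear.

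The missing ingredient is that this leading quadratic part contributes \emph{nothing} to the consistency error. Since each mesh cell lies in a single closed orthant, $\hI_h$ reproduces $\|\cdot\|_1$ exactly, whence $\sum_{\bj}\|\bs_\bj\|_1\beta_\delta(\bs_\bj)=\tfrac12\int_{B_\delta(\bo)}|\bs|^2\rho_\delta(|\bs|)\,\d\bs=d$; together with the reflection and permutation symmetry of the node set this gives $\hL_{\delta,h}q(\bx_\bi)=\hL_\delta q(\bx_\bi)=\Delta q(\bx_\bi)$ for every quadratic $q$. Consequently only the fourth-order Taylor remainder $F_{\mathrm{rem}}$ enters, and there the extra factor $|\bs|^2$ beats the denominator to give $\|\nabla^2 F_{\mathrm{rem}}(\bs)\|\le C|\bs|\,\|u\|_{C^4}$. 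Now the outer-cell interpolation error is $Ch^2|\bs|\|u\|_{C^4}$, and multiplying by $W$ and integrating yields precisely $Ch^2\|u\|_{C^4}\int_{B_\delta(\bo)}|\bs|^2\rho_\delta(|\bs|)\,\d\bs$, which \eqref{kernel_2ndmoment} bounds uniformly in $\delta$. Insert this exactness-on-quadratics step before your inner/outer split and the rest of your outline goes through.
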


By ordering the nodes in the lexicographical order,
we can obtain the nonlocal stiffness matrix, denoted by $D_h\in\R^{dN\times dN}$, associated with $\hL_{\delta,h}$.
It is obvious that $D_h$ is symmetric, negative semi-definite,
and weakly diagonally dominant with all negative diagonal entries.
The space-discrete scheme of \eqref{nonlocalAC} is to find a vector-valued function $U:[0,T]\to\R^{dN}$ such that
\begin{subequations}
\label{nonlocalAC_diff}
\begin{align}
\daoshu{U}{t} & =\eps^2D_hU+U-U^{.3},\quad t\in(0,T],\label{nonlocalAC_diff1}\\
U(0) & =U_0,
\end{align}
\end{subequations}
where $U^{.3}=(U_1^3,U_2^3,\dots,U_{dN}^3)^T$ and
$U_0\in\R^{dN}$ is given by the initial data.
For the sake of the stability of the time-stepping schemes developed later,
we introduce a {\em stabilizing parameter} $\kappa>0$ and define
\begin{equation}
\label{def_operators}
L_h:=-\eps^2D_h+\kappa I_{dN},\qquad f(U):=(\kappa+1)U-U^{.3},
\end{equation}
where $I_{dN}$ is the $dN\times dN$ identity matrix,
so $L_h$ is symmetric, positive definite,
and strictly diagonally dominant with all positive diagonal entries.
Then, the ODE system \eqref{nonlocalAC_diff1} could be written as
$$\daoshu{U}{t}+L_hU=f(U),$$
whose solution satisfies
\begin{equation}
\label{nonlocalAC_diff_solution}
U(t+\tau)=\e^{-L_h\tau}U(t)+\int_0^\tau\e^{-L_h(\tau-s)}f(U(t+s))\,\d s,\quad\forall\,t\ge0,\ \tau>0.
\end{equation}
In the above we have used a property of the differentiation of matrix exponentials (see Lemma \ref{lem_matfun} (5)),
and we list below some other properties of matrix functions (see \cite{Matfun08}) useful to the analysis later.

\begin{lemma}[see \cite{Matfun08}]
\label{lem_matfun}
Let $\phi$ be defined on the spectrum of $A\in\C^{m\times m}$, that is, the values
$$\phi^{(j)}(\lambda_i),\qquad 0\le j\le n_i-1,\ 1\le i\le m$$
exist, where $\{\lambda_i\}_{i=1}^m$ are the eigenvalues of $A$,
and $n_i$ is the order of the largest Jordan block where $\lambda_i$ appears.
Then

{\rm(1)} $\phi(A)$ commutes with $A$;

{\rm(2)} $\phi(A^T)=\phi(A)^T$;

{\rm(3)} the eigenvalues of $\phi(A)$ are $\{\phi(\lambda_i):1\le i\le m\}$;

{\rm(4)} $\phi(P^{-1}AP)=P^{-1}\phi(A)P$ for any nonsingular matrix $P\in\C^{m\times m}$;

{\rm(5)} $\daoshu{}{s}(\e^{As})=A\e^{As}=\e^{As}A$ for any $s\in\R$.
\end{lemma}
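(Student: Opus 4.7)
The plan is to work from the standard polynomial definition of $\phi(A)$: if $\phi$ is defined on the spectrum of $A$, then there is a unique Hermite interpolating polynomial $p$ of degree less than $\sum_i n_i$ such that $p^{(j)}(\lambda_i)=\phi^{(j)}(\lambda_i)$ for all $0\le j\le n_i-1$ and $1\le i\le m$, and we set $\phi(A):=p(A)$. With this definition, properties (1) and (4) are essentially immediate, since for any polynomial $p$ we have $p(A)A=Ap(A)$ and $p(P^{-1}AP)=P^{-1}p(A)P$ term by term. The only thing to check for (4) is that the interpolating polynomial used for $P^{-1}AP$ is the same as the one used for $A$, which follows because $A$ and $P^{-1}AP$ have identical spectrum and identical Jordan structure.

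For (3), I would pass to the Jordan canonical form $A=QJQ^{-1}$ with $J=\mathrm{diag}(J_1,\dots,J_k)$ a block diagonal matrix of Jordan blocks. Using (4) with $P=Q$, I reduce to computing $\phi(J_\ell)$ for a single Jordan block $J_\ell$ with eigenvalue $\lambda_i$. Writing $J_\ell=\lambda_i I+N$ where $N$ is the nilpotent shift and expanding $p(J_\ell)$ in powers of $N$ yields an upper triangular matrix whose diagonal entries are all $\phi(\lambda_i)$; the full spectrum of $\phi(A)$ is then $\{\phi(\lambda_i)\}_{i=1}^m$.

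For (2), the point is that $A^T$ has the same eigenvalues with the same algebraic and Jordan-block structure as $A$ (since $A^T$ is similar to $A$), so the defining Hermite interpolation conditions are identical; hence $\phi(A^T)=p(A^T)$ with the same polynomial $p$. Transposition commutes with sums and scalar multiplications, and $(A^k)^T=(A^T)^k$, so $p(A^T)=p(A)^T$. Finally, for (5), I would use the absolutely convergent power series $\e^{As}=\sum_{k=0}^\infty (As)^k/k!$, differentiate term by term (justified by uniform convergence on compact $s$-intervals), and factor out $A$ from either the left or the right to obtain both $A\e^{As}$ and $\e^{As}A$; these agree by (1).

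The main subtlety is not any single calculation but rather establishing that the Hermite-interpolation definition is actually \emph{well-defined} and coincides with other definitions (Jordan-form and, when applicable, Cauchy contour or power series), so that properties of polynomial functions transfer cleanly to $\phi$. Once that equivalence is in hand (which is the foundational content of \cite{Matfun08}), each of (1)--(5) reduces to an elementary manipulation as sketched above.
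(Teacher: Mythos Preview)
Your proposal is correct and follows the standard route via the Hermite interpolating polynomial, Jordan form, and power series, which is exactly the approach in Higham's book \cite{Matfun08}. Note, however, that the paper itself does not supply a proof of this lemma: it is stated with the citation ``see \cite{Matfun08}'' and used as a black box, so there is no in-paper argument to compare against. Your sketch would serve as a self-contained justification if one were desired.
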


\subsection{Exponential time differencing schemes for time-stepping}

Given a positive integer $K_t$, we divide the time interval by $\{t_n=n\tau:0\le n\le K_t\}$ with a uniform time step $\tau=T/K_t$.
Setting $t=t_n$ in \eqref{nonlocalAC_diff_solution} gives us
\begin{equation}
\label{nonlocalAC_diff_solution2}
U(t_{n+1})=\e^{-L_h\tau}U(t_n)+\int_0^\tau\e^{-L_h(\tau-s)}f(U(t_n+s))\,\d s.
\end{equation}
The first order ETD (ETD1) scheme comes from
approximating $f(U(t_n+s))$ by $f(U(t_n))$ in $s\in[0,\tau]$ and calculating the produced integral exactly \cite{CoMa02}.
The ETD1 scheme of \eqref{nonlocalAC} reads: for $n=0,1,\cdots,K_t-1$,
\begin{equation}
\label{nonlocalAC_ETD1}
U^{n+1}=\e^{-L_h\tau}U^n+\int_0^\tau\e^{-L_h(\tau-s)}f(U^n)\,\d s,
\end{equation}
that is,
\begin{equation}
\label{nonlocalAC_ETD1var}
U^{n+1}=\phi_0(L_h\tau)U^n+\tau\phi_1(L_h\tau)f(U^n),
\end{equation}
where
$$\phi_0(a):=\e^{-a},\quad\phi_1(a):=\frac{1-\e^{-a}}{a},\quad a\not=0.$$
The second order ETD Runge-Kutta (ETDRK2) scheme is obtained by
approximating $f(U(t_n+s))$ by a linear interpolation based on $f(U(t_n))$ and $f(\widetilde{U}^{n+1})$,
where $\widetilde{U}^{n+1}$ is an approximation of $U(t_{n+1})$.
The ETDRK2 scheme of \eqref{nonlocalAC} takes the form: for $n=0,1,\cdots,K_t-1$,
\begin{subequations}
\label{nonlocalAC_ETDRK2}
\begin{align}
\widetilde{U}^{n+1} & =\e^{-L_h\tau}U^n+\int_0^\tau\e^{-L_h(\tau-s)}f(U^n)\,\d s,\label{nonlocalAC_ETDRK2eq1}\\
U^{n+1} & =\e^{-L_h\tau}U^n+\int_0^\tau\e^{-L_h(\tau-s)}\Big[\Big(1-\frac{s}{\tau}\Big)f(U^n)+\frac{s}{\tau}f(\widetilde{U}^{n+1})\Big]\,\d s,
\label{nonlocalAC_ETDRK2eq2}
\end{align}
\end{subequations}
or equivalently,
\begin{subequations}
\label{nonlocalAC_ETDRK2var}
\begin{align}
\widetilde{U}^{n+1} & =\phi_0(L_h\tau)U^n+\tau\phi_1(L_h\tau)f(U^n),\label{nonlocalAC_ETDRK2var1}\\
U^{n+1} & =\widetilde{U}^{n+1}+\tau\phi_2(L_h\tau)(f(\widetilde{U}^{n+1})-f(U^n)),\label{nonlocalAC_ETDRK2var2}
\end{align}
\end{subequations}
where
$$\phi_2(a):=\frac{\e^{-a}-1+a}{a^2},\quad a\not=0.$$
We know that $\phi_0(a)$, $\phi_1(a)$, and $\phi_2(a)$ are all positive when $a>0$.

\subsection{Efficient implementations of the ETD schemes}

We close this section by giving a brief illustration on the practical implementation
of the proposed schemes \eqref{nonlocalAC_ETD1var} and \eqref{nonlocalAC_ETDRK2var}.
Using the 2D case as the example,
we first give the explicit formula of the discrete operator $\hL_{\delta,h}$ (as illustrated in \cite{DuTaTiYa18})
and then discuss efficient implementation of the actions of the matrix exponentials.


Let $u_{i,j}$ be the nodal value of the numerical solution at the mesh point $(x_i,y_j)\in\Omega_h\subset\R^2$
 and $r=[\delta/h]+1$ be the smallest integer larger than $\delta/h$.
Then, we have
\begin{equation}
\hL_{\delta,h}u_{i,j}=\sum_{p=0}^r\sum_{q=0}^rc_{p,q}(u_{i+p,j+q}+u_{i-p,j+q}+u_{i+p,j-q}+u_{i-p,j-q}-4u_{i,j}),
\end{equation}
where $c_{0,0}=0$ and
\begin{equation}
\label{coef_cpq}
c_{p,q}=\frac{p+q}{(p^2+q^2)h}\iint_{B_\delta^+}\psi_{p,q}(x,y)\rho_\delta(\sqrt{x^2+y^2})\frac{x^2+y^2}{x+y}\,\d x\d y,
\end{equation}
with $\psi_{p,q}$ denoting the bilinear basis function located at the point $(ph,qh)$
and $B_\delta^+$ the first quadrant of the disc centered at the origin with radius $\delta$.
Note that $c_{p,q}=c_{q,p}$ for any $p$ and $q$.
One can apply efficient quadrature rules on the double integrals in \eqref{coef_cpq}.


We represent $U^n\in\R^{N\times N}$ in the matrix form with entries $u_{i,j}^n$
and define the operator $\hL_h=\kappa\hI-\eps^2\hL_{\delta,h}$ whose matrix form is given by $L_h$ defined in \eqref{def_operators},
where $\hI$ is the identity mapping.
The key process of calculating $U^{n+1}$ from the scheme \eqref{nonlocalAC_ETD1var} or \eqref{nonlocalAC_ETDRK2var}
is the efficient implementation of the actions of the operator exponentials $\phi_\gamma(\hL_h\tau)$, $\gamma=0,1,2$.
Since $\hL_h$ comes from the discretization of $\hL_\delta$ with the periodic boundary condition,
the exponentials $\phi_\gamma(\hL_h\tau)$ can be implemented by the 2D discrete Fourier transform (DFT).
More precisely, if we denote by $\mathcal{F}$ the 2D DFT operator,
then, for any $V=(V_{k,l})\in\C^{N\times N}$,
the action of the operator $\widehat{\hL}_h:=\mathcal{F}\hL_h\mathcal{F}^{-1}$ can be implemented via
$$(\widehat{\hL}_hV)_{k,l}=\lambda_{k,l}V_{k,l},\quad 1\le k,l\le N,$$
where $\lambda_{k,l}$'s, the eigenvalues of $\hL_h$, are given by
\begin{equation*}
\lambda_{k,l}=\kappa+4\eps^2\sum_{p=0}^r\sum_{q=0}^rc_{p,q}\Big(1-\cos\frac{2\pi(k-1)p}{N}\cos\frac{2\pi(l-1)q}{N}\Big),
\quad 1\le k,l\le N.
\end{equation*}
According to Lemma \ref{lem_matfun} (4), we have
$$\phi_\gamma(\hL_h\tau)=\mathcal{F}^{-1}\phi_\gamma(\widehat{\hL}_h\tau)\mathcal{F},
\quad(\phi_\gamma(\widehat{\hL}_h\tau)V)_{k,l}=\phi_\gamma(\lambda_{k,l}\tau)V_{k,l},\quad\gamma=0,1,2.$$
The actions of $\mathcal{F}$ and $\mathcal{F}^{-1}$ can be implemented
by the 2D fast Fourier transform (FFT) and its inverse transform, respectively.
Such implementation can be naturally generalized to higher-dimensional
spaces and the computational complexity is thus $\mathcal{O}(N^d\log N)$ per time step.

%

\section{Discrete maximum principle}

Denote by $\|\cdot\|_\infty$ the standard vector or matrix $\infty$-norm,
and by $\|\cdot\|_2$ the standard vector or matrix $2$-norm.
The following lemma is a special case of Theorem 2 in \cite{Lazer71}.

\begin{lemma}
\label{lem_nddsystem}
Let $A=(a_{ij})\in\R^{m\times m}$ with $a_{ii}<0$, $1\le i\le m$,
and there exists $\kappa>0$ such that
$$|a_{ii}|\ge\sum_{j=1}^m|a_{ij}|+\kappa,\quad1\le i\le m,$$
then the nontrivial solution $\theta=\theta(t)$ to the linear differential system
\begin{equation}
\label{nddsystem}
\daoshu{\theta}{t}=A\theta,\quad t>0
\end{equation}
satisfies
$$\|\theta(t_2)\|_\infty\le\e^{-\kappa(t_2-t_1)}\|\theta(t_1)\|_\infty,\quad\forall\,t_2\ge t_1\ge0.$$
\end{lemma}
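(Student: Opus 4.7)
The plan is to reduce the lemma to a single scalar differential inequality for $M(t) := \|\theta(t)\|_\infty$, namely
\[
D^+ M(t) \le -\kappa M(t),
\]
where $D^+$ denotes the upper right Dini derivative, and then integrate via the standard Dini comparison principle, i.e.\ observe that $\frac{\d}{\d t}(\e^{\kappa t} M(t)) \le 0$ in the Dini sense, to obtain $M(t_2) \le \e^{-\kappa(t_2 - t_1)} M(t_1)$ for all $t_2 \ge t_1 \ge 0$.

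To derive the differential inequality, first note that nontriviality of $\theta$ combined with uniqueness for the linear system \eqref{nddsystem} forces $M(t) > 0$ for every $t \ge 0$, so the sign of each maximizing component is well defined. Fix $t$ and select any index $i^\ast = i^\ast(t)$ with $|\theta_{i^\ast}(t)| = M(t)$. Using \eqref{nddsystem} and the coordinatewise bound $|\theta_j(t)| \le M(t)$, I would estimate
\[
\mathrm{sgn}(\theta_{i^\ast}(t))\,\dot\theta_{i^\ast}(t)
= a_{i^\ast i^\ast}|\theta_{i^\ast}(t)| + \mathrm{sgn}(\theta_{i^\ast}(t))\sum_{j\ne i^\ast} a_{i^\ast j}\theta_j(t)
\le \Bigl(a_{i^\ast i^\ast} + \sum_{j\ne i^\ast} |a_{i^\ast j}|\Bigr) M(t).
\]
Because $a_{i^\ast i^\ast} < 0$, the bracketed quantity equals $-\bigl(|a_{i^\ast i^\ast}| - \sum_{j\ne i^\ast}|a_{i^\ast j}|\bigr) \le -\kappa$ by the diagonal dominance gap hypothesis. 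A standard selection argument for $M(t) = \max_j |\theta_j(t)|$ (extract a subsequence $h_n \to 0^+$ along which the argmax at $t+h_n$ is some constant index $j$, which is necessarily also an argmax at $t$) then promotes this pointwise bound to $D^+ M(t) \le -\kappa M(t)$.

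The main obstacle is precisely this promotion step: since the argmax index $i^\ast(t)$ can jump in $t$, $M$ is merely locally Lipschitz rather than $C^1$, so one cannot simply differentiate componentwise. The Dini-derivative selection argument sketched above is the cleanest resolution; an equivalent alternative is to smooth each $|\theta_i|$ by $(\theta_i^2+\varepsilon^2)^{1/2}$, apply ordinary calculus to $M_\varepsilon(t) := \max_i(\theta_i^2(t)+\varepsilon^2)^{1/2}$, and then pass to the limit $\varepsilon \to 0^+$. Everything else is routine bookkeeping.
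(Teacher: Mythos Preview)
Your argument is correct. The paper itself does not prove this lemma at all; it simply records it as ``a special case of Theorem~2 in \cite{Lazer71}'' and moves on. What you supply is therefore strictly more than what the paper provides: a self-contained proof via the upper Dini derivative of $M(t)=\|\theta(t)\|_\infty$, followed by integration of $D^+\bigl(\e^{\kappa t}M(t)\bigr)\le 0$. Each step is sound: nontriviality plus invertibility of $\e^{At}$ gives $M(t)>0$; the componentwise estimate at a maximizing index yields the $-\kappa M(t)$ bound; and the subsequence/argmax selection argument correctly promotes this to a bound on $D^+M$. This is in fact the classical route to diagonally-dominant decay estimates of Lazer type, so your proof is in the same spirit as the cited reference even though the paper omits the details.

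One minor remark: the hypothesis as printed, $|a_{ii}|\ge\sum_{j=1}^m|a_{ij}|+\kappa$, literally includes the diagonal term in the sum and is therefore impossible for $\kappa>0$. You have (correctly) read it as $\sum_{j\ne i}|a_{ij}|$, which is also what the application to $A=-L_h$ in Lemma~\ref{lem_invnorm} requires.
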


The following result is a key ingredient to prove the discrete maximum principle.

\begin{lemma}
\label{lem_invnorm}
For any $\kappa>0$ and $\tau>0$, we always have $\|\e^{-L_h\tau}\|_\infty\le\e^{-\kappa\tau}$.
\end{lemma}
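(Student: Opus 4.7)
The plan is to reduce the claim to a direct application of Lemma \ref{lem_nddsystem}. The idea is that for any fixed vector $v\in\R^{dN}$, the function $\theta(t):=\e^{-L_ht}v$ solves the linear ODE $d\theta/dt=-L_h\theta$, and if the matrix $A:=-L_h$ satisfies the weak diagonal dominance hypothesis of Lemma \ref{lem_nddsystem} with stabilizing constant $\kappa$, then $\|\e^{-L_h\tau}v\|_\infty\le \e^{-\kappa\tau}\|v\|_\infty$ for every $v$, which gives the bound on the induced $\infty$-norm.

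The bulk of the argument therefore reduces to verifying the sign and row-sum structure of $A=\eps^2 D_h-\kappa I_{dN}$. First I would recall that the off-diagonal entries of $D_h$ coming from \eqref{nonlocalopt_discrete} are of the form $\frac{\|\bs_\bj\|_1\beta_\delta(\bs_\bj)}{|\bs_\bj|^2}$ (with periodic wrap-around), and are non-negative because $\beta_\delta(\bs_\bj)\ge 0$ by \eqref{nonlocalopt_discoef_beta} (since $\rho_\delta\ge 0$ and the piecewise multilinear basis $\psi_\bj\ge 0$). The diagonal entries of $D_h$ are the negatives of the row sums of the off-diagonal entries, so they are strictly negative and each row of $D_h$ sums to zero (this follows because $\hL_{\delta,h}$ annihilates constants, as one sees from the discrete stencil). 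Consequently $A$ has negative diagonal entries $a_{ii}=\eps^2(D_h)_{ii}-\kappa$, non-negative off-diagonal entries, and
\begin{equation*}
|a_{ii}|=\eps^2|(D_h)_{ii}|+\kappa=\sum_{j\ne i}\eps^2(D_h)_{ij}+\kappa=\sum_{j\ne i}|a_{ij}|+\kappa,
\end{equation*}
which is exactly the hypothesis of Lemma \ref{lem_nddsystem} (the stated inequality there with the full sum $\sum_{j=1}^m$ must be a misprint for $\sum_{j\ne i}$, since otherwise the condition is vacuous).

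Having verified the structural hypothesis, I would then apply Lemma \ref{lem_nddsystem} to $\theta(t)=\e^{-L_ht}v$, taking $t_1=0$ and $t_2=\tau$, to obtain $\|\e^{-L_h\tau}v\|_\infty\le \e^{-\kappa\tau}\|v\|_\infty$. Taking the supremum over $v\ne 0$ yields $\|\e^{-L_h\tau}\|_\infty\le \e^{-\kappa\tau}$.

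No real obstacle is anticipated: the only delicate step is the row-sum-zero property of $D_h$, which needs a brief justification from the explicit stencil \eqref{nonlocalopt_discrete} and the periodic identification of indices; everything else is bookkeeping on signs.
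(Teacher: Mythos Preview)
Your proposal is correct and follows essentially the same approach as the paper: set $A=-L_h$, verify the diagonal dominance hypothesis of Lemma~\ref{lem_nddsystem}, apply it to $\theta(t)=\e^{-L_ht}v$, and take the supremum over $v$. The only difference is that the paper already recorded, right after \eqref{def_operators}, that $L_h$ is strictly diagonally dominant with positive diagonal entries and simply invokes this fact, whereas you spell out the row-sum-zero structure of $D_h$ explicitly; your observation about the misprint in the summation index of Lemma~\ref{lem_nddsystem} is also correct.
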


\begin{proof}
Since $L_h$ is strictly diagonally dominant with all positive diagonal entries,
the matrix $A:=-L_h$ satisfies the conditions of Lemma \ref{lem_nddsystem} with $\kappa>0$.
For any nonzero $\theta_0\in\R^{dN}$,
we know that the solution to the linear differential system \eqref{nddsystem} with the initial value $\theta(0)=\theta_0$
is given by $\theta(t)=\e^{-L_ht}\theta_0$ and,
by using Lemma \ref{lem_nddsystem}, satisfies
$$\|\e^{-L_h\tau}\theta_0\|_\infty=\|\theta(\tau)\|_\infty\le\e^{-\kappa\tau}\|\theta_0\|_\infty,\quad\tau>0.$$
Therefore, the result follows from the arbitrariness of $\theta_0$.
\end{proof}

\begin{remark}
Although our deduction above is restricted to the case of periodic boundary condition,
the result of Lemma \ref{lem_invnorm} is also suitable for the case of the Dirichlet boundary condition,
since the corresponding nonlocal stiffness matrix $D_h$ is still weakly diagonally dominant with all negative diagonal entries.
The analysis results in this paper  could be obtained similarly for the Dirichlet boundary condition.
\end{remark}

Since the nonlinear mapping $f:\R^{dN}\to\R^{dN}$ defined in \eqref{def_operators}
is actually a set of $dN$ independent one-variable functions,
we just need to consider anyone of them.

\begin{lemma}
\label{lem_rhsnorm}
Define $f_0(\xi)=(\kappa+1)\xi-\xi^3$ for any $\xi\in\R$.
If $\kappa\ge 2$, then
$$|f_0(\xi)|\le\kappa,\quad\forall\,\xi\in[-1,1].$$
\end{lemma}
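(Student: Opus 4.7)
The plan is to exploit the fact that $f_0$ is an odd polynomial, so it suffices to bound $f_0(\xi)$ on $[0,1]$ and invoke symmetry to cover $[-1,0]$.

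First I would compute $f_0'(\xi)=(\kappa+1)-3\xi^2$ and locate its zeros: the interior critical points on the real line are $\xi=\pm\sqrt{(\kappa+1)/3}$. The crucial observation is that when $\kappa\ge 2$, we have $(\kappa+1)/3\ge 1$, so these critical points lie outside the open interval $(-1,1)$. Consequently, on $[0,1]$ we get $f_0'(\xi)\ge (\kappa+1)-3\ge 0$, meaning $f_0$ is nondecreasing there.

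Monotonicity then pins down the extrema on $[0,1]$ at the endpoints: $f_0(0)=0$ and $f_0(1)=(\kappa+1)-1=\kappa$. Hence $0\le f_0(\xi)\le\kappa$ for $\xi\in[0,1]$. Using $f_0(-\xi)=-f_0(\xi)$, the bound $-\kappa\le f_0(\xi)\le 0$ follows on $[-1,0]$, and combining both ranges gives $|f_0(\xi)|\le\kappa$ for all $\xi\in[-1,1]$.

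There is no serious obstacle here; the only thing to be careful about is to verify that the stabilization threshold $\kappa\ge 2$ is exactly what pushes the critical points of the cubic out of $[-1,1]$ so that monotonicity applies. In fact, one sees that $\kappa=2$ is sharp for this argument: at $\kappa=2$ the critical point sits exactly at $\xi=\pm 1$, and the endpoint value $f_0(1)=\kappa$ is attained.
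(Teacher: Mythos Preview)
Your proof is correct and follows essentially the same approach as the paper: both compute $f_0'(\xi)=(\kappa+1)-3\xi^2\ge\kappa-2\ge 0$ on $[-1,1]$ to obtain monotonicity, then read off the bound from the endpoint values $f_0(\pm 1)=\pm\kappa$. Your use of odd symmetry and the explicit location of the critical points are minor embellishments rather than a different argument.
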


\begin{proof}
Obviously, $f_0(-1)=-\kappa$ and $f_0(1)=-\kappa$.
For any $\xi\in[-1,1]$, if $\kappa\ge2$, we have
\begin{equation}
\label{lem_rhsnorm_f0p}
f_0'(\xi)=\kappa+1-3\xi^2\ge \kappa-2\ge0,
\end{equation}
which gives us the result.
\end{proof}

\begin{theorem}
\label{thm_discrete_max_ETD1}
Assume that the initial data satisfies $\|u_0\|_{L^\infty}\le1$.
Then, for any time step size $\tau>0$, the ETD1 scheme \eqref{nonlocalAC_ETD1} preserves the discrete maximum principle, i.e.,
$$\|U^n\|_\infty\le1,\quad\forall\,n\ge0,$$
provided the stabilizing parameter $\kappa\ge 2$.
\end{theorem}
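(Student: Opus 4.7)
The plan is to prove the bound $\|U^n\|_\infty \le 1$ by induction on $n$. The base case $n=0$ is immediate from the hypothesis $\|u_0\|_{L^\infty}\le 1$ together with the fact that $U^0$ is the vector of nodal values of $u_0$.

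For the inductive step, assuming $\|U^n\|_\infty \le 1$, I would start from the integral form \eqref{nonlocalAC_ETD1} of the ETD1 scheme and apply the triangle inequality in the $\infty$-norm under the integral. This yields
\begin{equation*}
\|U^{n+1}\|_\infty \le \|\e^{-L_h\tau}\|_\infty\,\|U^n\|_\infty + \int_0^\tau \|\e^{-L_h(\tau-s)}\|_\infty\,\|f(U^n)\|_\infty\,\d s.
\end{equation*}
The two factors to control are then exactly what Lemmas \ref{lem_invnorm} and \ref{lem_rhsnorm} provide: Lemma \ref{lem_invnorm} gives $\|\e^{-L_h t}\|_\infty \le \e^{-\kappa t}$ for every $t\ge 0$, and Lemma \ref{lem_rhsnorm}, applied componentwise to $f(U^n) = ((\kappa+1)U^n_i - (U^n_i)^3)_i$ using the induction hypothesis $|U^n_i|\le 1$ and the assumption $\kappa \ge 2$, gives $\|f(U^n)\|_\infty \le \kappa$.

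Substituting these bounds and evaluating the remaining scalar integral explicitly,
\begin{equation*}
\|U^{n+1}\|_\infty \le \e^{-\kappa\tau} + \kappa\int_0^\tau \e^{-\kappa(\tau-s)}\,\d s = \e^{-\kappa\tau} + (1-\e^{-\kappa\tau}) = 1,
\end{equation*}
closing the induction. Notice that the argument is unconditional in $\tau$: the exponential decay factor from Lemma \ref{lem_invnorm} exactly cancels the growth contributed by the nonlinear integrand because of the sharp bound $|f_0|\le\kappa$ on $[-1,1]$.

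There is essentially no main obstacle once the preceding lemmas are in hand; the entire content of the theorem is packaged into (i) the $\infty$-norm contractivity of $\e^{-L_h\tau}$, which required the stabilization shift of $\kappa I_{dN}$ to make $L_h$ strictly diagonally dominant, and (ii) the requirement $\kappa \ge 2$ forcing $f_0'\ge 0$ on $[-1,1]$ so that $f_0$ is monotone between its endpoint values $\pm\kappa$. The only mild subtlety worth flagging is the use of the triangle inequality under the integral, which is justified since $s\mapsto \e^{-L_h(\tau-s)}f(U^n)$ is continuous and the integral is a Bochner (equivalently, componentwise Riemann) integral in $\R^{dN}$.
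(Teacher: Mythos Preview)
Your proof is correct and follows essentially the same approach as the paper: induction on $n$, the triangle inequality applied to the integral form \eqref{nonlocalAC_ETD1}, Lemma~\ref{lem_invnorm} for $\|\e^{-L_h t}\|_\infty\le\e^{-\kappa t}$, Lemma~\ref{lem_rhsnorm} for $\|f(U^n)\|_\infty\le\kappa$, and the exact evaluation $\e^{-\kappa\tau}+\kappa\int_0^\tau\e^{-\kappa(\tau-s)}\,\d s=1$. The additional remarks on the Bochner integral and the role of the stabilization shift are accurate but not needed for the argument.
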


\begin{proof}
We prove this theorem by induction.
Obviously, it holds $\|U^0\|_\infty\le\|u_0\|_{L^\infty}\le1$.
Now assume that the result holds for $n=k$, i.e., $\|U^k\|_\infty\le1$.
Next we check this holds for $n=k+1$.
According to the scheme \eqref{nonlocalAC_ETD1}, we have
\begin{equation*}
\|U^{k+1}\|_\infty\le\|\e^{-L_h\tau}\|_\infty\|U^k\|_\infty+\int_0^\tau\|\e^{-L_h(\tau-s)}\|_\infty\,\d s\cdot\|f(U^k)\|_\infty.
\end{equation*}
It follows from Lemma \ref{lem_invnorm} that
\begin{equation}
\label{thm_discrete_max_ETD1pf}
\|\e^{-L_h\tau}\|_\infty\le\e^{-\kappa\tau},\qquad
\int_0^\tau\|\e^{-L_h(\tau-s)}\|_\infty\,\d s\le\int_0^\tau\e^{-\kappa(\tau-s)}\,\d s=\frac{1-\e^{-\kappa\tau}}{\kappa}.
\end{equation}
Using Lemma \ref{lem_rhsnorm} and $\|U^k\|_\infty\le1$, we have $\|f(U^k)\|_\infty\le\kappa$.
Consequently,
$$\|U^{k+1}\|_\infty\le\e^{-\kappa\tau}\cdot1+\frac{1-\e^{-\kappa\tau}}{\kappa}\cdot \kappa=1,$$
which completes the proof.
\end{proof}

\begin{theorem}
\label{thm_discrete_max_ETDRK2}
Assume that the initial data satisfies $\|u_0\|_{L^\infty}\le1$.
Then, for any time step size $\tau>0$, the ETDRK2 scheme \eqref{nonlocalAC_ETDRK2} preserves the discrete maximum principle, i.e.,
$$\|U^n\|_\infty\le1,\quad\forall\,n\ge0,$$
provided the stabilizing parameter $\kappa\ge 2$.
\end{theorem}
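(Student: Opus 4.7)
The plan is to run an induction on $n$ analogous to the ETD1 proof, with the extra observation that the intermediate stage $\widetilde{U}^{n+1}$ is structurally identical to one step of the ETD1 scheme, so its $\ell^\infty$ bound comes for free from Theorem \ref{thm_discrete_max_ETD1}.

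Assuming inductively that $\|U^n\|_\infty\le1$, I would first apply Theorem \ref{thm_discrete_max_ETD1} (or repeat the argument that produced it) to the update \eqref{nonlocalAC_ETDRK2eq1}, concluding that $\|\widetilde{U}^{n+1}\|_\infty\le1$. Once both $U^n$ and $\widetilde{U}^{n+1}$ live in the unit $\ell^\infty$ ball, Lemma \ref{lem_rhsnorm} (applied componentwise, using $\kappa\ge2$) gives the uniform bounds $\|f(U^n)\|_\infty\le\kappa$ and $\|f(\widetilde{U}^{n+1})\|_\infty\le\kappa$.

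Next I would take the $\ell^\infty$ norm in \eqref{nonlocalAC_ETDRK2eq2} and pull it inside the integral in the usual way. Because $(1-s/\tau)+(s/\tau)=1$, the two nonlinear contributions combine into a single factor bounded by $\kappa$:
\begin{equation*}
\|U^{n+1}\|_\infty\le\|\e^{-L_h\tau}\|_\infty\|U^n\|_\infty
+\int_0^\tau\|\e^{-L_h(\tau-s)}\|_\infty\Big[\Big(1-\frac{s}{\tau}\Big)\kappa+\frac{s}{\tau}\kappa\Big]\d s.
\end{equation*}
Invoking Lemma \ref{lem_invnorm} exactly as in \eqref{thm_discrete_max_ETD1pf}, the right-hand side collapses to $\e^{-\kappa\tau}+\kappa\cdot(1-\e^{-\kappa\tau})/\kappa=1$, closing the induction.

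There is really no serious obstacle here: the whole point is that the stabilized ETDRK2 stage \eqref{nonlocalAC_ETDRK2eq2} is, after taking absolute values, a convex combination (with weights $1-s/\tau$ and $s/\tau$) of the same nonlinear data that appears in ETD1, so no new cancellation or sharper estimate is required. The only place where one must be careful is in using Lemma \ref{lem_rhsnorm} for $f(\widetilde{U}^{n+1})$; this relies critically on having first established the maximum bound for the predictor stage, which is why the argument must be organized so that the ETD1 analysis is reused for $\widetilde{U}^{n+1}$ before attacking $U^{n+1}$.
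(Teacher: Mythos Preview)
Your proposal is correct and follows essentially the same approach as the paper: induction on $n$, reuse the ETD1 argument to bound the predictor $\widetilde{U}^{n+1}$, apply Lemma~\ref{lem_rhsnorm} to both nonlinear terms, exploit the convex combination $(1-s/\tau)+s/\tau=1$ inside the integral, and finish with the exponential estimate from Lemma~\ref{lem_invnorm} exactly as in \eqref{thm_discrete_max_ETD1pf}. The organization and key observations match the paper's proof step for step.
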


\begin{proof}
We again prove this by induction.
Obviously, it holds $\|U^0\|_\infty\le\|u_0\|_{L^\infty}\le1$.
Now assume that the result holds for $n=k$, i.e., $\|U^k\|_\infty\le1$.
Next we check this for $n=k+1$.
According to the formula \eqref{nonlocalAC_ETDRK2eq1} and the proof of Theorem \ref{thm_discrete_max_ETD1},
we have $\|\widetilde{U}^{k+1}\|_\infty\le1$.
According to the formula \eqref{nonlocalAC_ETDRK2eq2}, we have
\begin{equation}
\label{thm_discrete_max_ETDRK2pf}
\|U^{k+1}\|_\infty\le\|\e^{-L_h\tau}\|_\infty\|U^k\|_\infty
+\int_0^\tau\|\e^{-L_h(\tau-s)}\|_\infty\Big\|\Big(1-\frac{s}{\tau}\Big)f(U^k)+\frac{s}{\tau}f(\widetilde{U}^{k+1})\Big\|_\infty\,\d s.
\end{equation}
Since $\|U^k\|_\infty\le1$ and $\|\widetilde{U}^{k+1}\|_\infty\le1$, using Lemma \ref{lem_rhsnorm}, we have
$$\|f(U^k)\|_\infty\le \kappa,\quad\|f(\widetilde{U}^{k+1})\|_\infty\le \kappa,$$
and then, for $s\in[0,\tau]$,
$$\Big\|\Big(1-\frac{s}{\tau}\Big)f(U^k)+\frac{s}{\tau}f(\widetilde{U}^{k+1})\Big\|_\infty
\le\Big(1-\frac{s}{\tau}\Big)\kappa+\frac{s}{\tau}\kappa=\kappa.$$
Again, by using \eqref{thm_discrete_max_ETD1pf}, we obtain from \eqref{thm_discrete_max_ETDRK2pf} that
$$\|U^{k+1}\|_\infty\le\e^{-\kappa\tau}\cdot1+\frac{1-\e^{-S\tau}}{\kappa}\cdot \kappa=1,$$
which completes the proof.
\end{proof}

\begin{remark}
We will then always require $\kappa\geq 2$ for the proposed ETD1 and ETDRK2
schemes in the rest of the paper
so that they preserve the discrete maximum principle.
\end{remark}

\section{Error estimates and asymptotic compatibility}

We will analyze the two types of convergence behaviors of the numerical solution to
the ETD1 scheme \eqref{nonlocalAC_ETD1} and the ETDRK2 scheme \eqref{nonlocalAC_ETDRK2}, respectively.
First, for any fixed $\delta>0$, we prove that
the numerical solution converges to the exact solution of the NAC equation \eqref{nonlocalAC}
as the spatial mesh size $h$ and the time step size $\tau$ go to zero.
Second, we show that the numerical solution converges to the exact solution of the LAC equation \eqref{localAC}
as the horizon parameter $\delta$, the spatial size $h$, and the temporal step $\tau$ approach to zero.
The latter convergence behavior of the numerical solution is often called the {\em asymptotic compatibility} \cite{TiDu14}
in nonlocal modeling.

We first establish the $L^\infty$ error estimates for the numerical solution produced by the ETD1 scheme \eqref{nonlocalAC_ETD1}
for the NAC equation \eqref{nonlocalAC} with any fixed $\delta>0$.

\begin{theorem}
\label{thm_error_ETD1}
Given a fixed $\delta>0$.
Assume that the exact solution $u$ to the NAC equation \eqref{nonlocalAC}
belongs to $C^1([0,T];\Cp^4(\overline{\Omega}))$
and $\{U^n\}_{n=0}^{K_t}$ is generated by the ETD1 scheme \eqref{nonlocalAC_ETD1} with $U^0=I^hu_0$.
If $\|u_0\|_{L^\infty}\le1$, then we have
\begin{equation}
\label{erretd1}
\|U^n-I^hu(t_n)\|_\infty\le C\e^{t_n}(h^2+\tau),\quad t_n\le T
\end{equation}
for any $h>0$ and $\tau>0$,
where the constant $C>0$ depends on the $C^1([0,T];\Cp^4(\overline{\Omega}))$ norm of $u$,
but independent of $\delta$, $h$ and $\tau$.
\end{theorem}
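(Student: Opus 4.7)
The plan is to compare the ETD1 scheme against an exact Duhamel identity for the grid-restricted exact solution $I^h u(t_n)$, then close a discrete Gronwall recursion for the error $e^n := U^n - I^h u(t_n)$. The three ingredients I rely on are the spatial consistency bound of Lemma \ref{lem_discrete_consistency}, which lets me replace $\hL_\delta$ by $\hL_{\delta,h}$ up to an $O(h^2)$ residual; the matrix-exponential contraction $\|e^{-L_h s}\|_\infty \le e^{-\kappa s}$ from Lemma \ref{lem_invnorm}; and the discrete maximum principle of Theorem \ref{thm_discrete_max_ETD1} together with the continuum maximum principle for \eqref{nonlocalAC}, which confine both $U^n$ and $I^h u(t)$ to $[-1,1]$ throughout $[0,T]$.

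First I would write the restricted exact solution as a perturbed semi-discrete ODE,
$$\frac{d}{dt}(I^h u) + L_h (I^h u) = f(I^h u) + \eta_h(t), \qquad \|\eta_h(t)\|_\infty \le Ch^2,$$
where $\eta_h$ is the consistency residual from replacing $\hL_\delta$ by $\hL_{\delta,h}$, with the bound provided by Lemma \ref{lem_discrete_consistency} and the assumed $C^1([0,T];\Cp^4(\overline{\Omega}))$ regularity. Variation of parameters over $[t_n,t_{n+1}]$ gives an exact Duhamel identity for $I^h u(t_{n+1})$ of the same form as \eqref{nonlocalAC_ETD1}, but with $f(U^n)$ inside the integral replaced by $f(I^h u(t_n+s)) + \eta_h(t_n+s)$. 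Subtracting from \eqref{nonlocalAC_ETD1} and splitting
$$f(U^n)-f(I^h u(t_n+s)) = \bigl[f(U^n)-f(I^h u(t_n))\bigr] + \bigl[f(I^h u(t_n))-f(I^h u(t_n+s))\bigr],$$
the first bracket is a Lipschitz difference evaluated at the common time $t_n$ and the second is a smoothness-in-time remainder of size $O(s)$ controlled by $\|u_t\|_\infty$.

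Here the maximum principle is crucial. By Theorem \ref{thm_discrete_max_ETD1} and the continuum maximum principle for NAC, both $U^n$ and $I^h u(t_n+s)$ lie in $[-1,1]$, on which $|f_0'(\xi)| = |\kappa+1-3\xi^2| \le \kappa+1$; hence $f$ is globally Lipschitz with constant $\kappa+1$ on the relevant set. Combining the three parts with Lemma \ref{lem_invnorm} and evaluating the elementary time integrals $\int_0^\tau e^{-\kappa(\tau-s)}\,ds = (1-e^{-\kappa\tau})/\kappa$ and $\int_0^\tau s\,e^{-\kappa(\tau-s)}\,ds = O(\tau^2)$, I obtain
$$\|e^{n+1}\|_\infty \le \Bigl(e^{-\kappa\tau}+(\kappa+1)\tfrac{1-e^{-\kappa\tau}}{\kappa}\Bigr)\|e^n\|_\infty + C(\tau^2+h^2\tau).$$
Using $1-e^{-\kappa\tau}\le\kappa\tau$, the amplification coefficient is bounded by $1+\tau$, so the $\kappa$-dependence cancels; since $e^0 = 0$, a discrete Gronwall iteration with $(1+\tau)^n \le e^{t_n}$ produces the claimed bound $\|e^n\|_\infty \le Ce^{t_n}(\tau+h^2)$.

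The main obstacle will be the cubic nonlinearity: without the discrete maximum principle, $f$ is only locally Lipschitz, and any a posteriori bound on $\|U^n\|_\infty$ would grow in $n$, forcing either a time-step restriction or a constant that blows up in $T$. Theorem \ref{thm_discrete_max_ETD1} resolves this unconditionally by pinning the Lipschitz constant at $\kappa+1$ for every $n$. A secondary but essential algebraic point is the cancellation between the $(\kappa+1)/\kappa$ Lipschitz factor and the $(1-e^{-\kappa\tau})/\kappa$ integrated contraction, without which the amplification coefficient would retain a spurious $\kappa$-dependence.
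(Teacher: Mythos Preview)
Your proposal is correct and follows essentially the same route as the paper: both derive a Duhamel-type identity for the grid-restricted exact solution, subtract the ETD1 step, split the nonlinear difference into a Lipschitz part (bounded by $(\kappa+1)\|e^n\|_\infty$ via the discrete and continuum maximum principles) and a time-smoothness remainder of size $O(\tau)$, invoke Lemma~\ref{lem_discrete_consistency} for the $O(h^2)$ spatial residual and Lemma~\ref{lem_invnorm} for the exponential contraction, and then close the recursion $\|e^{n+1}\|_\infty\le(1+\tau)\|e^n\|_\infty+C\tau(h^2+\tau)$ by discrete Gronwall. The only cosmetic difference is that the paper packages the comparison via auxiliary evolution problems $W_1(s)$ and $w(\bx,s)$ on $[0,\tau]$ rather than writing the Duhamel formulas directly, but the estimates and the key cancellation $e^{-\kappa\tau}+(\kappa+1)(1-e^{-\kappa\tau})/\kappa=1+(1-e^{-\kappa\tau})/\kappa\le1+\tau$ are identical.
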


\begin{proof}
Recalling the construction of the ETD1 scheme \eqref{nonlocalAC_ETD1},
we observe that, for a known $U^n$, the solution $U^{n+1}$ is actually given by $U^{n+1}=W_1(\tau)$
with the function $W_1:[0,\tau]\to\R^{dN}$ determined by the following evolution equation
\begin{equation}
\label{nonlocalAC_ETD1_heat}
\begin{dcases}
\daoshu{W_1(s)}{s}=-\kappa W_1(s)+\eps^2D_hW_1(s)+f(U^n), & s\in(0,\tau),\\
W_1(0)=U^n.
\end{dcases}
\end{equation}
Then, for the NAC equation \eqref{nonlocalAC},
we can give a similar illustration as follows:
for given $u(\bx,t_n)$, the solution $u(\bx,t_{n+1})$ is determined by $u(\bx,t_{n+1})=w(\bx,\tau)$
with the function $w(\bx,s)$ satisfying
\begin{equation}
\label{nonlocalAC_heat}
\begin{dcases}
\piandao{w}{s}=-\kappa w+\eps^2\hL_\delta w+f(w), & \bx\in\Omega,\ s\in(0,\tau),\\
\text{$w(\cdot,s)$ is $\Omega$-periodic}, & s\in[0,\tau],\\
w(\bx,0)=u(\bx,t_n), & \bx\in\overline{\Omega}.
\end{dcases}
\end{equation}
Let $e_1(s)=W_1(s)-I^hw(s)$, where $I^h$ is the operator limiting a function on the mesh $\Omega_h$.
Then, the difference between \eqref{nonlocalAC_ETD1_heat} and \eqref{nonlocalAC_heat} yields
\begin{equation}
\label{nonlocalAC_ETD1_errorheat}
\begin{dcases}
\daoshu{e_1(s)}{s}=-L_he_1(s)+f(U^n)-f(I^hu(t_n))+R_{h\tau}^{(1)}(s), & s\in(0,\tau),\\
e_1(0)=U^n-I^hu(t_n)=:e_1^n,
\end{dcases}
\end{equation}
where $R_{h\tau}^{(1)}(s)$ is the truncated error, that is,
$$R_{h\tau}^{(1)}(s)=\eps^2(D_hI^hu(t_n+s)-I^h\hL_\delta u(t_n+s))+f(I^hu(t_n))-f(I^hu(t_n+s)).$$
Since $|f'(\xi)|\le \kappa+1$ for any $\xi\in[-1,1]$ and
both the exact and numerical solutions satisfy the maximum principles if $\kappa\ge 2$,
we have
\begin{equation}
\label{thm_error_ETD1pf1}
\|f(U^n)-f(I^hu(t_n))\|_\infty\le(\kappa+1)\|U^n-I^hu(t_n)\|_\infty=(\kappa+1)\|e_1^n\|_\infty.
\end{equation}
According to the consistency result given by Lemma \ref{lem_discrete_consistency},
if the exact solution $u$ is sufficiently smooth, at least $u\in C^1([0,T];\Cp^4(\overline{\Omega}))$,
then we have
\begin{equation}
\label{thm_error_ETD1pf2}
\|D_hI^hu(t)-I^h\hL_\delta u(t)\|_\infty\le C_1h^2,\quad\forall\,t\in(0,T],
\end{equation}
and for $s\in[0,\tau]$,
\begin{equation}
\|f(I^hu(t_n))-f(I^hu(t_n+s))\|_\infty\le(\kappa+1)\|I^h(u(t_n)-u(t_n+s))\|_\infty\le C_2(\kappa+1)\tau,
\end{equation}
where $C_1$ and $C_2$ depend on the $C^1([0,T];\Cp^4(\overline{\Omega}))$ norm of $u$, but independent of $\delta$, $h$ and $\tau$.
Thus, we obtain
\begin{equation}
\|R_{h\tau}^{(1)}(s)\|_\infty\le C(h^2+\tau),\quad\forall\,s\in[0,\tau],
\end{equation}
where $C=\max\{C_1\eps^2,C_2(S+1)\}$.
Integrating the ODE in \eqref{nonlocalAC_ETD1_errorheat} leads to
\begin{equation}
e_1(t)=\e^{-L_ht}e_1(0)+\int_0^t\e^{-L_h(t-s)}[f(U^n)-f(I^hu(t_n))+R_{h\tau}^{(1)}(s)]\,\d s,\quad t\in[0,\tau].
\end{equation}
Setting $t=\tau$ and using \eqref{thm_discrete_max_ETD1pf}, we have
\begin{align}
\|e_1^{n+1}\|_\infty
& \le\|\e^{-L_h\tau}\|_\infty\|e_1^n\|_\infty
+[(\kappa+1)\|e_1^n\|_\infty+C(h^2+\tau)]\int_0^\tau\|\e^{-L_h(\tau-s)}\|_\infty\,\d s\nonumber\\
& \le\e^{-\kappa\tau}\|e_1^n\|_\infty+\frac{1-\e^{-\kappa\tau}}{\kappa}[(\kappa+1)\|e_1^n\|_\infty+C(h^2+\tau)]\nonumber\\
& =\Big(1+\frac{1-\e^{-\kappa\tau}}{\kappa}\Big)\|e_1^n\|_\infty+\frac{1-\e^{-\kappa\tau}}{\kappa}C(h^2+\tau)\nonumber\\
& \le(1+\tau)\|e_1^n\|_\infty+C\tau(h^2+\tau),\label{thm_error_ETD1pf3}
\end{align}
where in the last step we have used the fact that $1-\e^{-s}\le s$ for any $s>0$.
Using the Gronwall's inequality, we obtain
\begin{align}
\|e_1^n\|_\infty
& \le(1+\tau)^n\|e_1^0\|_\infty+C\tau(h^2+\tau)\sum_{k=0}^{n-1}(1+\tau)^k\nonumber\\
& =(1+\tau)^n\|e_1^0\|_\infty+C[(1+\tau)^n-1](h^2+\tau)\nonumber\\
& \le\e^{n\tau}\|e_1^0\|_\infty+C\e^{n\tau}(h^2+\tau).
\end{align}
Therefore, we obtain \eqref{erretd1} since $e_1^0=0$ and $n\tau=t_n$.
\end{proof}

Now, we turn to the $L^\infty$ error estimates for the ETDRK2 scheme \eqref{nonlocalAC_ETDRK2}
with any fixed $\delta>0$.

\begin{theorem}
\label{thm_error_ETDRK2}
Given a fixed $\delta>0$.
Assume that the exact solution $u$ of the NAC equation \eqref{nonlocalAC}
belongs to $C^2([0,T];\Cp^4(\overline{\Omega}))$
and $\{U^n\}_{n=0}^{K_t}$ is generated by the ETDRK2 scheme \eqref{nonlocalAC_ETDRK2} with $U^0=I^hu_0$.
If $\|u_0\|_{L^\infty}\le1$, then we have
\begin{equation}
\label{erretd2}
\|U^n-I^hu(t_n)\|_\infty\le C\e^{t_n}(h^2+\tau^2),\quad t_n\le T
\end{equation}
for any $h>0$ and $1\geq \tau>0$,
where the constant $C>0$ depends on the $C^2([0,T];\Cp^4(\overline{\Omega}))$ norm of $u$,
but independent of $\delta$, $h$ and $\tau$.
\end{theorem}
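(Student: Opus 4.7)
The plan is to follow the same template as the proof of Theorem~\ref{thm_error_ETD1}, but applied to both stages of the ETDRK2 scheme in sequence. The essential new feature is the linear interpolation of the nonlinear term in time, which improves the local temporal truncation error from $\mathcal{O}(\tau)$ to $\mathcal{O}(\tau^2)$; this improvement is what ultimately forces the stronger hypothesis $u\in C^2([0,T];\Cp^4(\overline{\Omega}))$. Throughout I would write $\bar U(s):=I^hu(t_n+s)$ and introduce the stage errors $e^n:=U^n-I^hu(t_n)$ and $\tilde e^{n+1}:=\widetilde U^{n+1}-I^hu(t_{n+1})$.

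First I would derive an exact integral representation for $\bar U(\tau)$. Differentiating and invoking the NAC equation give $\bar U'(s)=-L_h\bar U(s)+f(\bar U(s))+R^{sp}(s)$, where $R^{sp}(s):=\eps^2\bigl(I^h\hL_\delta u-D_hI^hu\bigr)(t_n+s)$ satisfies $\|R^{sp}(s)\|_\infty\le Ch^2$ thanks to Lemma~\ref{lem_discrete_consistency}. Integrating this ODE yields an identity of the same shape as~\eqref{nonlocalAC_ETDRK2eq1} and~\eqref{nonlocalAC_ETDRK2eq2}. Subtracting from~\eqref{nonlocalAC_ETDRK2eq1}, invoking the discrete maximum principles from Theorems~\ref{thm_discrete_max_ETD1}--\ref{thm_discrete_max_ETDRK2} together with the Lipschitz bound $|f_0'|\le\kappa+1$ on $[-1,1]$ extracted from Lemma~\ref{lem_rhsnorm}, and repeating the calculation at the end of the proof of Theorem~\ref{thm_error_ETD1}, I would obtain the predictor estimate
\[
\|\tilde e^{n+1}\|_\infty\le(1+C\tau)\|e^n\|_\infty+C\tau(h^2+\tau).
\]

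Next I would subtract the integral representation for $\bar U(\tau)$ from the corrector~\eqref{nonlocalAC_ETDRK2eq2}. The resulting integrand decomposes naturally into (i) the two Lipschitz differences $(1-s/\tau)[f(U^n)-f(\bar U(0))]$ and $(s/\tau)[f(\widetilde U^{n+1})-f(\bar U(\tau))]$, which are controlled by $(\kappa+1)\|e^n\|_\infty$ and $(\kappa+1)\|\tilde e^{n+1}\|_\infty$ respectively; (ii) the temporal interpolation remainder $[(1-s/\tau)f(\bar U(0))+(s/\tau)f(\bar U(\tau))]-f(\bar U(s))$, which, since $f\circ\bar U$ inherits $C^2$-regularity in $s$ from the hypothesis, is pointwise bounded by $\tfrac12 s(\tau-s)M$ with $M$ depending on the $C^2([0,T];\Cp^4(\overline{\Omega}))$ norm of $u$; and (iii) the spatial remainder $R^{sp}(s)$. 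Part (i) hinges on the uniform bounds $\|U^n\|_\infty,\|\widetilde U^{n+1}\|_\infty\le 1$ furnished by the discrete maximum principle, while part (ii) is where the improved temporal regularity of $u$ is crucially exploited to reach second order in $\tau$.

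Combining these bounds with Lemma~\ref{lem_invnorm} (to control $\|\e^{-L_h(\tau-s)}\|_\infty$ by $\e^{-\kappa(\tau-s)}$) and the elementary estimates $\int_0^\tau\e^{-\kappa(\tau-s)}\,\d s\le\tau$, $\int_0^\tau\e^{-\kappa(\tau-s)}(s/\tau)\,\d s\le\tau/2$, and $\int_0^\tau\e^{-\kappa(\tau-s)}s(\tau-s)\,\d s\le\tau^3/6$, and then substituting the predictor estimate obtained in the preceding step, I expect to arrive at the one-step recursion
\[
\|e^{n+1}\|_\infty\le(1+C\tau)\|e^n\|_\infty+C\tau(h^2+\tau^2),
\]
after absorbing higher-order contributions using the assumption $\tau\le 1$. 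A discrete Gronwall argument identical in form to the one at the end of the proof of Theorem~\ref{thm_error_ETD1}, together with $e^0=0$ and $n\tau\le T$, then yields the claimed bound~\eqref{erretd2}. The main technical obstacle is controlling the cross-contribution from $\|\tilde e^{n+1}\|_\infty$: since the predictor is only first-order accurate in $\tau$, the second-order rate is preserved only through the weight $s/\tau$ in the corrector together with the exponential decay factor, which in tandem supply the extra $\mathcal{O}(\tau)$ smallness needed to keep the local truncation at $\mathcal{O}(\tau h^2+\tau^3)$.
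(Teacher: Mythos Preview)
Your proposal is correct and follows essentially the same approach as the paper: decompose the corrector error into the Lipschitz differences at the two stage values, the linear-interpolation remainder of $f\circ I^hu$ in time (controlled via $C^2$ regularity), and the spatial consistency error from Lemma~\ref{lem_discrete_consistency}; feed in the first-order predictor bound; and exploit the extra factor of $\tau$ coming from the weight $s/\tau$ (equivalently, from $\int_0^\tau s\,\e^{-\kappa(\tau-s)}\,\d s\le\tau^2/2$) to keep the local error at $\mathcal{O}(\tau(h^2+\tau^2))$ before Gronwall. The only cosmetic difference is that the paper substitutes the predictor bound \emph{before} integrating and tracks the growth factor precisely as $1+\tau+\tau^2/2\le\e^\tau$, which is what produces the clean $\e^{t_n}$ (rather than $\e^{Ct_n}$) in~\eqref{erretd2}; you should sharpen your one-step recursion accordingly if you want to match the stated constant.
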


\begin{proof}
For a known $U^n$,
the solution $U^{n+1}$ to the ETDRK2 scheme \eqref{nonlocalAC_ETDRK2} is actually given by $U^{n+1}=W_2(\tau)$
with the function $W_2:[0,\tau]\to\R^{dN}$ determined by the evolution equation
\begin{equation}
\label{nonlocalAC_ETDRK2_heat}
\begin{dcases}
\daoshu{W_2(s)}{s}=-\kappa W_2(s)+\eps^2D_hW_2(s)
+\Big(1-\frac{s}{\tau}\Big)f(U^n)+\frac{s}{\tau}f(\widetilde{U}^{n+1}), & s\in(0,\tau),\\
W_2(0)=U^n,
\end{dcases}
\end{equation}
where $\widetilde{U}^{n+1}$, defined by \eqref{nonlocalAC_ETDRK2eq1},
is given by $\widetilde{U}^{n+1}=W_1(\tau)$ with $W_1(s)$ satisfying \eqref{nonlocalAC_ETD1_heat}.
Let $e_2(s)=W_2(s)-I^hw(s)$.
The difference between \eqref{nonlocalAC_ETDRK2_heat} and \eqref{nonlocalAC_heat} leads to
\begin{equation}
\label{nonlocalAC_ETDRK2_errorheat}
\begin{dcases}
\daoshu{e_2(s)}{s}=-L_he_2(s)
+\Big(1-\frac{s}{\tau}\Big)[f(U^n)-f(I^hu(t_n))]\\
\qquad\qquad\quad +\frac{s}{\tau}[f(\widetilde{U}^{n+1})-f(I^hu(t_{n+1}))]+R_{h\tau}^{(2)}(s), & s\in(0,\tau),\\
e_2(0)=U^n-I^hu(t_n)=:e_2^n,
\end{dcases}
\end{equation}
where $R_{h\tau}^{(2)}(s)$ is the truncated error given by
\begin{align*}
R_{h\tau}^{(2)}(s) & =\eps^2(D_hI^hu(t_n+s)-I^h\hL_\delta u(t_n+s))\\
& \qquad +\Big[\Big(1-\frac{s}{\tau}\Big)f(I^hu(t_n))+\frac{s}{\tau}f(I^hu(t_{n+1}))-f(I^hu(t_n+s))\Big].
\end{align*}
According to error estimates for the linear interpolation,
we have, for $s\in[0,\tau]$, that
$$\Big\|\Big(1-\frac{s}{\tau}\Big)f(I^hu(t_n))+\frac{s}{\tau}f(I^hu(t_{n+1}))-f(I^hu(t_n+s))\Big\|\le C_3\tau^2,$$
where $C_3$ depends on the $C^2([0,T];\Cp^4(\overline{\Omega}))$ norm of $u$, but independent of $\delta$, $h$ and $\tau$.
Thus, combining with \eqref{thm_error_ETD1pf2}, we obtain
\begin{equation}
\|R_{h\tau}^{(2)}(s)\|_\infty\le C_4(h^2+\tau^2),\quad\forall\,s\in[0,\tau],
\end{equation}
where $C_4=\max\{C_1\eps^2,C_3\}$.
According to \eqref{thm_error_ETD1pf3} in the proof for the ETD1 scheme, we have
$$\|\widetilde{U}^{n+1}-I^hu(t_{n+1})\|_\infty
\le(1+\tau)\|U^n-I^hu(t_n)\|_\infty+C_5\tau(h^2+\tau),$$
where $C_5$ depends on the $C^1([0,T];\Cp^4(\overline{\Omega}))$ norm of $u$. Then, using the Lipschitz continuity of $f$, we obtain
\begin{align*}
\|f(\widetilde{U}^{n+1})-f(I^hu(t_{n+1}))\|_\infty
& \le(\kappa+1)\|\widetilde{U}^{n+1}-I^hu(t_{n+1})\|_\infty\\
& \le(\kappa+1)(1+\tau)\|e_2^n\|_\infty+C_5\tau(\kappa+1)(h^2+\tau).
\end{align*}
Combining with \eqref{thm_error_ETD1pf1}, we have, for any $s\in(0,\tau)$, that
\begin{align}
& \Big\|\Big(1-\frac{s}{\tau}\Big)[f(U^n)-f(I^hu(t_n))]+\frac{s}{\tau}[f(\widetilde{U}^{n+1})-f(I^hu(t_{n+1}))]\Big\|_\infty\nonumber\\
& \qquad
\le\Big(1-\frac{s}{\tau}\Big)(\kappa+1)\|e_2^n\|_\infty+\frac{s}{\tau}(\kappa+1)(1+\tau)\|e_2^n\|_\infty+C_5s(\kappa+1)(h^2+\tau)\nonumber\\
& \qquad =(s+1)(\kappa+1)\|e_2^n\|_\infty+C_5s(\kappa+1)(h^2+\tau).
\end{align}
Integrating the ODE in \eqref{nonlocalAC_ETDRK2_errorheat} leads to
\begin{align}
e_2(t) & =\e^{-L_ht}e_2(0)+\int_0^t\e^{-L_h(t-s)}\Big\{\Big(1-\frac{s}{\tau}\Big)[f(U^n)-f(I^hu(t_n))]\nonumber\\
& \qquad +\frac{s}{\tau}[f(\widetilde{U}^{n+1})-f(I^hu(t_{n+1}))]+R_{h\tau}^{(2)}(s)\Big\}\,\d s,\qquad t\in[0,\tau].
\end{align}
Setting $t=\tau$ and using \eqref{thm_discrete_max_ETD1pf}, we have
\begin{align*}
\|e_2^{n+1}\|_\infty
& \le\|\e^{-L_h\tau}\|_\infty\|e_2^n\|_\infty
+[(\kappa+1)\|e_2^n\|_\infty+C_4(h^2+\tau^2)]\int_0^\tau\|\e^{-L_h(\tau-s)}\|_\infty\,\d s\\
& \qquad +[(\kappa+1)\|e_2^n\|_\infty+C_5(\kappa+1)(h^2+\tau)]\int_0^\tau s\|\e^{-L_h(\tau-s)}\|_\infty\,\d s\\
& \le\e^{-\kappa\tau}\|e_2^n\|_\infty+\frac{1-\e^{-\kappa\tau}}{\kappa}[(\kappa+1)\|e_2^n\|_\infty+C_4(h^2+\tau^2)]\\
& \qquad +\frac{\e^{-\kappa\tau}-1+\kappa\tau}{\kappa^2}[(\kappa+1)\|e_2^n\|_\infty+C_5(\kappa+1)(h^2+\tau)]\\
& =\Big(1+\tau+\frac{\e^{-\kappa\tau}-1+\kappa\tau}{\kappa^2}\Big)\|e_2^n\|_\infty\\
& \qquad +\frac{1-\e^{-\kappa\tau}}{\kappa}C_4(h^2+\tau^2)+\frac{\e^{-\kappa\tau}-1+\kappa\tau}{\kappa^2}C_5(\kappa+1)(h^2+\tau)\\
& \le\Big(1+\tau+\frac{\tau^2}{2}\Big)\|e_2^n\|_\infty+C_4\tau(h^2+\tau^2)+\frac{1}{2}C_5(\kappa+1)\tau(\tau h^2+\tau^2)\\
& \le\Big(1+\tau+\frac{\tau^2}{2}\Big)\|e_2^n\|_\infty+C\tau(h^2+\tau^2),
\end{align*}
where $C=C_4+\frac{1}{2}C_5(\kappa+1)$.  The condition $\tau\le1$ is used  in the last two steps of the above derivation.
Using the Gronwall's inequality, we obtain
\begin{align}
\|e_2^n\|_\infty
& \le\Big(1+\tau+\frac{\tau^2}{2}\Big)^n\|e_2^0\|_\infty+C\tau(h^2+\tau^2)\sum_{k=0}^{n-1}\Big(1+\tau+\frac{\tau^2}{2}\Big)^k\nonumber\\
& \le\Big(1+\tau+\frac{\tau^2}{2}\Big)^n\|e_2^0\|_\infty+C\Big[\Big(1+\tau+\frac{\tau^2}{2}\Big)^n-1\Big](h^2+\tau^2)\nonumber\\
& \le\e^{n\tau}\|e_2^0\|_\infty+C\e^{n\tau}(h^2+\tau^2),
\end{align}
which then gives us \eqref{erretd2}.
\end{proof}


Now, let us investigate the asymptotic compatibility of both ETD1 and ETDRK2 schemes.
Combining \eqref{continuous_consistency} with
the uniform estimates \eqref{discrete_consistency} of the consistency of $\hL_{\delta,h}$,
we obtain
\begin{equation}
\label{discrete_asympt_consistency}
\max_{\bx_\bi\in\Omega_h}|\hL_{\delta,h}u(\bx_\bi)-\hL_0 u(\bx_\bi)|\le C(\delta^2+h^2)\|u\|_{C^4},
\end{equation}
where $C>0$ is a constant independent of $\delta$ and $h$.
Then, we can further obtain the asymptotic compatibility of the solution to the ETD1 scheme \eqref{nonlocalAC_ETD1}.
Denote by $\varphi(\bx,t)$ the solution of the LAC equation \eqref{localAC}.
For given $\varphi(\bx,t_n)$,
the solution $\varphi(\bx,t_{n+1})$ is determined by $\varphi(\bx,t_{n+1})=v(\bx,\tau)$ with the function $v(\bx,t)$ satisfying
\begin{equation}
\label{localAC_heat}
\begin{dcases}
\piandao{v}{t}=-\kappa v+\eps^2\hL_0v+f(v), & \bx\in\Omega,\ t\in(0,\tau),\\
\text{$v(\cdot,t)$ is $\Omega$-periodic}, & t\in[0,\tau],\\
v(\bx,0)=\varphi(\bx,t_n), & \bx\in\overline{\Omega}.
\end{dcases}
\end{equation}
Let $\widehat{e}(t)=W_1(t)-I^hv(t)$, where $W_1(t)$ is defined by \eqref{nonlocalAC_ETD1_heat}.
Then, the difference between \eqref{nonlocalAC_ETD1_heat} and \eqref{localAC_heat} yields
\begin{equation}
\label{localAC_ETD1_errorheat}
\begin{dcases}
\daoshu{\widehat{e}(t)}{t}=-L_h\widehat{e}(t)+f(U^n)-f(I^h\varphi(t_n))+\widehat{R}^\delta_{h\tau}(t), & t\in(0,\tau),\\
\widehat{e}(0)=U^n-I^h\varphi(t_n)=:\widehat{e}^n,
\end{dcases}
\end{equation}
where the remainder $\widehat{R}^\delta_{h\tau}(t)$ is given by
$$\widehat{R}^\delta_{h\tau}(t)=\eps^2(D_hI^h\varphi(t_n+t)-I^h\hL_0\varphi(t_n+t))+f(I^h\varphi(t_n))-f(I^h\varphi(t_n+t)),$$
and, according to the estimates \eqref{discrete_asympt_consistency}
and the Lipschitz continuity of $f$ under the condition $\kappa\ge 2$,
is bounded by
$$\|\widehat{R}^\delta_{h\tau}(t)\|_\infty\le C(\delta^2+h^2+\tau),\quad\forall\,t\in[0,\tau],$$
where $C>0$ depends on $\eps$, $\kappa$ and the $C^1([0,T];\Cp^4(\overline{\Omega}))$ norm of $\varphi$,
but independent of $\delta$, $h$ and $\tau$.
By conducting similar analysis as done for  error estimates,
we can obtain the asymptotic compatibility of the numerical solution to the ETD1 scheme \eqref{nonlocalAC_ETD1}.
So does the case for the ETDRK2 scheme \eqref{nonlocalAC_ETDRK2}.
Therefore, we have the following results.

\begin{theorem}[Asymptotic compatibility]
Assume that the solution $\varphi$ of the local Allen-Cahn equation \eqref{localAC}
belongs to $C^1([0,T];\Cp^4(\overline{\Omega}))$ {\rm(resp. $C^2([0,T];\Cp^4(\overline{\Omega}))$)}
and $\{U^n\}_{n=0}^{K_t}$ is generated by the ETD1 scheme \eqref{nonlocalAC_ETD1}
{\rm(resp. the ETDRK2 scheme \eqref{nonlocalAC_ETDRK2})}
with $U^0=I^h\varphi_0$.
If $\|\varphi_0\|_{L^\infty}\le1$, then we have
\begin{equation}
\begin{aligned}
\|U^n-I^h\varphi(t_n)\|_\infty & \le C\e^{t_n}(\delta^2+h^2+\tau),\quad t_n\le T\\
\text{\rm(resp. }\|U^n-I^h\varphi(t_n)\|_\infty & \le C\e^{t_n}(\delta^2+h^2+\tau^2),\quad t_n\le T\text{\rm)}
\end{aligned}
\end{equation}
for any $\delta>0$, $h>0$ and $\tau>0$ {\rm(resp. $\tau\in(0,1]$)},
where the constant $C$ depends on the $C^1([0,T];\Cp^4(\overline{\Omega}))$ norm
{\rm(resp. $C^2([0,T];\Cp^4(\overline{\Omega}))$ norm)} of $\varphi$,
but independent of $\delta$, $h$ and $\tau$.
\end{theorem}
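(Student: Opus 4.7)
The plan is to recycle the arguments of Theorems \ref{thm_error_ETD1} and \ref{thm_error_ETDRK2} almost verbatim, with two substitutions: replace the exact NAC solution $u$ by the exact LAC solution $\varphi$, and replace the nonlocal consistency estimate of Lemma \ref{lem_discrete_consistency} by the asymptotic consistency estimate \eqref{discrete_asympt_consistency}. The excerpt has already set up the error equation \eqref{localAC_ETD1_errorheat} for the ETD1 case and stated the truncation bound $\|\widehat{R}^\delta_{h\tau}(t)\|_\infty\le C(\delta^2+h^2+\tau)$, so the remaining task is essentially to run the Duhamel/Gronwall machinery.

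First, I would observe that the LAC equation \eqref{localAC} satisfies a maximum principle: if $\|\varphi_0\|_{L^\infty}\le 1$, then $\|\varphi(\cdot,t)\|_{L^\infty}\le 1$ for all $t\ge 0$, which is classical. Combined with Theorems \ref{thm_discrete_max_ETD1} and \ref{thm_discrete_max_ETDRK2} (which give $\|U^n\|_\infty\le 1$), both arguments of $f$ lie in $[-1,1]$, and the Lipschitz estimate \eqref{lem_rhsnorm_f0p} yields
$$\|f(U^n)-f(I^h\varphi(t_n))\|_\infty\le(\kappa+1)\|\widehat{e}^n\|_\infty.$$
Applying the variation-of-constants formula to \eqref{localAC_ETD1_errorheat}, setting $t=\tau$, invoking Lemma \ref{lem_invnorm} to bound $\|\e^{-L_h\tau}\|_\infty\le\e^{-\kappa\tau}$, and using $1-\e^{-\kappa\tau}\le\kappa\tau$ produces the recursion
$$\|\widehat{e}^{n+1}\|_\infty\le(1+\tau)\|\widehat{e}^n\|_\infty+C\tau(\delta^2+h^2+\tau),$$
which is identical in form to \eqref{thm_error_ETD1pf3}. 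A discrete Gronwall inequality, together with $\widehat{e}^0=0$ and $n\tau=t_n$, then produces the $C\e^{t_n}(\delta^2+h^2+\tau)$ bound for ETD1.

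For the ETDRK2 case, the analysis mirrors Theorem \ref{thm_error_ETDRK2}: the corresponding error equation has a truncation term of the form
$$\widehat{R}^{\delta,(2)}_{h\tau}(t)=\eps^2(D_hI^h\varphi(t_n+t)-I^h\hL_0\varphi(t_n+t))+\Big[\Big(1-\tfrac{t}{\tau}\Big)f(I^h\varphi(t_n))+\tfrac{t}{\tau}f(I^h\varphi(t_{n+1}))-f(I^h\varphi(t_n+t))\Big],$$
whose $\infty$-norm is bounded by $C(\delta^2+h^2+\tau^2)$ thanks to \eqref{discrete_asympt_consistency} together with the standard $\mathcal{O}(\tau^2)$ linear-interpolation estimate under the $C^2$-in-time smoothness of $\varphi$. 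The intermediate stage $\widetilde{U}^{n+1}$ is controlled by the ETD1 recursion above, and the integrals $\int_0^\tau\|\e^{-L_h(\tau-s)}\|_\infty\,\d s$ and $\int_0^\tau s\|\e^{-L_h(\tau-s)}\|_\infty\,\d s$ are handled exactly as in Theorem \ref{thm_error_ETDRK2}, yielding the $\delta^2+h^2+\tau^2$ error bound after a second application of discrete Gronwall. The main bookkeeping obstacle is merely to verify that the constants $C$ produced along the way depend only on $\eps$, $\kappa$, $T$, and the appropriate $C^k([0,T];\Cp^4(\overline{\Omega}))$ norm of $\varphi$, but not on $\delta$, $h$, or $\tau$; this uniformity is precisely what \eqref{discrete_asympt_consistency} is designed to deliver.
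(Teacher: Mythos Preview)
Your proposal is correct and follows exactly the route the paper takes: the text preceding the theorem already sets up \eqref{localAC_ETD1_errorheat} and the truncation bound, then simply says ``by conducting similar analysis as done for error estimates'' one obtains the result for ETD1, and ``so does the case for the ETDRK2 scheme.'' You have spelled out precisely those omitted details (maximum principle for $\varphi$, Lipschitz bound on $f$, Duhamel plus discrete Gronwall, and the analogous ETDRK2 truncation with the $\mathcal{O}(\tau^2)$ interpolation estimate), so there is nothing to add.
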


\section{Discrete energy stability}

We  first show that the ETD1 scheme \eqref{nonlocalAC_ETD1} inherits  the energy decay law \eqref{energy_decay}
in the discrete sense,
with respect to the discretized energy $E_h$ defined by
\begin{equation}
\label{nonlocal_disenergy}
E_h(U)=\frac{1}{4}\sum_{i=1}^{dN}(U_i^2-1)^2-\frac{\eps^2}{2}U^TD_hU,\quad\forall\,U\in\R^{dN}.
\end{equation}

\begin{theorem}
The approximating solution $\{U^n\}_{n=0}^{K_t}$ generated by the ETD1 scheme \eqref{nonlocalAC_ETD1}
satisfies the energy inequality
$$E_h(U^{n+1})\le E_h(U^n),\quad 0\le n\le K_t-1$$
for any $\tau>0$, i.e., the ETD1 scheme is unconditionally energy stable.
\end{theorem}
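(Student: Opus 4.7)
The plan is to rewrite the ETD1 update as a preconditioned gradient step with respect to $E_h$ and then control the energy change via a coordinatewise Taylor expansion that leans on the discrete maximum principle. Set $V:=U^{n+1}-U^n$ and $A:=L_h^{-1}(I_{dN}-\e^{-L_h\tau})$. A direct rearrangement of \eqref{nonlocalAC_ETD1} gives $V = A\bigl(f(U^n)-L_hU^n\bigr)$, and differentiating \eqref{nonlocal_disenergy} yields $\nabla E_h(U) = U^{.3}-U-\eps^2 D_h U = L_hU-f(U)$, so the scheme reads $V = -A\,\nabla E_h(U^n)$. The matrix $A$ is a positive scalar function of the symmetric positive definite $L_h$ (since $\lambda\mapsto\lambda^{-1}(1-\e^{-\lambda\tau})$ is positive on $(0,\infty)$), hence is itself symmetric positive definite by Lemma~\ref{lem_matfun}\,(3), and commutes with every other function of $L_h$ by Lemma~\ref{lem_matfun}\,(1).

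Next I would expand $E_h(U^{n+1})-E_h(U^n)$ into a linear and a quadratic piece. The nonlocal diffusion part contributes the exact identity $-\tfrac{\eps^2}{2}\bigl[(U^{n+1})^T D_h U^{n+1}-(U^n)^T D_h U^n\bigr] = -\eps^2 V^T D_h U^n - \tfrac{\eps^2}{2}V^T D_h V$. For the quartic part I would apply the one-variable Taylor theorem to $\phi(x)=\tfrac{1}{4}(x^2-1)^2$ in each coordinate: this gives $\phi(U^{n+1}_i)-\phi(U^n_i) = ((U^n_i)^3-U^n_i)V_i + \tfrac{1}{2}\phi''(\eta_i)V_i^2$ for some $\eta_i$ between $U^n_i$ and $U^{n+1}_i$. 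The discrete maximum principle (Theorem~\ref{thm_discrete_max_ETD1}) forces $|U^n_i|,|U^{n+1}_i|\le 1$, hence $|\eta_i|\le 1$ and $\phi''(\eta_i)=3\eta_i^2-1\le 2$. Summing and combining with the diffusion identity gives
\begin{equation*}
E_h(U^{n+1}) - E_h(U^n) \le V^T\nabla E_h(U^n) + V^T\bigl(I_{dN}-\tfrac{\eps^2}{2}D_h\bigr)V.
\end{equation*}

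To close the estimate, I would rewrite the quadratic correction using $L_h = -\eps^2 D_h + \kappa I_{dN}$ as $V^T\bigl[(1-\tfrac{\kappa}{2})I_{dN}+\tfrac{1}{2}L_h\bigr]V$. The standing condition $\kappa\ge 2$ makes the $(1-\kappa/2)I_{dN}$ piece negative semidefinite, so this term is bounded above by $\tfrac{1}{2}V^T L_h V$. Substituting $V=-A\,\nabla E_h(U^n)$ and using the identity $L_hA = I_{dN}-\e^{-L_h\tau}$ collapses the right-hand side to
\begin{equation*}
-\tfrac{1}{2}\,\nabla E_h(U^n)^T A\bigl(I_{dN}+\e^{-L_h\tau}\bigr)\nabla E_h(U^n),
\end{equation*}
which is nonpositive because $A$ and $I_{dN}+\e^{-L_h\tau}$ are commuting symmetric positive definite matrix functions of $L_h$, so their product is itself symmetric positive definite.

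The main obstacle is the nonconvexity of the double-well potential: $\phi''$ is unbounded from above on $\mathbb{R}$, and the coordinatewise mean-value points $\eta_i$ are not a priori controlled. The discrete maximum principle proved earlier is precisely what supplies the $L^\infty$ bound needed to secure $\phi''(\eta_i)\le 2$, and the threshold $\kappa\ge 2$ is exactly what lets the leftover $V^T V$ term be absorbed into $\tfrac{1}{2}V^T L_h V$ and recast as the clean exponential expression above.
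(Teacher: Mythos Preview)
Your proof is correct and rests on the same two pillars as the paper's argument: the discrete maximum principle (Theorem~\ref{thm_discrete_max_ETD1}) to cap the second derivative of the double well by~$2$, and the spectral calculus of Lemma~\ref{lem_matfun} to certify that the residual quadratic form is negative semidefinite. The organization, however, is genuinely different. The paper expands the diffusion part about $U^{n+1}$ and simply drops the nonnegative term $\tfrac{\eps^2}{2}V^TD_hV$, inflates the quartic remainder $2V^TV$ to $\kappa V^TV$, and arrives at $E_h(U^{n+1})-E_h(U^n)\le V^T(L_hU^{n+1}-f(U^n))$; it then solves the ETD1 formula for $f(U^n)$ to identify $L_hU^{n+1}-f(U^n)=B_1V$ with $B_1=\tau^{-1}g_1(L_h\tau)$ and checks $g_1(a)=a-a/(1-\e^{-a})<0$. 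You instead recast the scheme up front as a preconditioned gradient step $V=-A\,\nabla E_h(U^n)$, keep the diffusion remainder, and combine it with the quartic remainder into $(1-\tfrac{\kappa}{2})I_{dN}+\tfrac12 L_h\preceq\tfrac12 L_h$; the final bound $-\tfrac12\,g^TA(I_{dN}+\e^{-L_h\tau})g$ is in fact sharper than the paper's $V^TB_1V$ by $\tfrac12 V^TL_hV$. Your gradient–descent viewpoint makes the role of $\kappa\ge2$ as a ``Lipschitz step–size'' condition transparent and is arguably easier to adapt to other stabilized schemes, while the paper's substitution $L_hU^{n+1}-f(U^n)=B_1V$ is what generalizes directly to the ETDRK2 analysis in the subsequent theorem.
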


\begin{proof}
The difference between the discrete energies at two consecutive time levels yields
\begin{align}
E_h(U^{n+1})-E_h(U^n)
 =&\frac{1}{4}\sum_{i=1}^{dN}[((U^{n+1}_i)^2-1)^2-((U^n_i)^2-1)^2]\nonumber\\
& -\frac{\eps^2}{2}[(U^{n+1})^TD_hU^{n+1}-(U^n)^TD_hU^n].\label{energy stability_pf}
\end{align}
It is easy to verify that
$$\frac{1}{4}[(a^2-1)^2-(b^2-1)^2]\le(b^3-b)(a-b)+2(a-b)^2,\qquad\forall\,a,b\in[-1,1].$$
Since $\kappa\ge 2$,
it follows from Theorem \ref{thm_discrete_max_ETD1} that $\|U^n\|_\infty\le1$ and $\|U^{n+1}\|_\infty\le1$,
then we have
\begin{align*}
&  \frac{1}{4}\sum_{i=1}^{dN}[((U^{n+1}_i)^2-1)^2-((U^n_i)^2-1)^2]\\
&\qquad \le(U^{n+1}-U^n)^T((U^n)^{.3}-U^n)+\kappa(U^{n+1}-U^n)^T(U^{n+1}-U^n)\\
&\qquad =\kappa(U^{n+1}-U^n)^TU^{n+1}-(U^{n+1}-U^n)^Tf(U^n).
\end{align*}
On the other hand, direct calculations lead to
\begin{align*}
&  -\frac{\eps^2}{2}[(U^{n+1})^TD_hU^{n+1}-(U^n)^TD_hU^n]\\
& \qquad =-\eps^2(U^{n+1}-U^n)^TD_hU^{n+1}+\frac{\eps^2}{2}(U^{n+1}-U^n)^TD_h(U^{n+1}-U^n)\\
& \qquad \le -\eps^2(U^{n+1}-U^n)^TD_hU^{n+1}
\end{align*}
due to the negative semi-definiteness of the matrix $D_h$.
Thus, we obtain from \eqref{energy stability_pf} that
\begin{align}
E_h(U^{n+1})-E_h(U^n)
& \le \kappa (U^{n+1}-U^n)^TU^{n+1}-(U^{n+1}-U^n)^Tf(U^n)\nonumber\\
& \qquad -\eps^2(U^{n+1}-U^n)^TD_hU^{n+1}\nonumber\\
& =(U^{n+1}-U^n)^T(L_hU^{n+1}-f(U^n)).
\end{align}
We solve $f(U^n)$ from \eqref{nonlocalAC_ETD1var}, together with Lemma \ref{lem_matfun} (1), to get
\begin{align*}
f(U^n) & =(I-\e^{-L_h\tau})^{-1}L_h(U^{n+1}-\e^{-L_h\tau}U^n)\\
& =(I-\e^{-L_h\tau})^{-1}L_h(U^{n+1}-U^n+(I-\e^{-L_h\tau})U^n)\\
& =(I-\e^{-L_h\tau})^{-1}L_h(U^{n+1}-U^n)+L_hU^n,
\end{align*}
and then
$$L_hU^{n+1}-f(U^n)=L_h(U^{n+1}-U^n)-(I-\e^{-L_h\tau})^{-1}L_h(U^{n+1}-U^n)=B_1(U^{n+1}-U^n),$$
where $B_1:=L_h-(I-\e^{-L_h\tau})^{-1}L_h$.
Define a function
$$g_1(a):=a-\frac{a}{1-\e^{-a}},\quad a\not=0,$$
then $g_1(a)<0$ for any $a>0$ and $B_1=\tau^{-1}g_1(L_h\tau)$.
Since $L_h$ is symmetric and positive definite,
by using Lemma \ref{lem_matfun} (2) and (3),
we know that $B_1$ is symmetric and negative definite.
Therefore, we obtain
$$E_h(U^{n+1})-E_h(U^n)\le(U^{n+1}-U^n)^TB_1(U^{n+1}-U^n)\le0,$$
which completes the proof.
\end{proof}

For the ETDRK2 scheme \eqref{nonlocalAC_ETDRK2},
we can prove the uniform boundedness of the discretized energy $E_h$.

\begin{theorem}
Under the assumptions of Theorem \ref{thm_error_ETDRK2},
the approximate solution $\{U^n\}_{n=0}^{K_t}$ generated by the ETDRK2 scheme \eqref{nonlocalAC_ETDRK2} satisfies
$$E_h(U^{n+1})\le E_h(U^n)+\widetilde{C}h^{-\frac{1}{2}}(h^2+\tau)^2,\quad 0\le n\le K_t-1$$
for any $h>0$ and $1\ge \tau>0$, where the constant $\widetilde{C}$ is independent of $h$ and $\tau$.
Furthermore, if $h\le1$ and $\tau=\lambda\sqrt{h}$ for some constant $\lambda>0$, we have
$$E_h(U^n)\le E_h(U^0)+\widehat{C},\quad 0\le n\le K_t,$$
where the constant $\widehat{C}$ is independent of $h$ and $\tau$,
i.e., the discrete energy is uniformly bounded.
\end{theorem}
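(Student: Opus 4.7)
The plan is to mimic the ETD1 energy argument but treat the Runge--Kutta correction as a controlled perturbation of the ETD1 step. Observe that $\widetilde{U}^{n+1}$ given by \eqref{nonlocalAC_ETDRK2var1} is exactly the ETD1 update applied to $U^n$, so the previous theorem immediately yields $E_h(\widetilde{U}^{n+1})\le E_h(U^n)$. Writing
\[
E_h(U^{n+1})-E_h(U^n)=\bigl[E_h(\widetilde{U}^{n+1})-E_h(U^n)\bigr]+\bigl[E_h(U^{n+1})-E_h(\widetilde{U}^{n+1})\bigr]
\]
thus reduces the task to estimating only the second bracket by $\widetilde{C}h^{-1/2}(h^2+\tau)^2$.

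To bound that bracket I would first control $U^{n+1}-\widetilde{U}^{n+1}$ pointwise. From \eqref{nonlocalAC_ETDRK2var2} and the integral representation $\tau\phi_2(L_h\tau)=\int_0^\tau (s/\tau)\e^{-L_h(\tau-s)}\,\d s$, Lemma \ref{lem_invnorm} gives $\|\tau\phi_2(L_h\tau)\|_\infty\le\tau/2$. The discrete maximum principle (Theorem \ref{thm_discrete_max_ETDRK2}) keeps $U^n$, $\widetilde{U}^{n+1}$ and $U^{n+1}$ bounded by one, so $f$ is Lipschitz with constant $\kappa+1$ along the joining segments; combined with the ETDRK2 error estimate of Theorem \ref{thm_error_ETDRK2} at $t_n$, the intermediate ETD1-type bound on $\widetilde{U}^{n+1}-I^hu(t_{n+1})$ already derived inside that proof, and the smoothness bound $\|I^hu(t_{n+1})-I^hu(t_n)\|_\infty\le C\tau$, this yields $\|\widetilde{U}^{n+1}-U^n\|_\infty\le C(h^2+\tau)$ and consequently
\[
\|U^{n+1}-\widetilde{U}^{n+1}\|_\infty\le C\tau(h^2+\tau)\le C(h^2+\tau)^2.
\]

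Next I would convert this pointwise bound into an energy increment of the advertised size. The mean-value identity
\[
E_h(U^{n+1})-E_h(\widetilde{U}^{n+1})=\int_0^1\nabla E_h(V(s))^T(U^{n+1}-\widetilde{U}^{n+1})\,\d s
\]
along the segment $V(s)=(1-s)\widetilde{U}^{n+1}+sU^{n+1}$ is available, and the gradient $\nabla E_h(V)=V^{.3}-V-\eps^2D_hV$ stays bounded in $\ell^\infty$ there because $\|V\|_\infty\le 1$ while $\|D_h\|_\infty$ is controlled uniformly in $h$ and $\delta$. Pairing in the discrete $\ell^2$ inner product and invoking the standard grid inverse estimate to trade $\ell^\infty$ against $\ell^2$ is what produces the sharp $h^{-1/2}$ factor; the crude $\ell^\infty$--$\ell^1$ duality would instead cost a full $h^{-d}$, which would wreck the telescoping below.

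Finally, summing the per-step bound gives $E_h(U^n)-E_h(U^0)\le(T/\tau)\widetilde{C}h^{-1/2}(h^2+\tau)^2$; under $\tau=\lambda\sqrt{h}$ with $h\le 1$ one has $(h^2+\tau)^2\le 2(h^4+\lambda^2 h)\le Ch$, so the right-hand side reduces to $CT\widetilde{C}/\lambda=:\widehat{C}$, independent of $h$ and $\tau$. The main obstacle is the sharp norm-conversion step above: only the careful combination of the two Section~4 error estimates (rather than merely the Lipschitz bound on $f$) with the grid inverse inequality at the right exponent prevents the telescoped sum from diverging as $h\to 0$ with $\tau=\lambda\sqrt{h}$.
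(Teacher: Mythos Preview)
Your route is genuinely different from the paper's, and it has a real gap.

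The paper does not split through $\widetilde{U}^{n+1}$. Instead it repeats the ETD1 opening verbatim with $U^{n+1}$ in place of $\widetilde{U}^{n+1}$, reaching
\[
E_h(U^{n+1})-E_h(U^n)\le(U^{n+1}-U^n)^T\bigl(L_hU^{n+1}-f(U^n)\bigr),
\]
and then solves the ETDRK2 update \eqref{nonlocalAC_ETDRK2var} for $f(U^n)$ to obtain the algebraic identity
\[
L_hU^{n+1}-f(U^n)=B_1(U^{n+1}-U^n)+B_2\bigl(f(\widetilde U^{n+1})-f(U^n)\bigr),
\]
where $B_1=\tau^{-1}g_1(L_h\tau)$ is symmetric negative definite (its quadratic form is dropped with the correct sign) and $B_2=g_2(L_h\tau)$ satisfies $\|B_2\|_2<1$. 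The mean-value theorem converts $f(\widetilde U^{n+1})-f(U^n)$ into $G^n(\widetilde U^{n+1}-U^n)$ with a diagonal $G^n$, $\|G^n\|_\infty\le\kappa+1$, and the $h^{-1/2}$ then comes from the single matrix-norm conversion $\|B_2G^n\|_\infty\le\sqrt{dN}\,\|B_2\|_2\|G^n\|_\infty$, combined with the two increment bounds $\|U^{n+1}-U^n\|_\infty,\|\widetilde U^{n+1}-U^n\|_\infty\le C(h^2+\tau)$ furnished by Theorems~\ref{thm_error_ETD1} and \ref{thm_error_ETDRK2}. Crucially, $L_h$ (and hence $D_h$) never needs to be bounded in any norm: it is absorbed into $B_1$ and $B_2$ through the functional calculus of Lemma~\ref{lem_matfun}.

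Your argument, by contrast, requires $\|\nabla E_h(V)\|_\infty$ to be uniformly bounded along the segment, i.e.\ $\|D_hV\|_\infty\le C\|V\|_\infty$ with $C$ independent of $h$. That fails for the non-integrable kernels the paper explicitly covers (e.g.\ $\alpha\in[2,4)$ in \eqref{frac_kernel}): the diagonal of $D_h$ then scales like $\int_h^\delta r^{d-1}\rho_\delta(r)\,\d r$, which diverges as $h\to0$, so $\|D_h\|_\infty$ is unbounded. Even for integrable kernels your ``grid inverse estimate'' step does not deliver $h^{-1/2}$: pairing an $\ell^\infty$-bounded vector of length $dN$ against an $\ell^\infty$-small increment costs a full factor $dN=O(h^{-1})$, not $\sqrt{dN}$, and with $h^{-1}$ in the per-step bound the telescoped sum under $\tau=\lambda\sqrt{h}$ still blows up like $h^{-1/2}$. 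The paper's $\sqrt{dN}$ arises precisely because a spectral-norm bound on $B_2$ is available, which your decomposition via $E_h(\widetilde U^{n+1})$ throws away.
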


\begin{proof}
The first step is to calculate the increment $E_h(U^{n+1})-E_h(U^n)$ directly,
which is completely identical to the proof for the ETD1 scheme,
and we obtain
$$E_h(U^{n+1})-E_h(U^n)\le(U^{n+1}-U^n)^T(L_hU^{n+1}-f(U^n)).$$
Using \eqref{nonlocalAC_ETDRK2var1} and \eqref{nonlocalAC_ETDRK2var2}, we then get
\begin{equation}
\label{ttt}
U^{n+1}=\phi_0(L_h\tau)U^n+\tau\phi_1(L_h\tau)f(U^n)+\tau\phi_2(L_h\tau)(f(\widetilde{U}^{n+1})-f(U^n)).
\end{equation}
Acting $(\tau\phi_1(L_h\tau))^{-1}$ on both sides of \eqref{ttt} gives us
\begin{align*}
f(U^n) & =(I-\e^{-L_h\tau})^{-1}L_h(U^{n+1}-U^n)+L_hU^n\\
& \qquad -(L_h\tau)^{-1}(I-\e^{-L_h\tau})^{-1}(\e^{-L_h\tau}-I+L_h\tau)(f(\widetilde{U}^{n+1})-f(U^n)),
\end{align*}
and then, using the notation $B_1$ defined in the proof for the ETD1 scheme,
we obtain
$$L_hU^{n+1}-f(U^n)=B_1(U^{n+1}-U^n)+B_2(f(\widetilde{U}^{n+1})-f(U^n)),$$
where $B_2=g_2(L_h\tau)$ with
$$g_2(a):=\frac{e^{-a}-1+a}{a(1-\e^{-a})},\quad a\not=0.$$
Since $0<g_2(a)<1$ for any $a>0$, we know that $B_2$ is symmetric and positive definite and $\|B_2\|_2<1$.
Using the mean-value theorem, we have
$$f(\widetilde{U}^{n+1})-f(U^n)=G^n(\widetilde{U}^{n+1}-U^n),$$
where $G^n$ is a diagonal matrix with, according to \eqref{lem_rhsnorm_f0p}, the diagonal entries between $0$ and $\kappa+1$,
which implies that $\|G^n\|_\infty\le \kappa+1$.
Then, by the negative definiteness of $B_1$, we obtain
$$E_h(U^{n+1})-E_h(U^n)\le(U^{n+1}-U^n)^TB_2G^n(\widetilde{U}^{n+1}-U^n).$$
According to Theorem \ref{thm_error_ETDRK2}, we can derive
\begin{align*}
\|U^{n+1}-U^n\|_\infty
& \le\|U^{n+1}-I^hu(t_{n+1})\|_\infty+\|I^h(u(t_{n+1})-u(t_n))\|_\infty+\|I^hu(t_n)-U^n\|_\infty\\
& \le C_6\e^{t_{n+1}}(h^2+\tau^2)+C_7\tau+C_6\e^{t_n}(h^2+\tau^2)\\
& \le C_8(h^2+\tau),
\end{align*}
for any $h>0$ and $\tau\in(0,1]$,
where the constants $C_6$ and $C_7$ depend on the $C^2([0,T];\Cp^4(\overline{\Omega}))$ norm of $u$
and $C_8=2C_6\e^T+C_7$.
Similarly, using Theorems \ref{thm_error_ETD1}, we can obtain
$$\|\widetilde{U}^{n+1}-U^n\|_\infty\le C_9(h^2+\tau),$$
with the constant $C_9>0$ depending on $T$ and the $C^1([0,T];\Cp^4(\overline{\Omega}))$ norm of $u$.
In addition, we have
$$\|B_2G^n\|_\infty\le\sqrt{dN}\|B_2\|_2\|G^n\|_\infty\le\sqrt{dN}(\kappa+1)=h^{-\frac{1}{2}}\sqrt{dX}(\kappa+1).$$
Therefore, we obtain
$$E_h(U^{n+1})-E_h(U^n)\le\widetilde{C}h^{-\frac{1}{2}}(h^2+\tau)^2,$$
where the constant $\widetilde{C}=C_8C_9\sqrt{dX}(\kappa+1)$ is independent of $h$ and $\tau$.
By induction, we further obtain
$$E_h(U^n)\le E_h(U^0)+\widetilde{C}Th^{-\frac{1}{2}}\tau^{-1}(h^2+\tau)^2,\quad 0\le n\le K_t.$$
When $h\le1$ and $\tau=\lambda\sqrt{h}$, it holds $h^{-\frac{1}{2}}\tau^{-1}(h^2+\tau)^2\le(\lambda+1)^2/\lambda$.
The proof is then completed by setting $\widehat{C}=\widetilde{C}T(\lambda+1)^2/\lambda$.
\end{proof}


\section{Numerical experiments}

In this section, we will carry out some numerical experiments in the 2D space
to demonstrate the effectiveness and efficiency of the ETD schemes \eqref{nonlocalAC_ETD1} and \eqref{nonlocalAC_ETDRK2}
for solving the NAC equation \eqref{nonlocalAC}.
The fractional power kernels
\begin{equation}
\label{frac_kernel}
\rho_\delta(r)=\frac{2(4-\alpha)}{\pi\delta^{4-\alpha}r^\alpha}\chi_{(0,\delta]}(r),\quad\alpha\in[0,4),
\end{equation}
is chosen and satisfies the finite second order moment condition \eqref{kernel_2ndmoment}.
When $\alpha\in[0,2)$, the kernel satisfies $\rho_\delta(|\bs|)\in L^1(\R^2)$,
which means that the nonlocal diffusion operator $\hL_\delta$ is a bounded linear operator in this case.
 The kernel is non-integrable when $\alpha\in[2,4)$.
We first verify the temporal and spatial convergence rates of the fully discrete schemes with a smooth initial data,
and then check the discrete maximum principle and energy stability of the evolutions beginning with a random initial state.
Next, we present a further numerical investigation on the steady state solutions to the model with integrable kernels.
The ETDRK2 scheme \eqref{nonlocalAC_ETDRK2} is adopted in all the simulations
while the ETD1 scheme \eqref{nonlocalAC_ETD1} is only considered for the temporal convergence tests due to the lack of high accuracy.
The domain $\Omega=(0,2\pi)\times(0,2\pi)$ is used in all examples.
We also take the stabilizing parameter  $\kappa=2$ for the numerical schemes in all experiments.

\subsection{Convergence tests}
~

\begin{example}
\label{eg_convergence}
We consider the NAC equation \eqref{nonlocalAC} with a smooth initial data $u_0(x,y)=0.5\sin x\sin y$.
We set the interfacial parameter $\eps=0.1$ and the terminal time $T=0.5$.
For the kernel \eqref{frac_kernel}, $\alpha=1$ (integrable) and $\alpha=3$ (non-integrable) are adopted, respectively.
\end{example}

First, by setting $N=256$, we tested the convergence in time for the cases $\delta=0.2$ and $\delta=2$.
We calculated the numerical solutions of the NAC equation
using the ETD1 scheme \eqref{nonlocalAC_ETD1} and the ETDRK2 scheme \eqref{nonlocalAC_ETDRK2}
with various time step sizes $\tau=0.05\times2^{-k}$ with $k=0,1,\dots,7$.
To compute the errors, we treated the solution obtained by the
ETDRK2 scheme with $\tau=10^{-6}$ as the benchmark.
The maximum-norms of the numerical errors and corresponding convergence rates are given in \tablename~\ref{table_conv_time},
where the expected temporal convergence rates ($1$ for ETD1 and $2$ for ETDRK2)
are obviously observed in both cases of integrable and non-integrable kernels.
It is also easy to see that the numerical errors are almost independent of the choices of $\delta$ and $\alpha$.

\begin{table}[!ht]
\centering
\caption{ Temporal convergence rates in the maximum-norm sense in Example \ref{eg_convergence}.}
\label{table_conv_time}
\small
\begin{tabular}{|*{10}{c|}}
\hline
\multicolumn{2}{|c|}{\multirow{3}{*}{$\tau=0.05$}}
& \multicolumn{4}{c|}{$\alpha=1$ (integrable kernel)} & \multicolumn{4}{c|}{$\alpha=3$ (non-integrable kernel)} \\
\cline{3-10}
\multicolumn{2}{|c|}{} & \multicolumn{2}{c|}{$\delta=0.2$} & \multicolumn{2}{c|}{$\delta=2$} & \multicolumn{2}{c|}{$\delta=0.2$} & \multicolumn{2}{c|}{$\delta=2$} \\
\cline{3-10}
\multicolumn{2}{|c|}{} & Error & Rate & Error & Rate & Error & Rate & Error & Rate \\
\hline\hline
\multirow{8}{*}{ETD1} & $\tau$ & 1.082e-2 & $-$ & 1.090e-2 & $-$ & 1.084e-2 & $-$ & 1.087e-2 & $-$ \\
~ & $\tau/2$ & 5.535e-3 & 0.9670 & 5.580e-3 & 0.9666 & 5.545e-3 & 0.9669 & 5.561e-3 & 0.9668 \\
~ & $\tau/4$ & 2.800e-3 & 0.9833 & 2.823e-3 & 0.9831 & 2.805e-3 & 0.9833 & 2.813e-3 & 0.9832 \\
~ & $\tau/8$ & 1.408e-3 & 0.9916 & 1.420e-3 & 0.9915 & 1.410e-3 & 0.9916 & 1.415e-3 & 0.9915 \\
~ & $\tau/16$ & 7.060e-4 & 0.9957 & 7.121e-4 & 0.9957 & 7.074e-4 & 0.9957 & 7.095e-4 & 0.9957 \\
~ & $\tau/32$ & 3.536e-4 & 0.9979 & 3.566e-4 & 0.9978 & 3.542e-4 & 0.9979 & 3.553e-4 & 0.9979 \\
~ & $\tau/64$ & 1.769e-4 & 0.9989 & 1.784e-4 & 0.9989 & 1.772e-4 & 0.9989 & 1.778e-4 & 0.9989 \\
~ & $\tau/128$ & 8.849e-5 & 0.9995 & 8.924e-5 & 0.9995 & 8.865e-5 & 0.9995 & 8.892e-5 & 0.9995 \\
\hline
\multirow{8}{*}{ETDRK2} & $\tau$ & 6.410e-4 & $-$ & 6.464e-4 & $-$ & 6.422e-4 & $-$ & 6.441e-4 & $-$ \\
~ & $\tau/2$ & 1.676e-4 & 1.9352 & 1.690e-4 & 1.9350 & 1.679e-4 & 1.9352 & 1.684e-4 & 1.9351 \\
~ & $\tau/4$ & 4.287e-5 & 1.9672 & 4.323e-5 & 1.9671 & 4.294e-5 & 1.9672 & 4.308e-5 & 1.9671 \\
~ & $\tau/8$ & 1.084e-5 & 1.9835 & 1.093e-5 & 1.9835 & 1.086e-5 & 1.9835 & 1.089e-5 & 1.9835 \\
~ & $\tau/16$ & 2.726e-6 & 1.9917 & 2.749e-6 & 1.9917 & 2.730e-6 & 1.9917 & 2.739e-6 & 1.9917 \\
~ & $\tau/32$ & 6.834e-7 & 1.9959 & 6.892e-7 & 1.9959 & 6.846e-7 & 1.9959 & 6.867e-7 & 1.9959 \\
~ & $\tau/64$ & 1.711e-7 & 1.9981 & 1.725e-7 & 1.9981 & 1.714e-7 & 1.9981 & 1.719e-7 & 1.9981 \\
~ & $\tau/128$ & 4.278e-8 & 1.9997 & 4.314e-8 & 1.9997 & 4.285e-8 & 1.9997 & 4.299e-8 & 1.9997 \\
\hline
\end{tabular}
\end{table}

Next, we tested the convergence with respect to the spatial size $h$ by fixing $\delta=2$ and $\tau=T$.
The numerical solution of the NAC equation obtained by the ETDRK2 scheme with $N=4096$ is treated as the benchmark
for computing the errors of the numerical solutions obtained with $N=2^k$ with $k=4,5,\dots,10$.
The numerical errors in the maximum-norm sense are presented in \tablename~\ref{table_conv_space},
and it is observed that the convergence rates with respect to $h$
are almost of second order in both cases of integrable and non-integrable kernels,
which is again consistent with the theoretical results.

\begin{table}[!ht]
\centering
\caption{Spatial convergence rates in the maximum-norm sense in Example \ref{eg_convergence}.}
\label{table_conv_space}
\small
\begin{tabular}{|*{5}{c|}}
\hline
\multirow{2}{*}{$\ds h=\frac{\pi}{8}$} & \multicolumn{2}{c|}{$\alpha=1$} & \multicolumn{2}{c|}{$\alpha=3$} \\
\cline{2-5}
~ & Error & Rate & Error & Rate \\
\hline\hline
$h$ & 1.554e-4 & $-$ & 1.258e-4 & $-$ \\
$h/2$ & 2.430e-5 & 2.6774 & 3.328e-5 & 1.9182 \\
$h/4$ & 4.491e-6 & 2.4358 & 7.441e-6 & 2.1608 \\
$h/8$ & 8.679e-7 & 2.3714 & 2.017e-6 & 1.8833 \\
$h/16$ & 2.068e-7 & 2.0694 & 4.701e-7 & 2.1011 \\
$h/32$ & 4.944e-8 & 2.0643 & 1.344e-7 & 1.8066 \\
$h/64$ & 6.590e-9 & 2.9075 & 3.483e-8 & 1.9479 \\
\hline
\end{tabular}
\end{table}

We also investigated the limit behaviors of the numerical solutions of \eqref{nonlocalAC} as $\delta\to0$.
By fixing $N=4096$ and $\tau=T$,
we calculated the numerical solutions of the NAC equation
obtained by the ETDRK2 scheme \eqref{nonlocalAC_ETDRK2} with various $\delta$'s
and compared them with the numerical solution of the LAC equation.
\tablename~\ref{table_conv_horizon} collects the errors between the nonlocal and local numerical solutions in the maximum-norm sense
and the second order convergence with respect to $\delta$ is obviously observed.

\begin{table}[!ht]
\centering
\caption{Rates of convergence to the local limits in the maximum-norm sense in Example \ref{eg_convergence}.}
\label{table_conv_horizon}
\small
\begin{tabular}{|*{5}{c|}}
\hline
\multirow{2}{*}{$\delta=0.2$} & \multicolumn{2}{c|}{$\alpha=1$} & \multicolumn{2}{c|}{$\alpha=3$} \\
\cline{2-5}
~ & Error & Rate & Error & Rate \\
\hline\hline
$\delta$ & 1.076e-5 & $-$ & 5.371e-6 & $-$ \\
$\delta/2$ & 2.703e-6 & 1.9927 & 1.344e-6 & 1.9991 \\
$\delta/4$ & 6.250e-7 & 2.1124 & 3.153e-7 & 2.0912 \\
$\delta/8$ & 1.580e-7 & 1.9835 & 6.373e-8 & 2.3068 \\
\hline
\end{tabular}
\end{table}

\subsection{Stability tests}

For the case $\rho_\delta(|\bs|)\in L^1(\R^2)$, i.e., $\alpha\in[0,2)$,
it has been proved in \cite{DuYa16} that
the steady state solution $u^*$ to the NAC equation \eqref{nonlocalAC} is continuous if $\eps^2C_\delta\ge1$, where
$$C_\delta=\int_{B_\delta(\bo)}\rho_\delta(|\bs|)\,\d\bs=\frac{4(4-\alpha)}{(2-\alpha)\delta^2}.$$
Under certain assumptions, if $\eps^2C_\delta<1$,
the locally increasing $u^*$ has a discontinuity at $x_*$ with the jump
\begin{equation}
\label{steady_state_jump}
\llbracket u^*\rrbracket(x_*)=2\sqrt{1-\eps^2C_\delta}.
\end{equation}

\begin{example}
\label{eg_stability}
We simulate the NAC equation \eqref{nonlocalAC}
with a random initial data  ranging from $-0.9$ to $0.9$ uniformly generated on the $512\times512$ mesh.
We set the interfacial parameter $\eps=0.1$ and adopt the kernel \eqref{frac_kernel} with $\alpha=1$ and various $\delta$'s.
For the comparison, we also simulate the LAC equation \eqref{localAC} with the same settings.
The time step is set to be $\tau=0.01$ for all cases.
\end{example}

Under these settings, the critical value of $\delta$ to satisfy $\eps^2C_\delta=1$ is $\delta_0=2\sqrt{3}\eps$.
The three rows in \figurename~\ref{fig_stability_1} correspond to
the evolutions of phase structures governed by the LAC equation and the NAC equation
with $\delta=3\eps$ and $\delta=4\eps$ at times $t=6$, $14$, $50$, and $180$, respectively.
\figurename~\ref{fig_stability_2} presents the evolutions of the corresponding maximum-norms
and the energies of the numerical solutions, respectively.
It is observed in all cases that the discrete maximum principle is preserved perfectly and the discrete energy decays monotonically.
It is easy to see that the dynamics of the NAC equation with $\delta=3\eps$ is quite similar to that of the LAC equation.
The evolution processes of these two cases reach the steady states at about $t=190$ and $t=370$, respectively,
while the evolution of the NAC equation with $\delta=4\eps$ lasts much longer time.
In addition, the NAC equation with $\delta=3\eps$ has thinner and sharper interface than the LAC equation
but wider interface than the NAC equation equation with $\delta=4\eps$.
Actually, the interface in the case $\delta=4\eps$ is discontinuous since the condition $\eps^2C_\delta<1$ holds.
The discontinuities in the solutions will be investigated further in the next example.

\begin{figure}[!ht]
\centerline{
\includegraphics[width=0.22\textwidth]{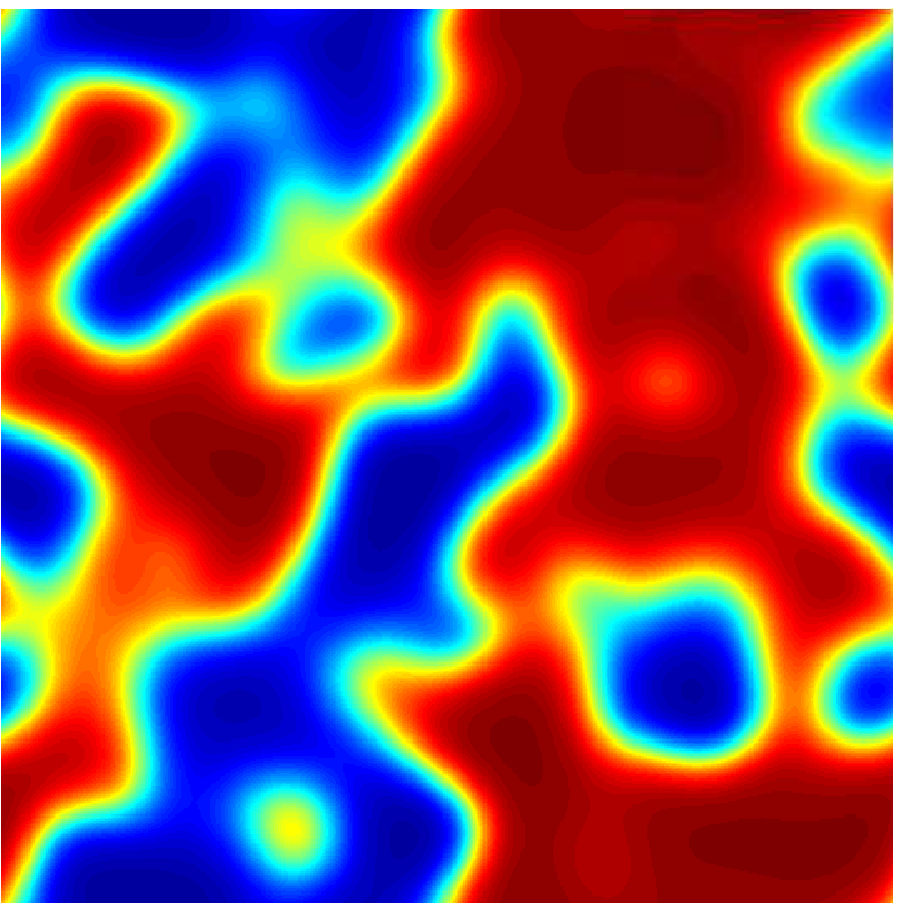}\hspace{0.05cm}
\includegraphics[width=0.22\textwidth]{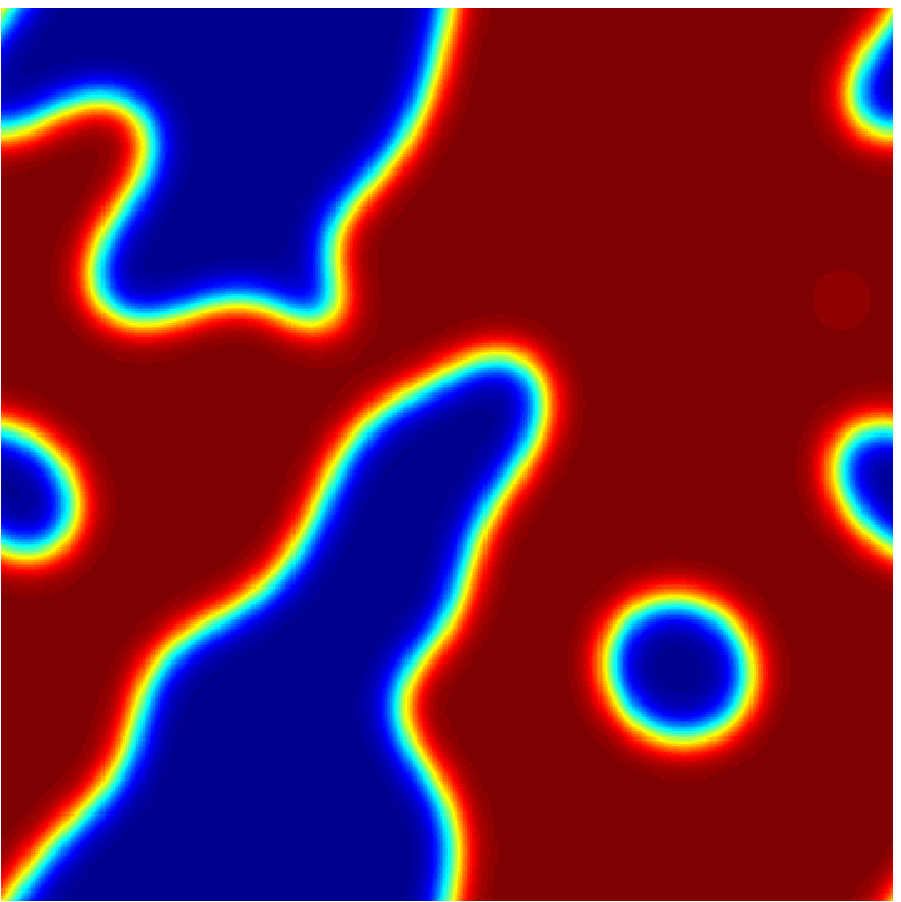}\hspace{0.05cm}
\includegraphics[width=0.22\textwidth]{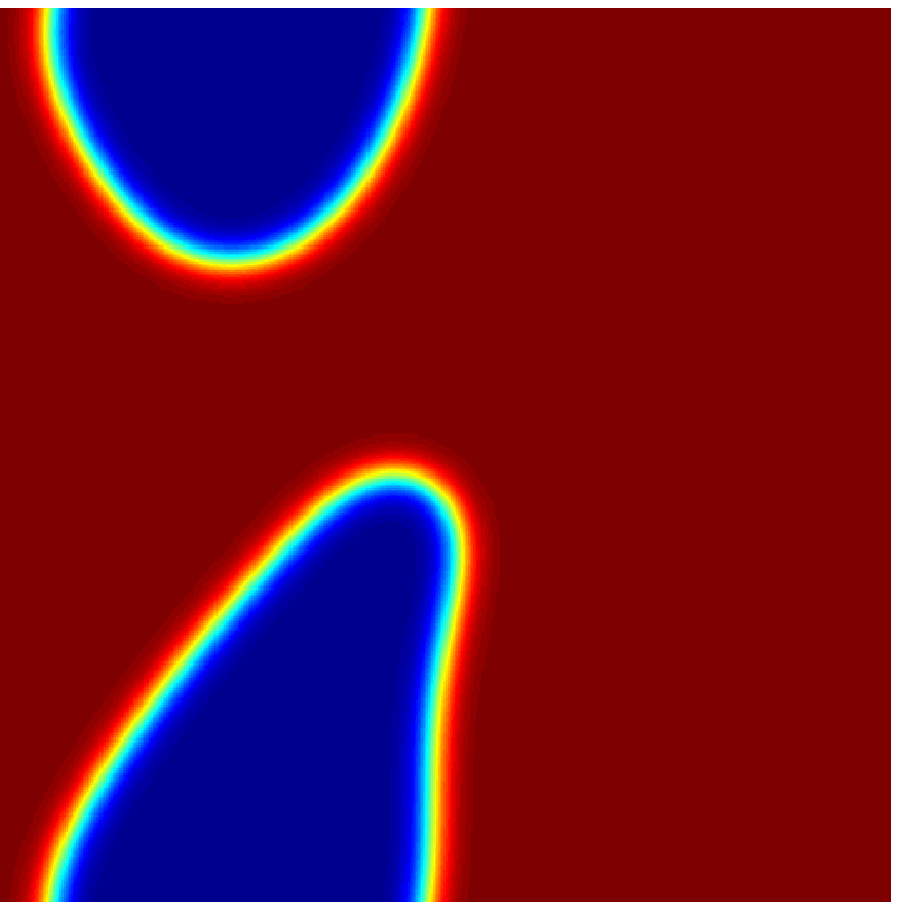}\hspace{0.05cm}
\includegraphics[width=0.22\textwidth]{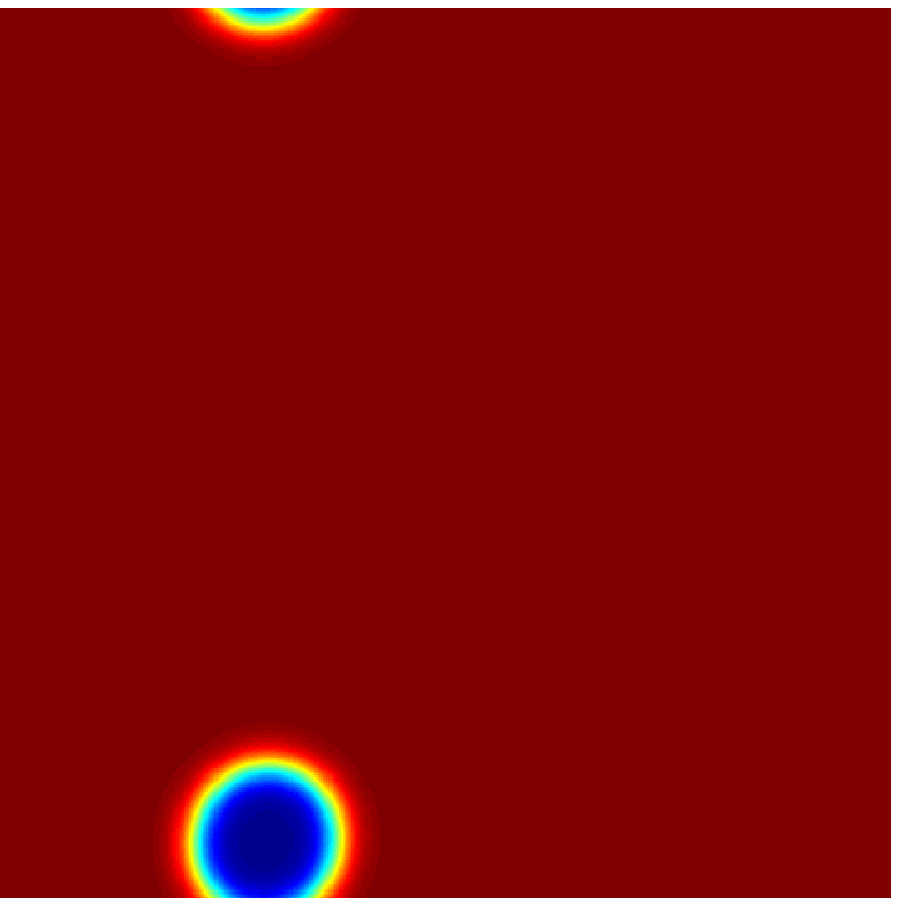}}\vspace{0.2cm}
\centerline{
\includegraphics[width=0.22\textwidth]{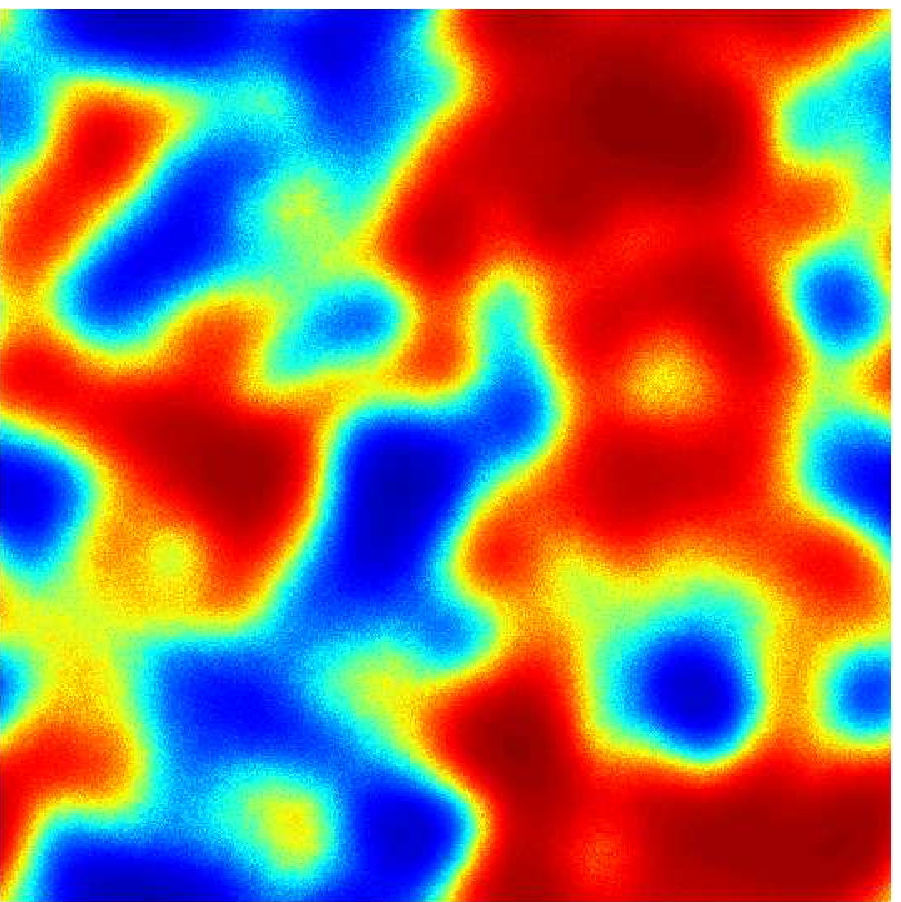}\hspace{0.05cm}
\includegraphics[width=0.22\textwidth]{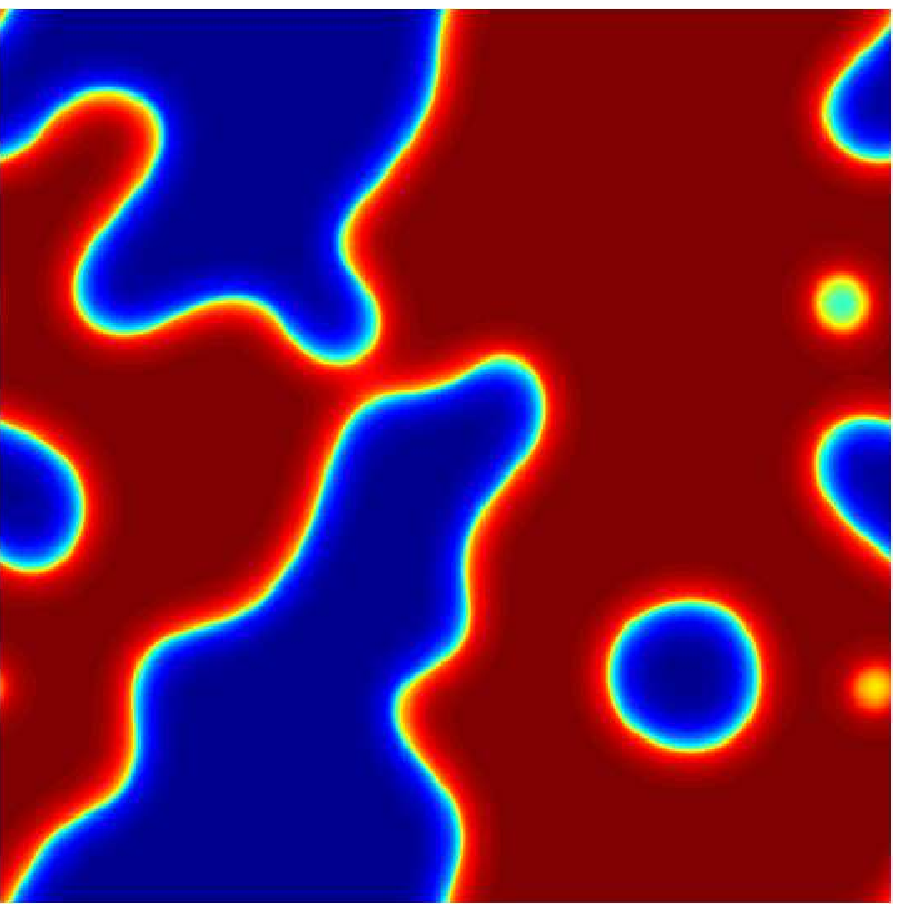}\hspace{0.05cm}
\includegraphics[width=0.22\textwidth]{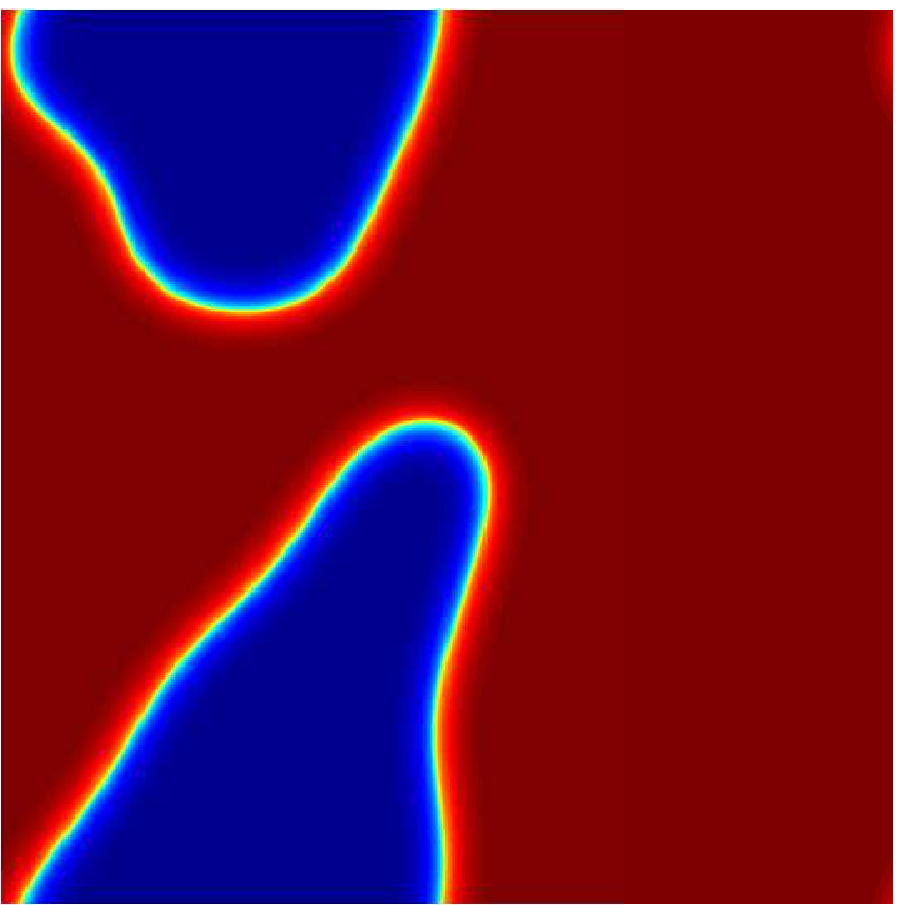}\hspace{0.05cm}
\includegraphics[width=0.22\textwidth]{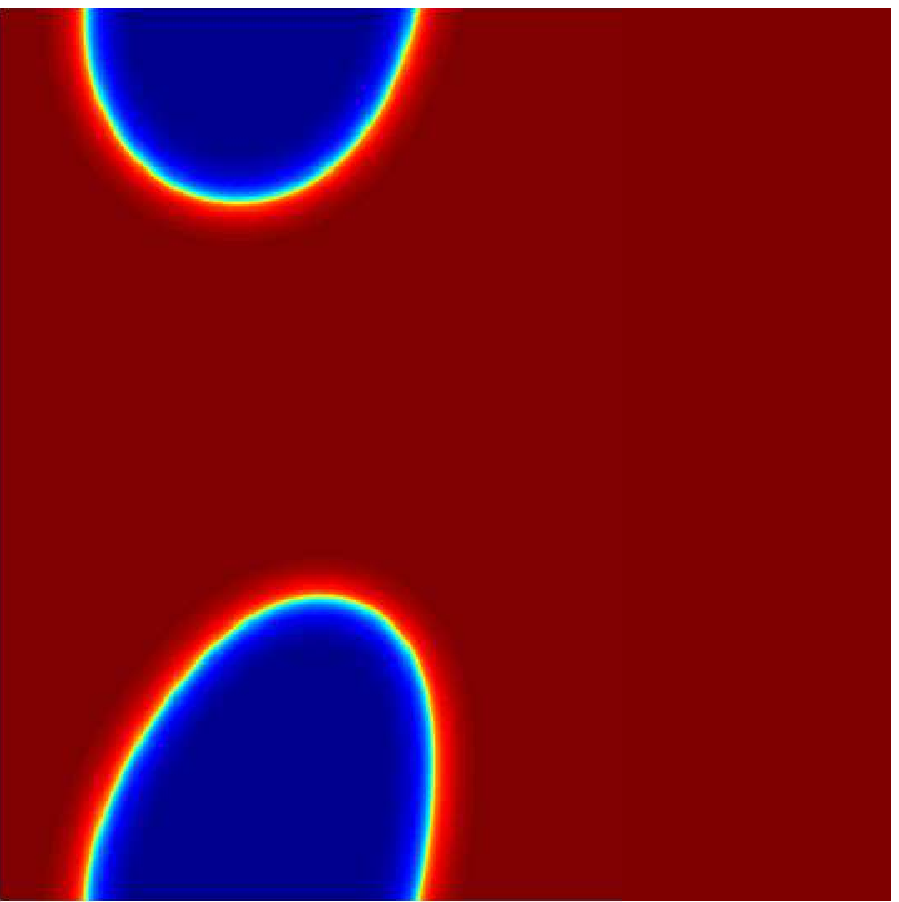}}\vspace{0.2cm}
\centerline{
\includegraphics[width=0.22\textwidth]{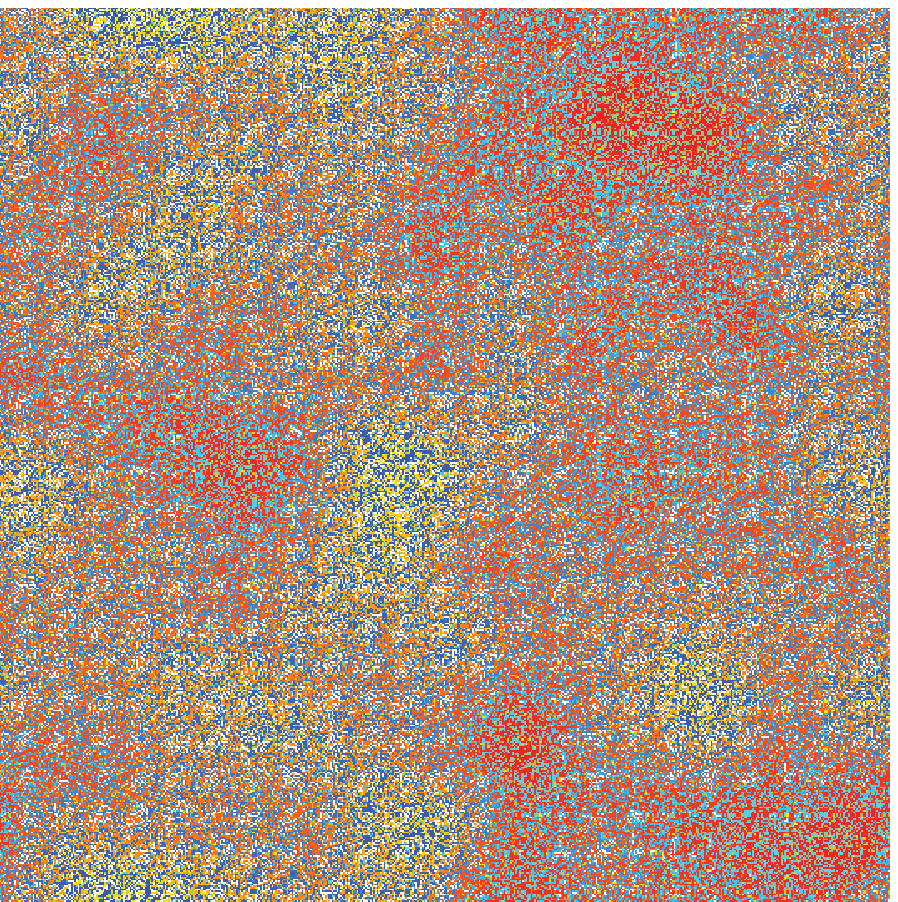}\hspace{0.05cm}
\includegraphics[width=0.22\textwidth]{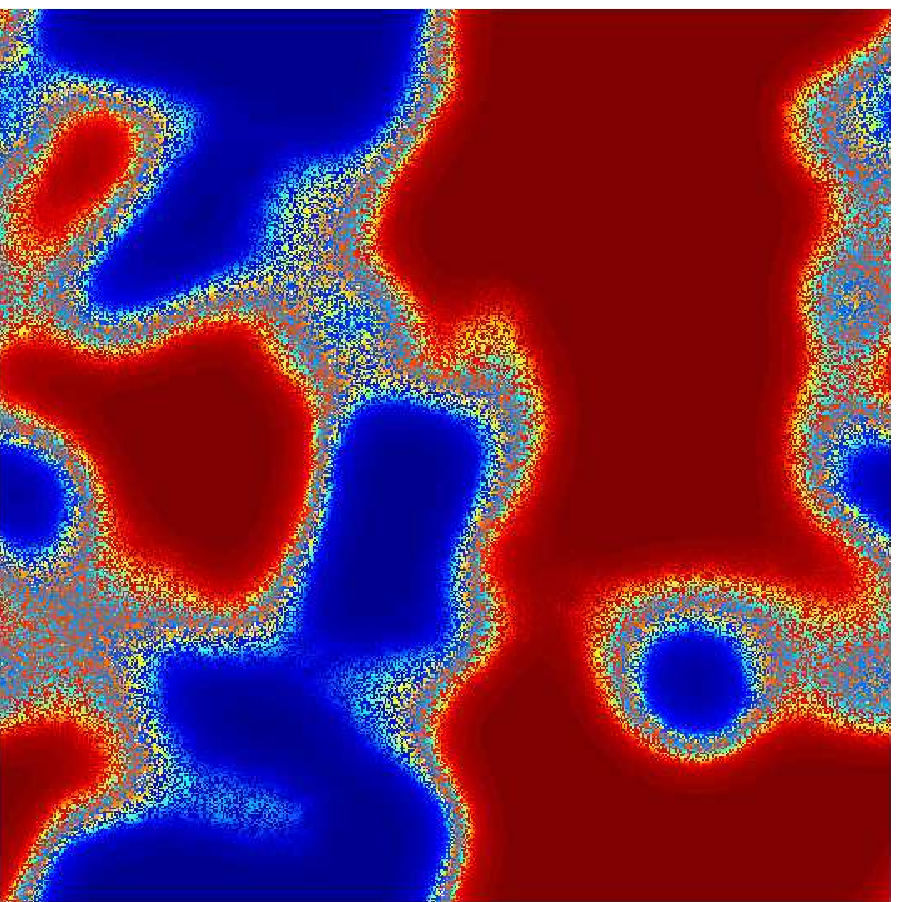}\hspace{0.05cm}
\includegraphics[width=0.22\textwidth]{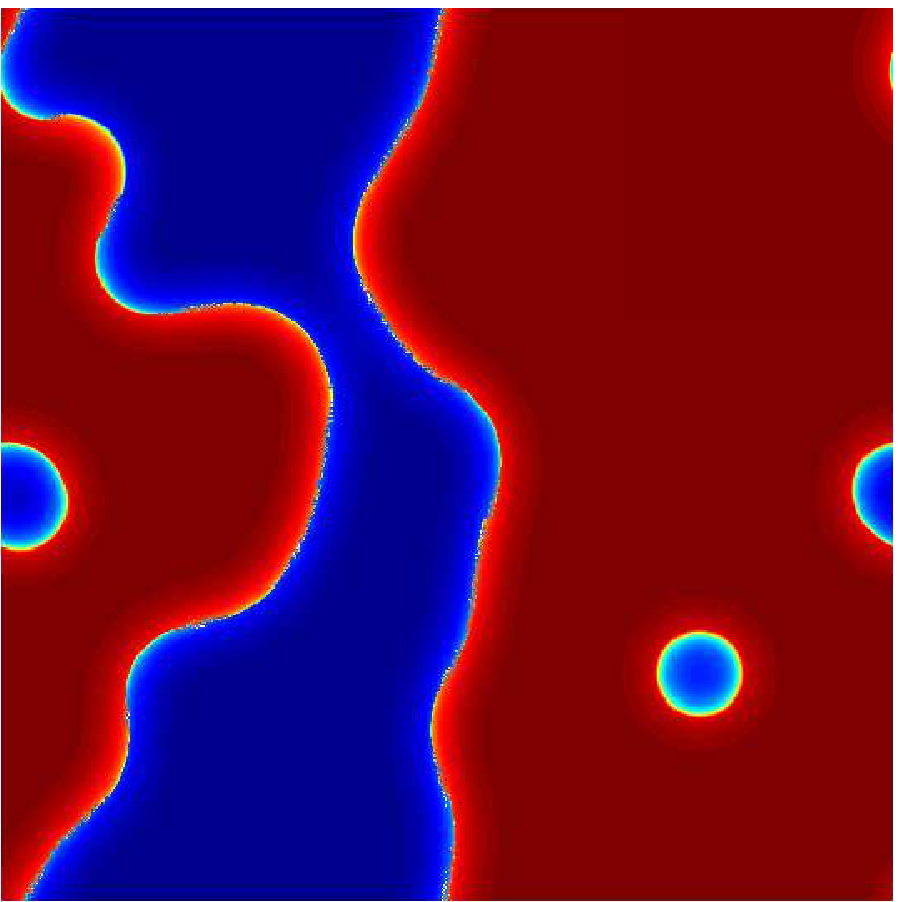}\hspace{0.05cm}
\includegraphics[width=0.22\textwidth]{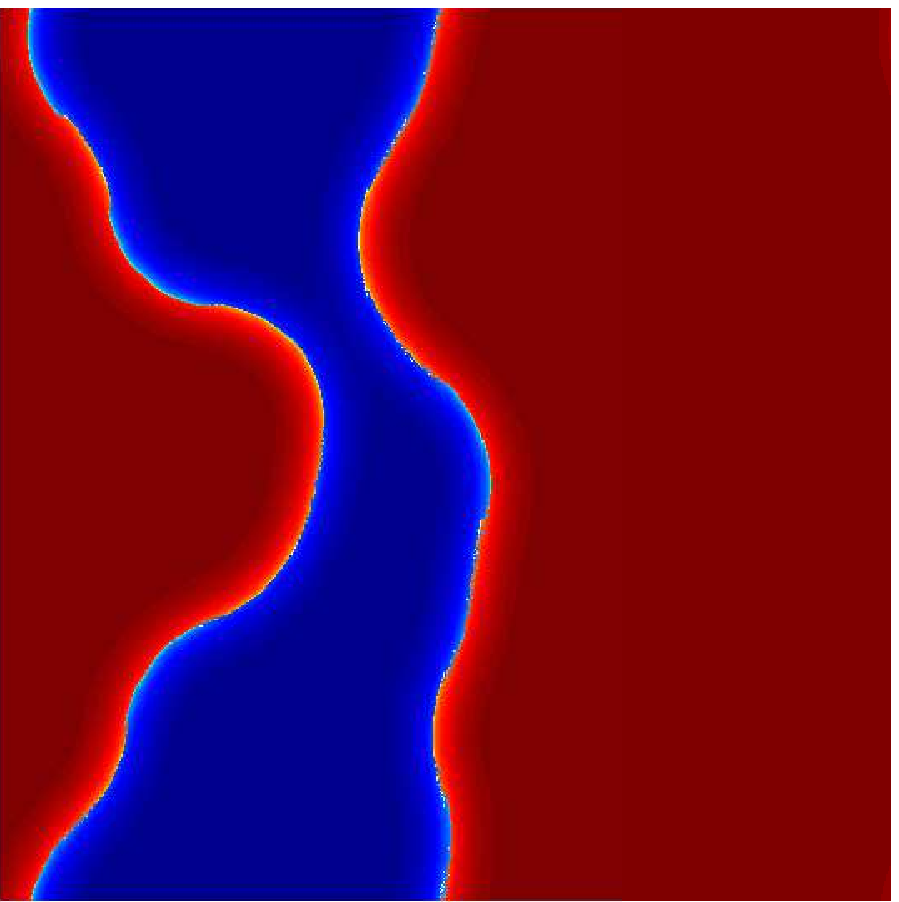}}
\caption{Evolutions of phase structures governed by the LAC equation (top row)
and the NAC equation with $\delta=3\eps$ (middle row) and $\delta=4\eps$ (bottom row) in Example \ref{eg_stability}.
From left to right: $t=6,14,50,180$.}
\label{fig_stability_1}
\end{figure}

\begin{figure}[!ht]
\centerline{
{\includegraphics[width=0.33\textwidth]{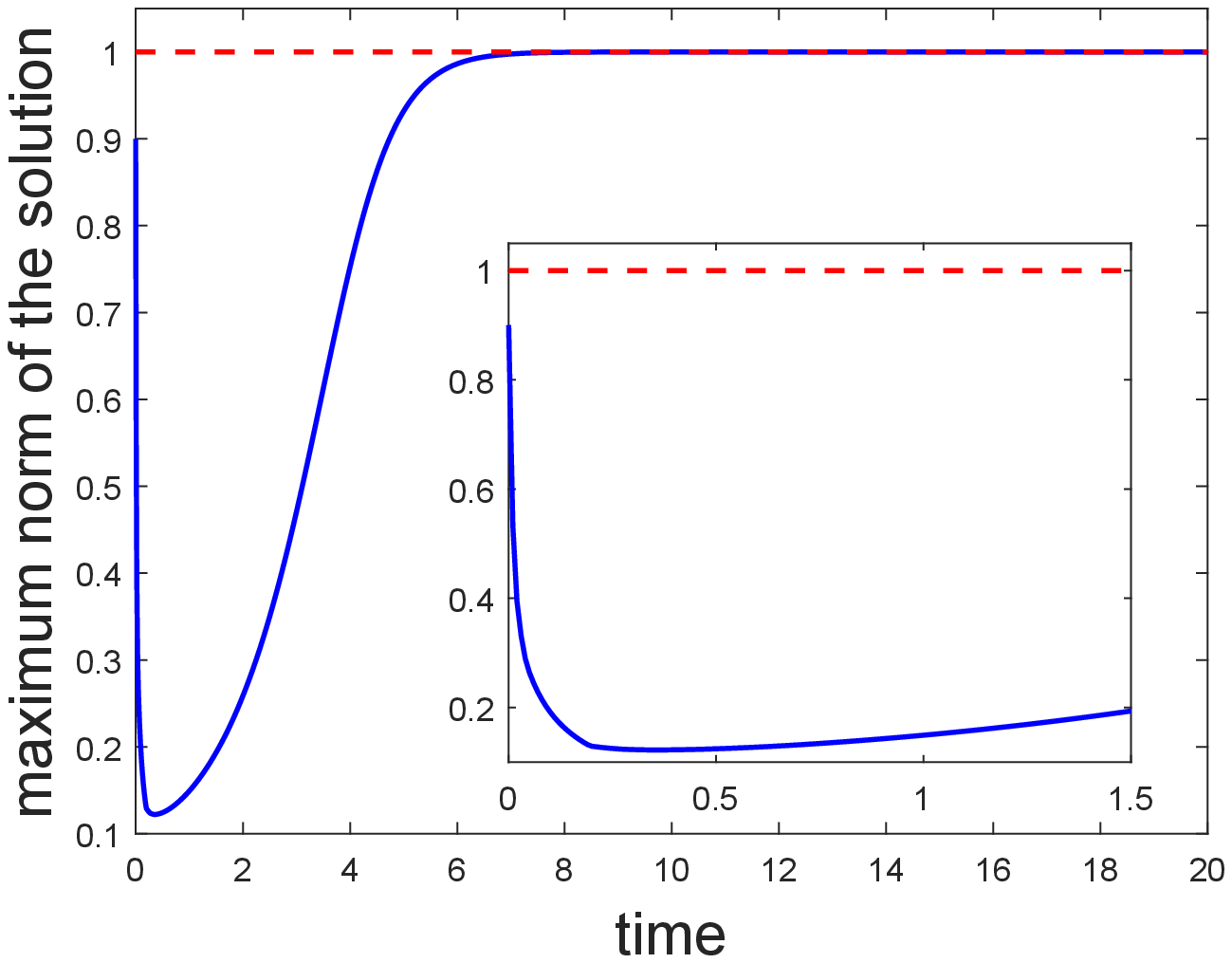}}
{\includegraphics[width=0.33\textwidth]{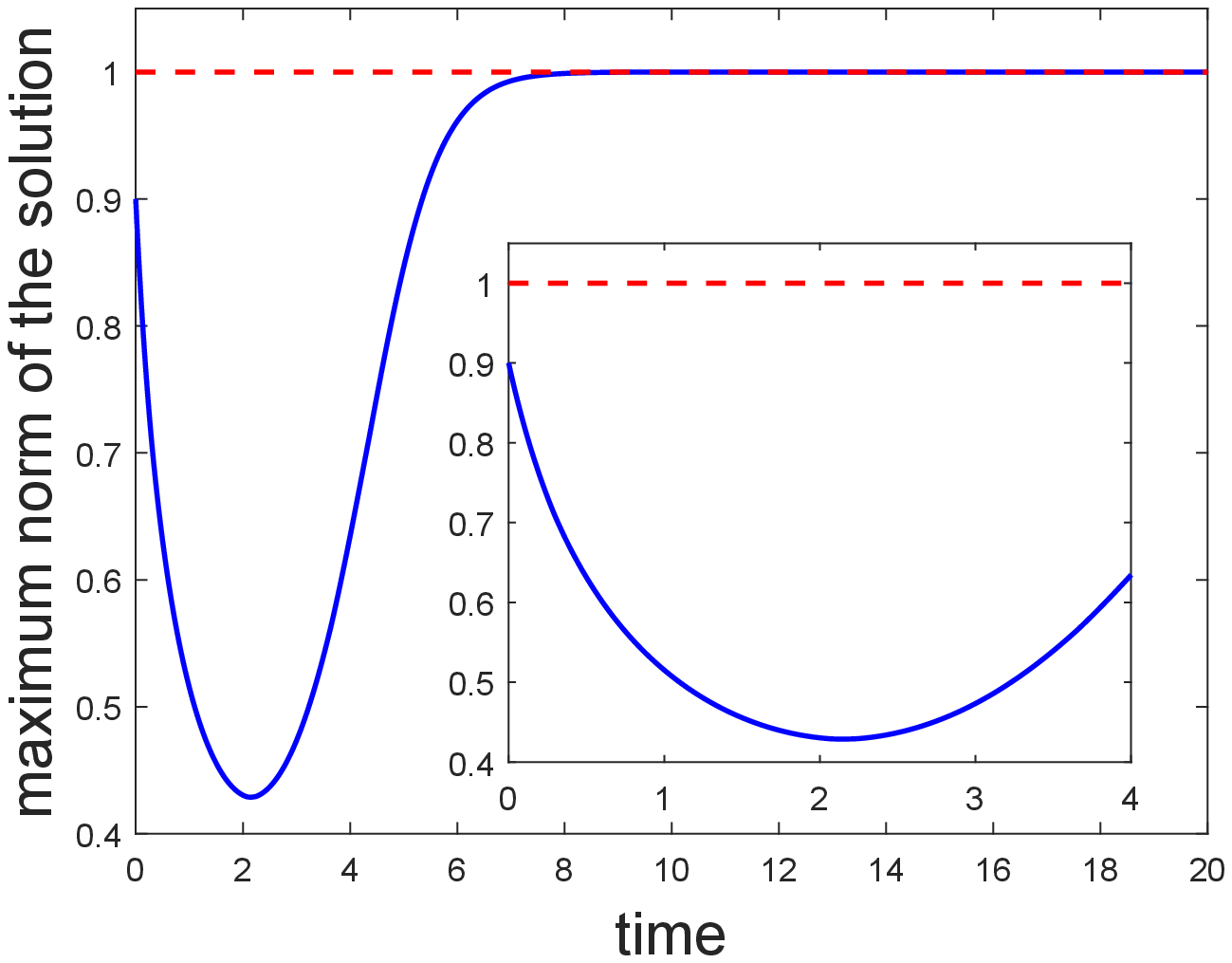}}
{\includegraphics[width=0.33\textwidth]{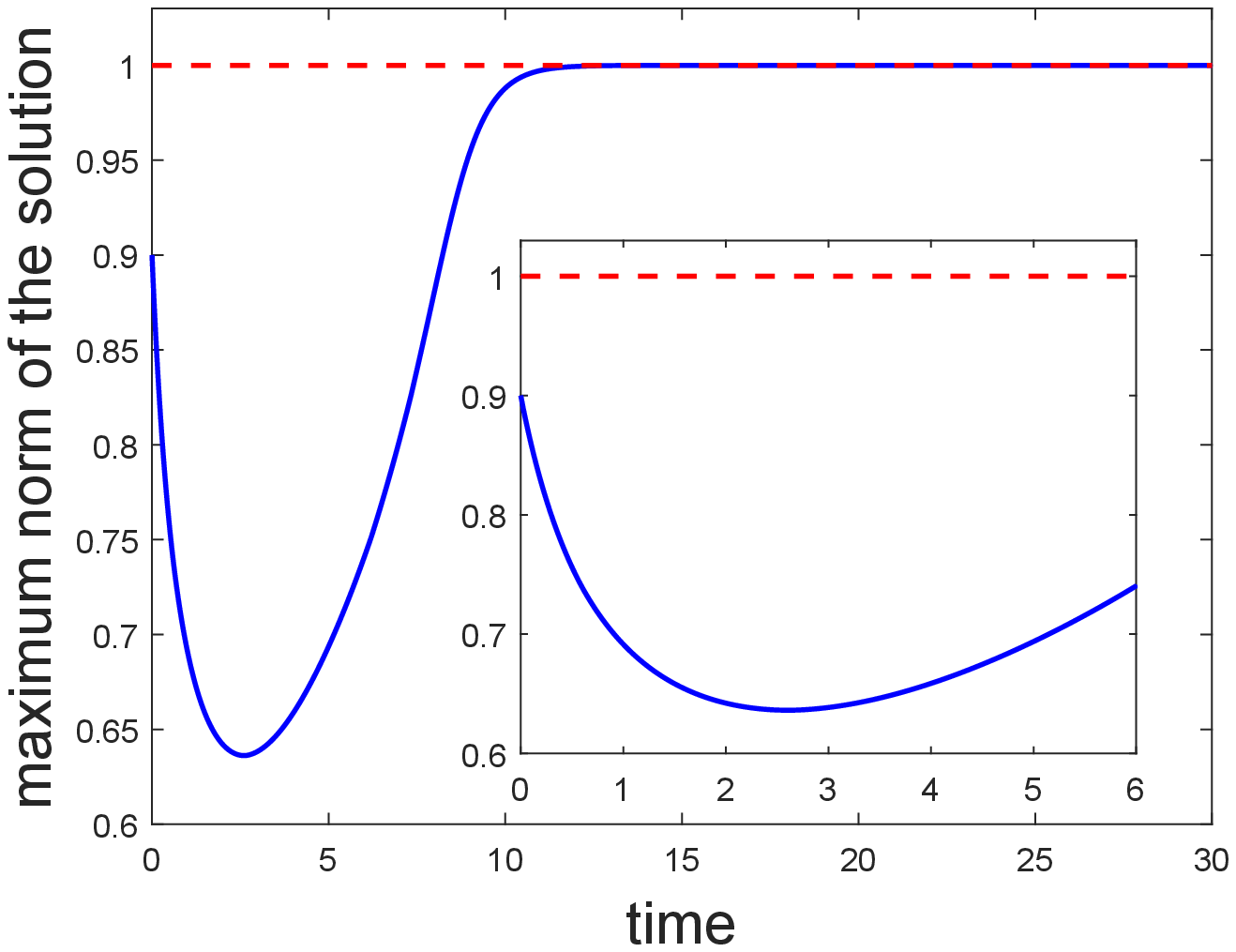}}}
\centerline{
{\includegraphics[width=0.33\textwidth]{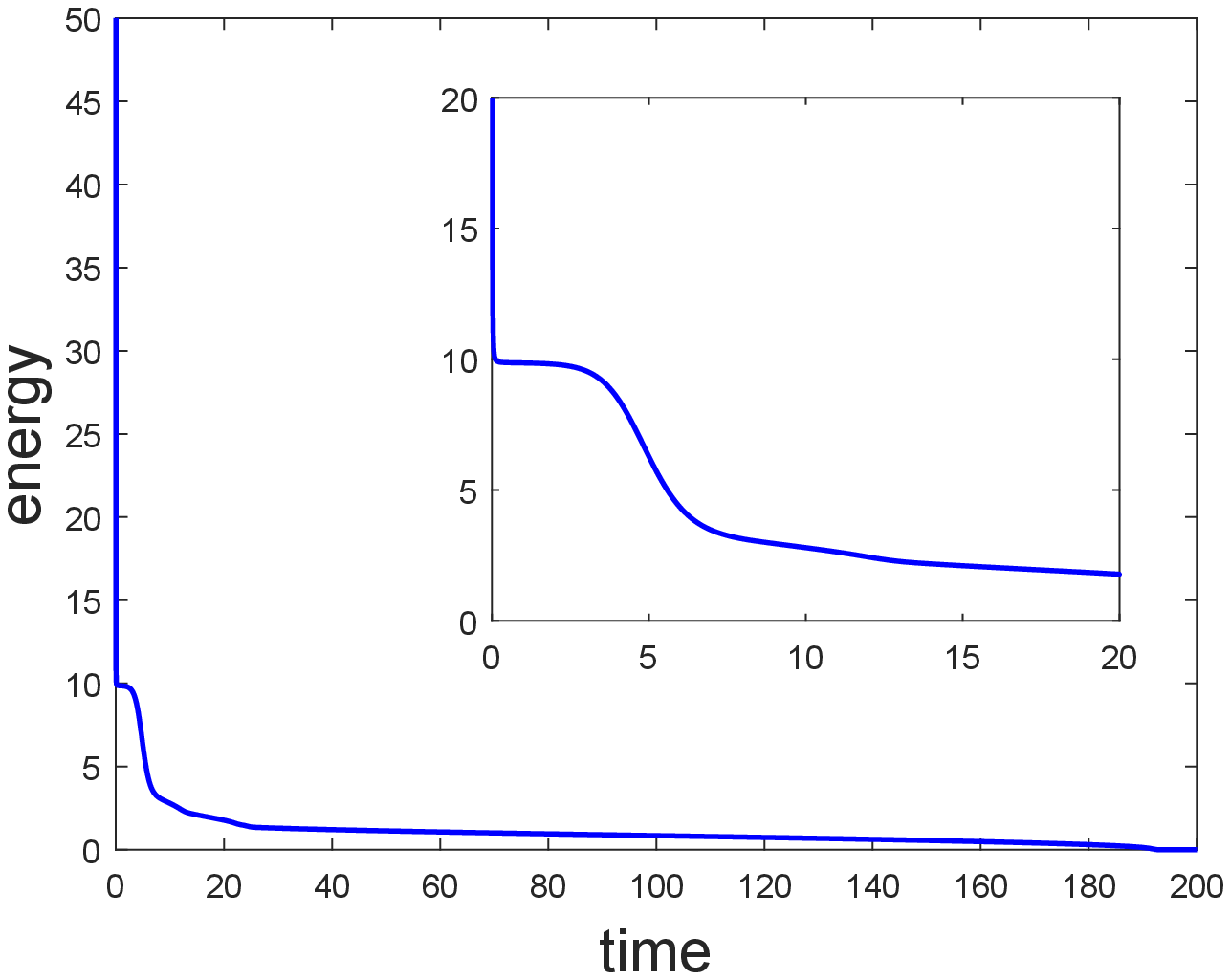}}
{\includegraphics[width=0.33\textwidth]{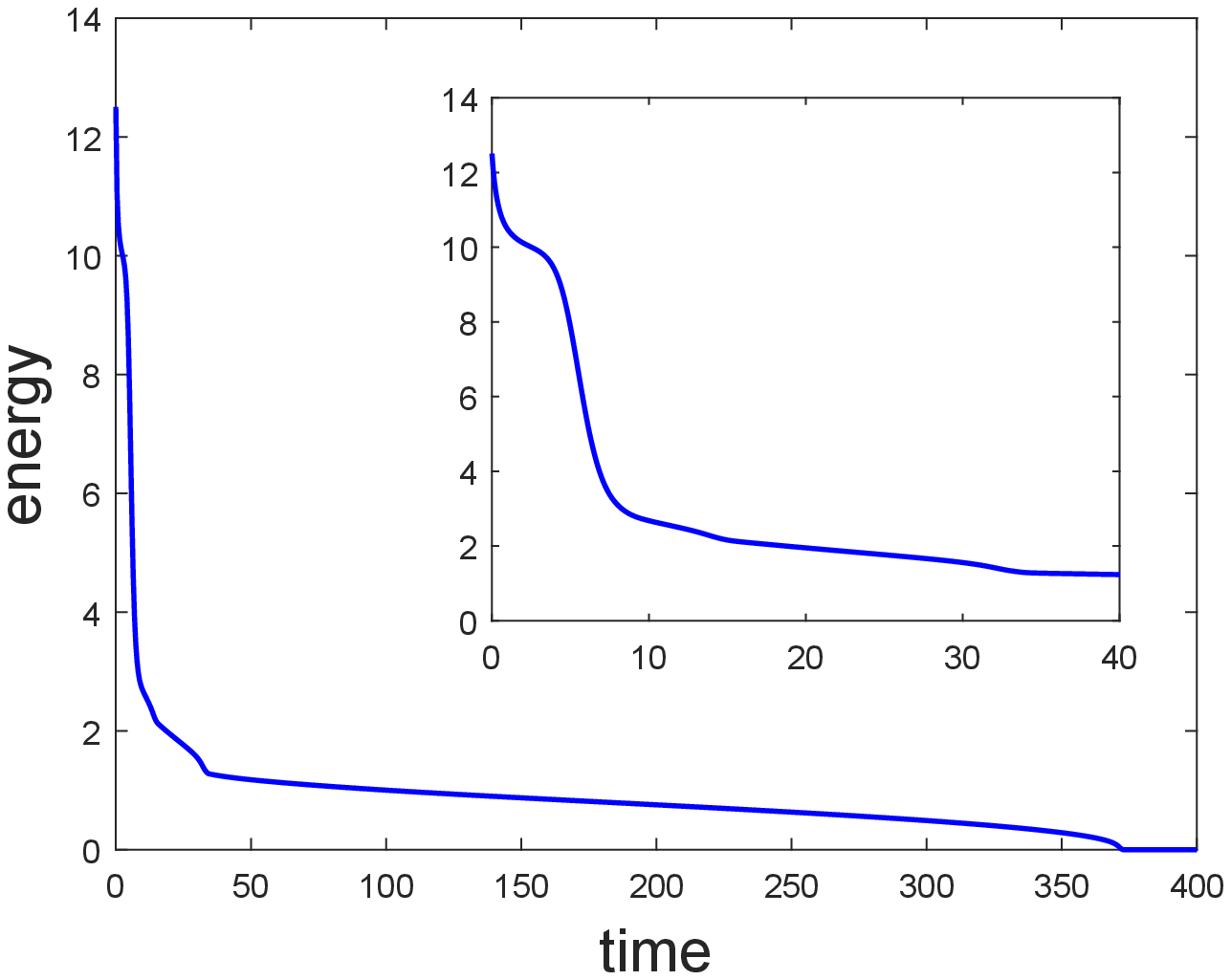}}
{\includegraphics[width=0.33\textwidth]{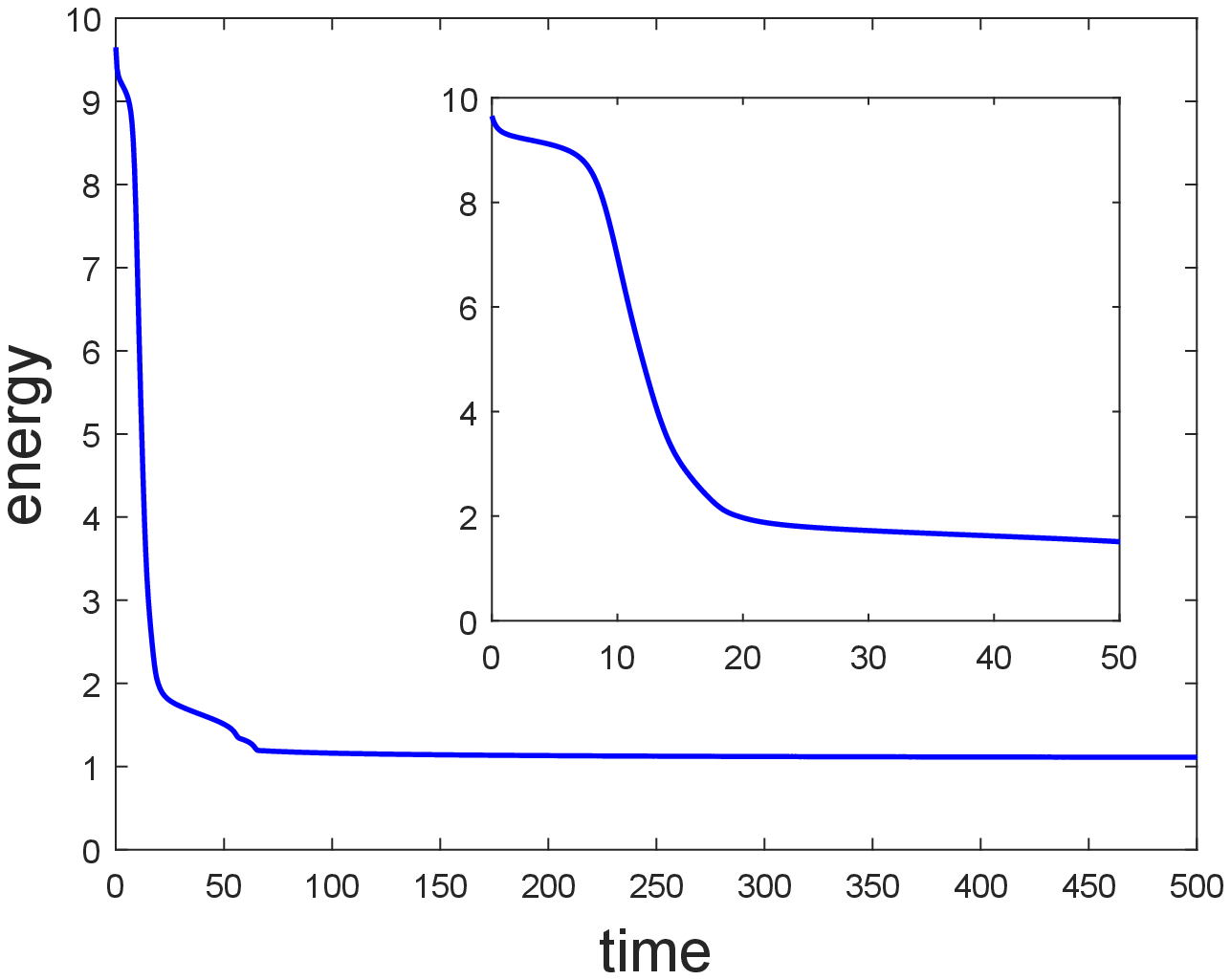}}}
\caption{Evolutions of the maximum norms (top row) and the energies of the numerical solutions in Example \ref{eg_stability}.
From left to right: governed by the LAC equation and the NAC equation with $\delta=3\eps$ and $\delta=4\eps$.}
\label{fig_stability_2}
\end{figure}

\subsection{Discontinuity in the steady state solution}
~

\begin{example}
\label{eg_bubble}
We simulate the evolution of a bubble governed by the NAC equation \eqref{nonlocalAC}
starting with a smooth initial configuration (See \figurename~\ref{fig_bubble_initial}).
Again, we set the interfacial parameter $\eps=0.1$ and adopt the kernel \eqref{frac_kernel} with $\alpha=1$ and various $\delta$'s.
The parameters of space-time mesh are set to be $\tau=0.01$ and $N=2048$ for all cases.
\end{example}

\begin{figure}[!htp]
\centerline{
{\includegraphics[width=0.45\textwidth]{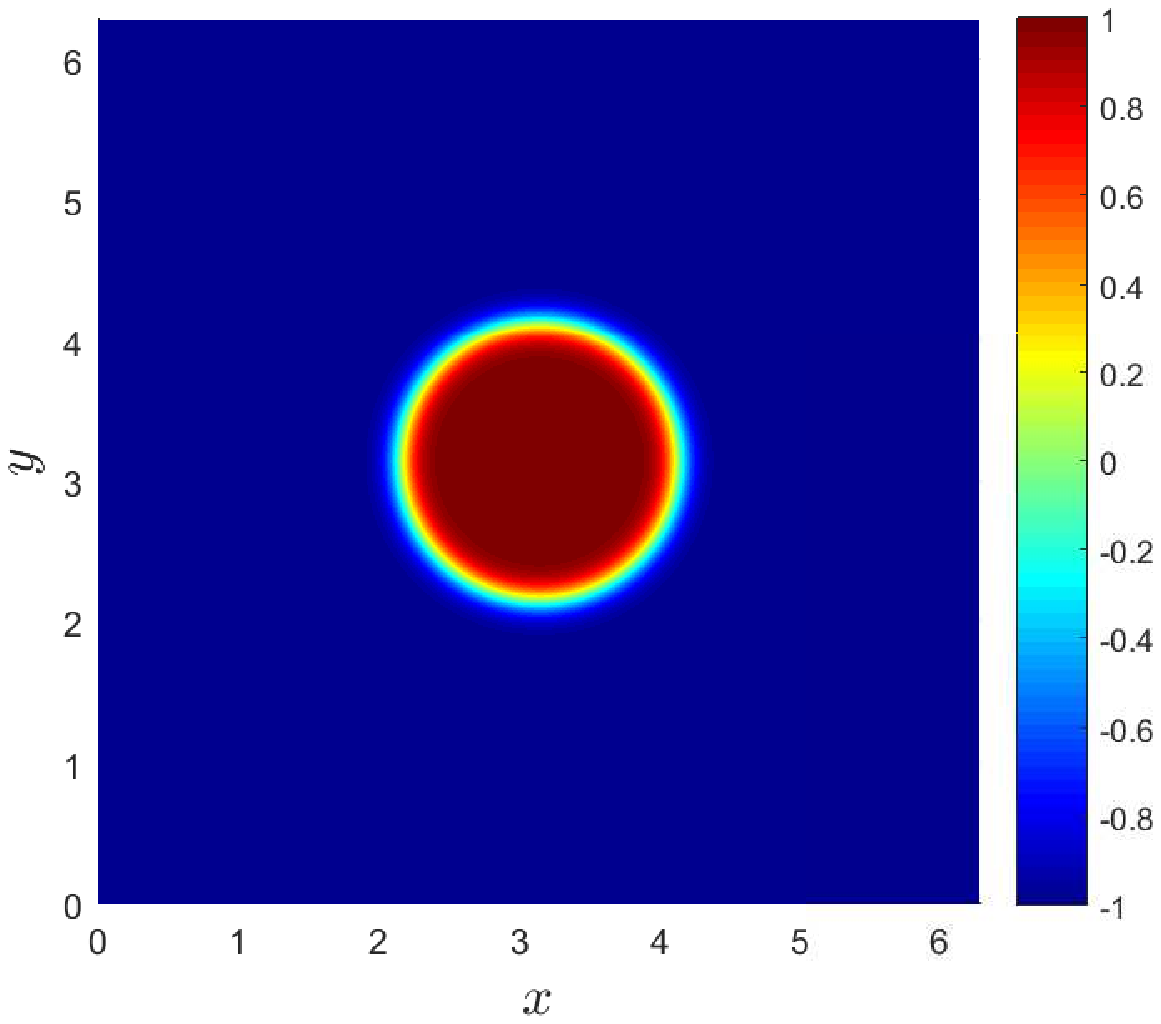}}
{\includegraphics[width=0.45\textwidth]{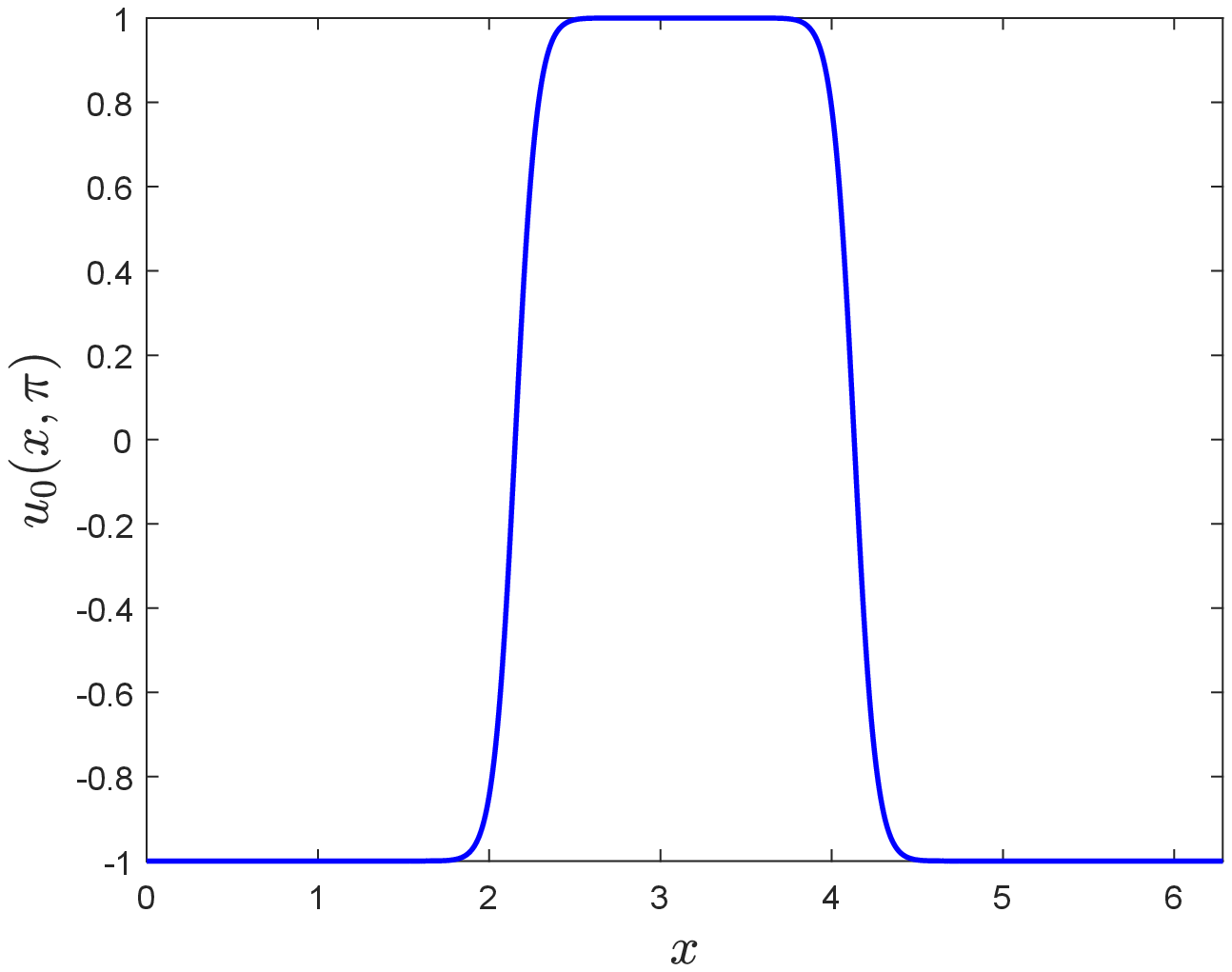}}}
\caption{Initial configuration of Example \ref{eg_bubble}.
Left: surface-project view; right: cross-section view at $y=\pi$.}
\label{fig_bubble_initial}
\end{figure}

This example is devoted to the relationship
between the discontinuities in the steady state solutions and the horizon parameter $\delta$.
Under the settings of the parameters given above, it is known from \eqref{steady_state_jump} that
the theoretical values of the jumps occurring at the discontinuity points can be formulated as
$$\text{Theoretical jump}=2\sqrt{1-\frac{0.12}{\delta^2}},\quad\delta>\delta_0=\sqrt{0.12}\approx0.3464.$$
We chose several $\delta$'s ($\delta=0.8,1.6,3.2$) larger than $\delta_0$ to observe the discontinuities
and the jumps in the numerical results,
and for the comparison, we also considered one case ($\delta=0.2$) with $\delta$ smaller than the critical value.
\tablename~\ref{table_bubble_jump} collects the theoretical and numerically computed jumps
occurring at the discontinuity points in the steady state solutions with various $\delta$'s.
It is observed that the numerical jumps match the theoretical values very well.

\begin{table}[!ht]
\centering
\caption{Theoretical and numerical jumps in steady state solutions in Example \ref{eg_bubble}.}
\label{table_bubble_jump}
\small
\begin{tabular}{|*{5}{c|}}
\hline
~ & $\delta=0.2$ & $\delta=0.8$ & $\delta=1.6$ & $\delta=3.2$ \\
\hline\hline
Theoretical jumps & 0 & 1.802776 & 1.952562 & 1.988247 \\
\hline
Numerical jumps & 0 & 1.804496 & 1.952713 & 1.988242 \\
\hline
\end{tabular}
\end{table}

\figurename~\ref{fig_bubbles} presents the evolutions of the bubble governed by the NAC equation with $\delta=0.2$ ($<\delta_0$), $\delta=0.8$ and $\delta=3.2$ (both $>\delta_0$), respectively.
In each row, the first three graphs give the surface-projection views  of the numerical solutions at several times
and the last graph cross-section views with $y=\pi$ by zooming-in around the interface.
For the case $\delta=0.2$, the bubble shrinks quickly and disappears finally,
which is similar to the process of the shrinkage occurring in the case of the LAC equation (see \cite{ChSh98}).
The evolutions for the cases $\delta=0.8$ and $\delta=3.2$ are similar:
the bubble does not shrink and the interface turns sharper and sharper so that
the solution preforms discontinuity on the interface after some times
and reaches the steady state with the expected jump.
It is seen from this example that
the NAC equation  with small $\delta$ has more similar dynamics with the local model,
which is consistent with the observations in Example \ref{eg_stability},
while the NAC equation  with large $\delta$, especially larger than $\delta_0$,
leads to the steady state solution within the discontinuity even though the initial state is smooth.

\begin{figure}[!ht]
\centering
\subfigure[$\delta=0.2$: at $t=1$, $40$, and $55$, and their cross-sections with $y=\pi$ and $x\in{[\frac{\pi}{2},\pi]}$]
{\includegraphics[width=0.22\textwidth]{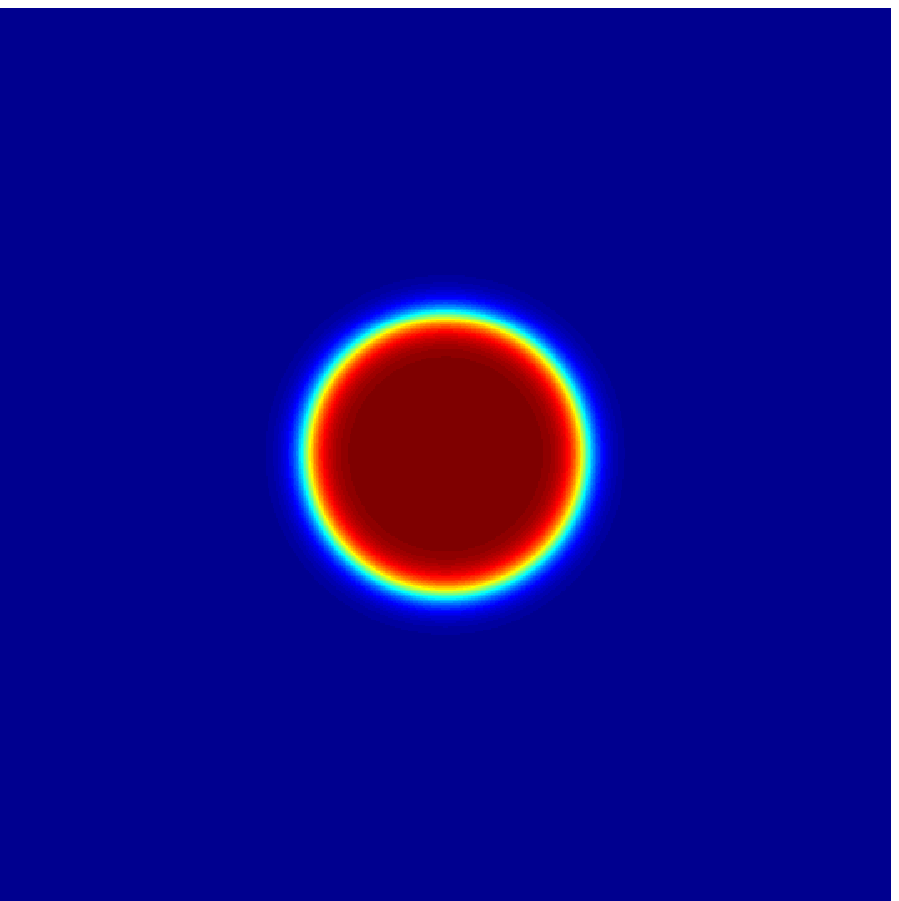}
\includegraphics[width=0.22\textwidth]{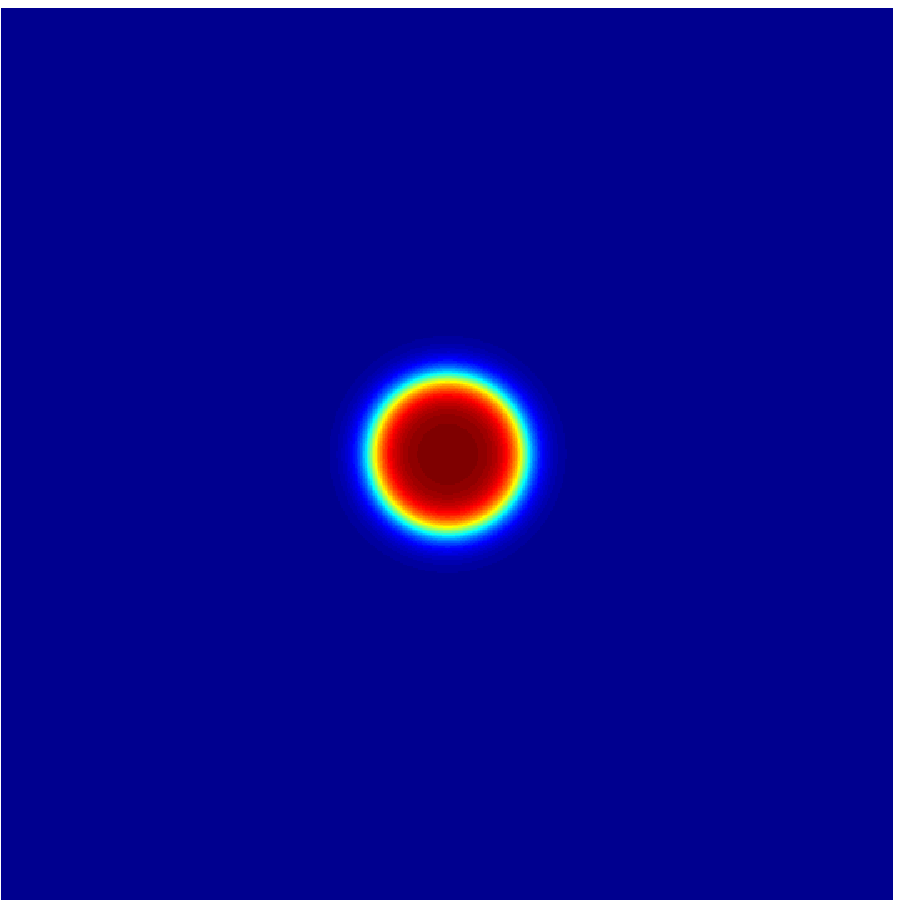}
\includegraphics[width=0.22\textwidth]{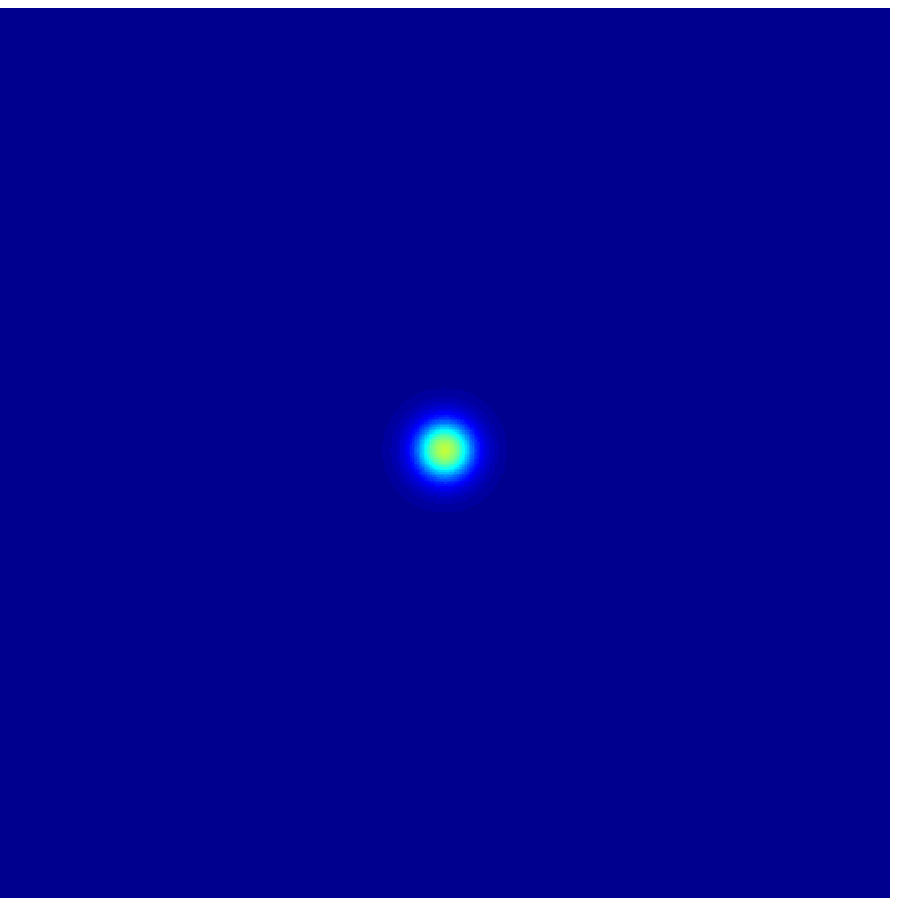}\hspace{0.2cm}
\includegraphics[width=0.3\textwidth]{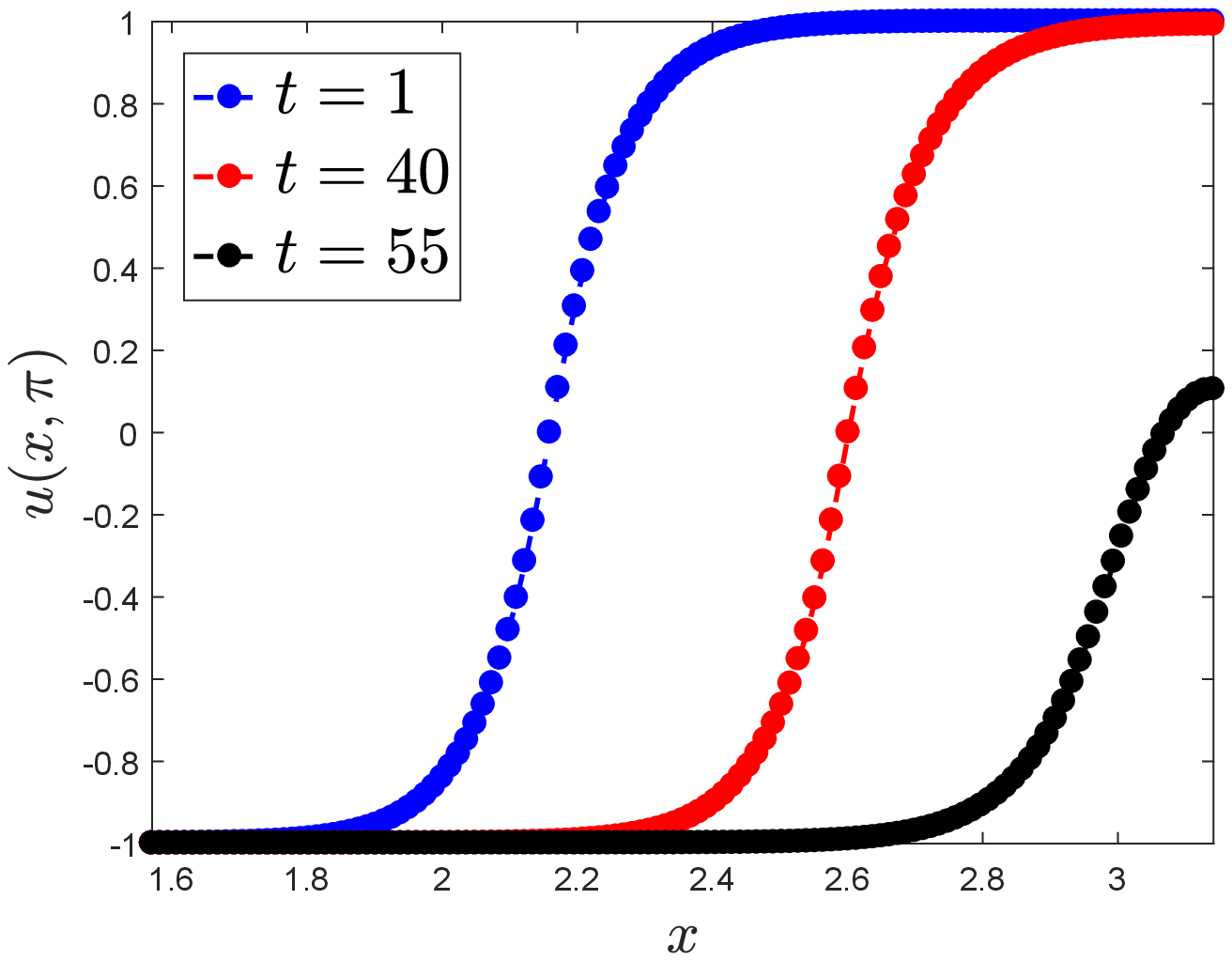}}
\subfigure[$\delta=0.8$:  at $t=1$, $3$, and $20$, and their cross-sections with $y=\pi$ and $x\in{[2.05,2.35]}$]
{\includegraphics[width=0.22\textwidth]{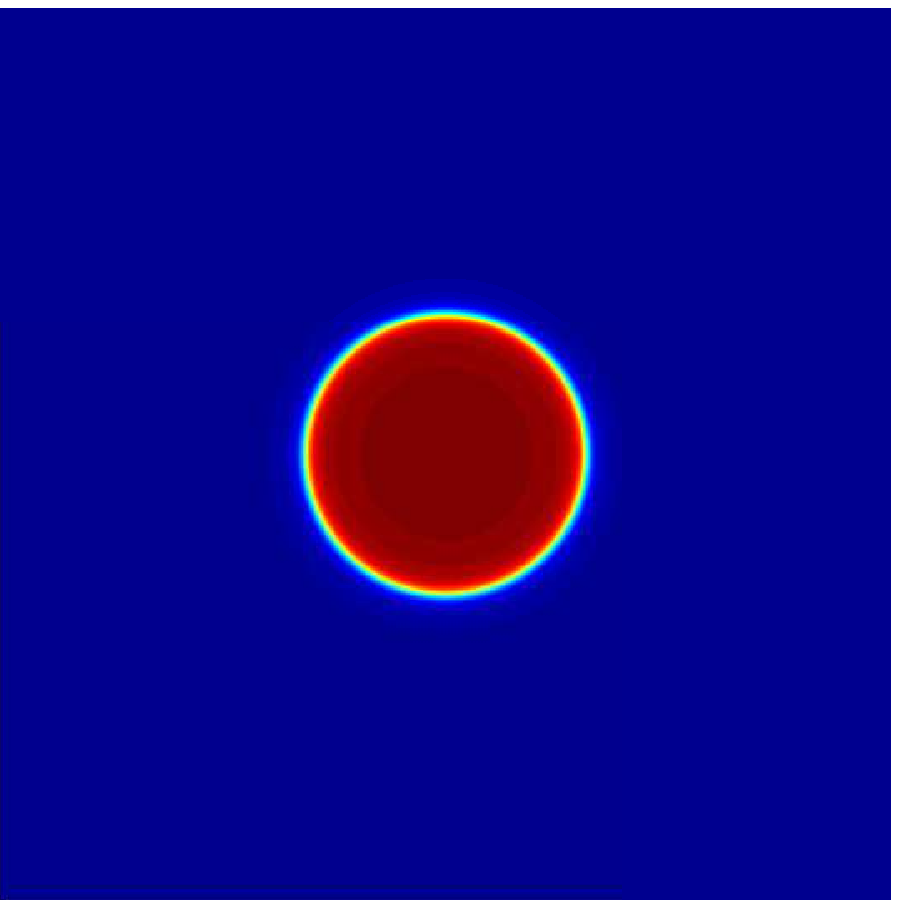}
\includegraphics[width=0.22\textwidth]{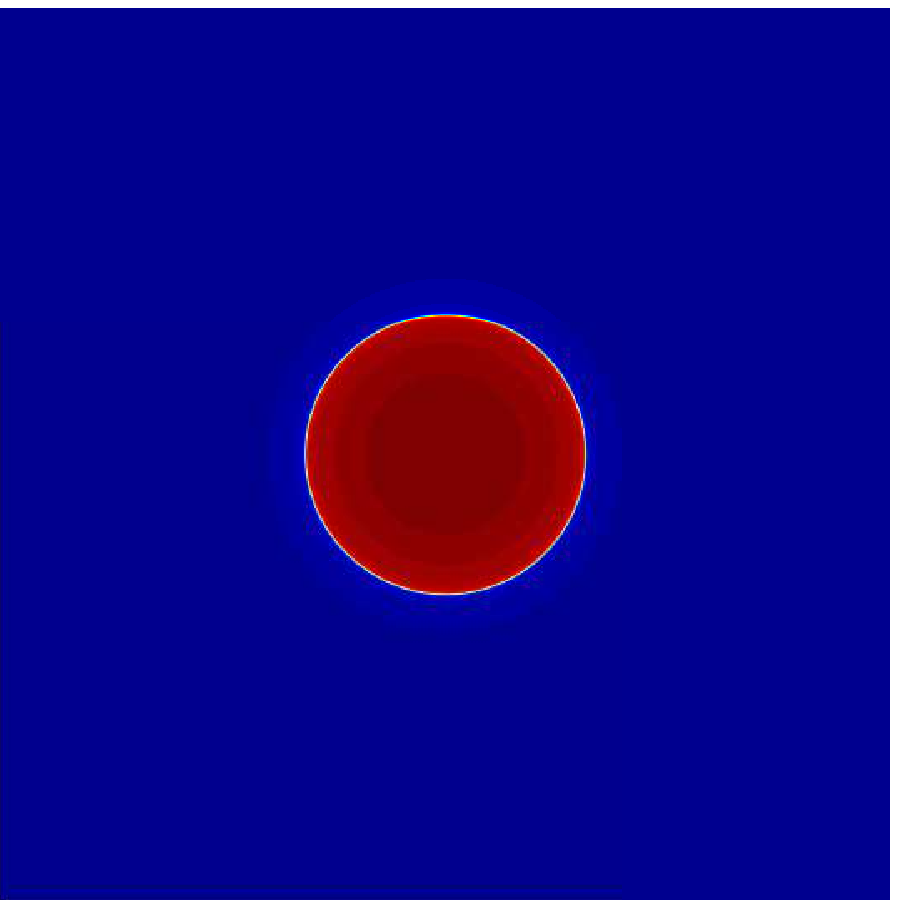}
\includegraphics[width=0.22\textwidth]{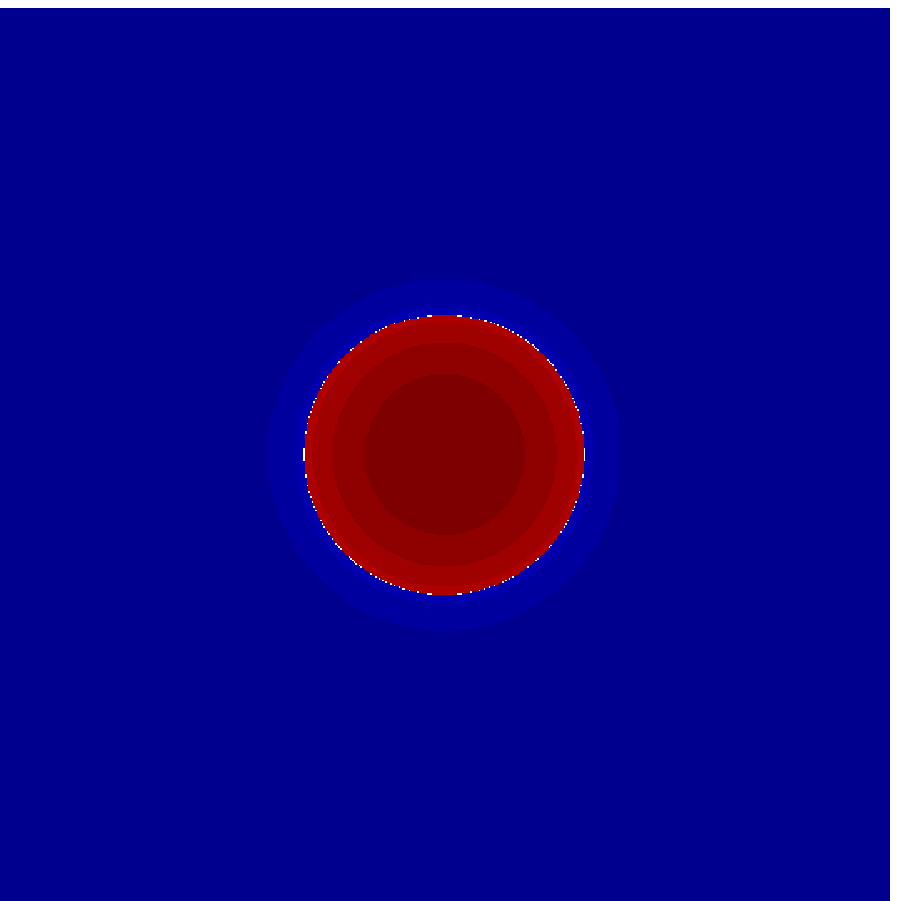}\hspace{0.2cm}
\includegraphics[width=0.3\textwidth]{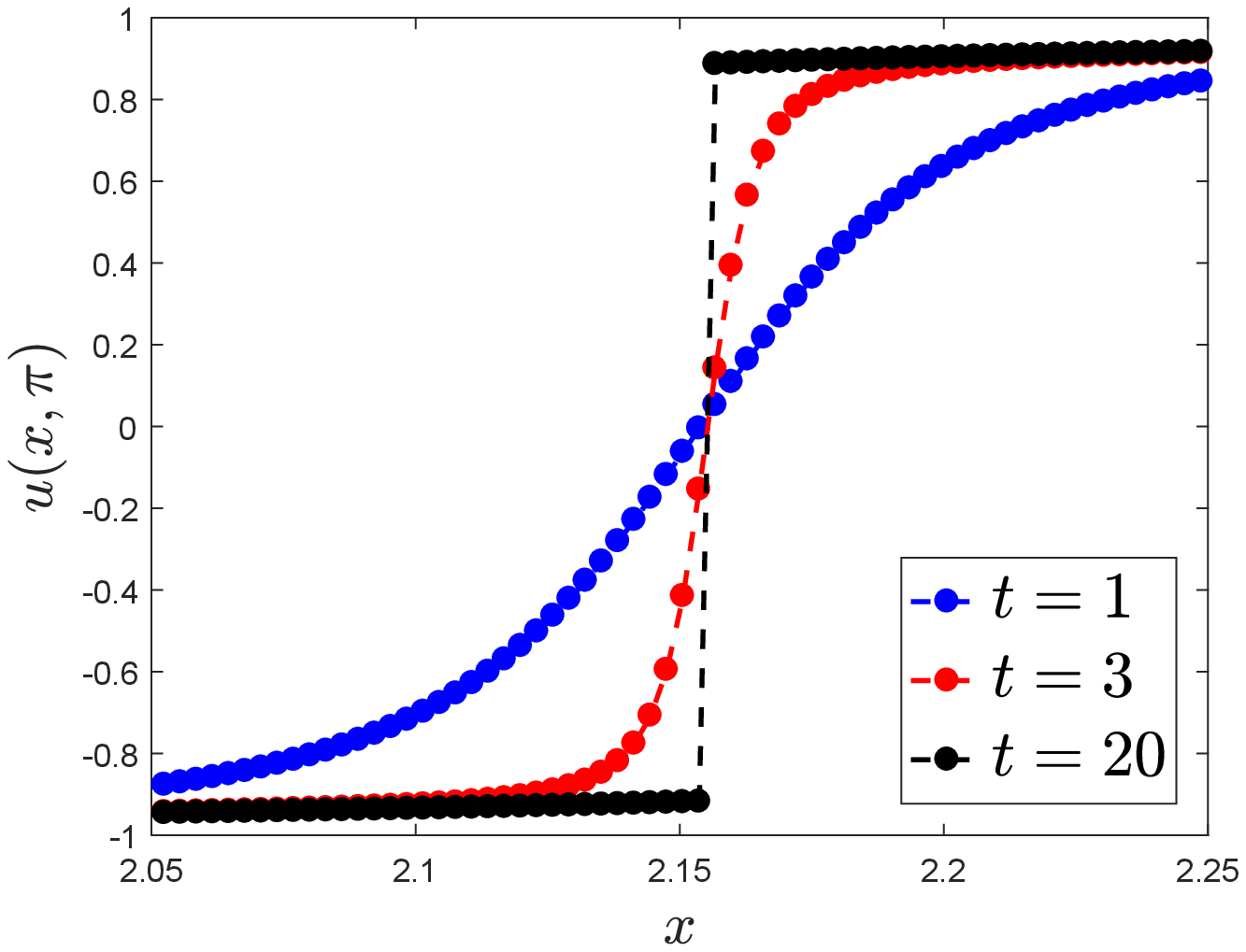}}
\subfigure[$\delta=3.2$:  at $t=1$, $3$, and $20$, and their cross-sections with $y=\pi$ and $x\in{[2.05,2.35]}$]
{\includegraphics[width=0.22\textwidth]{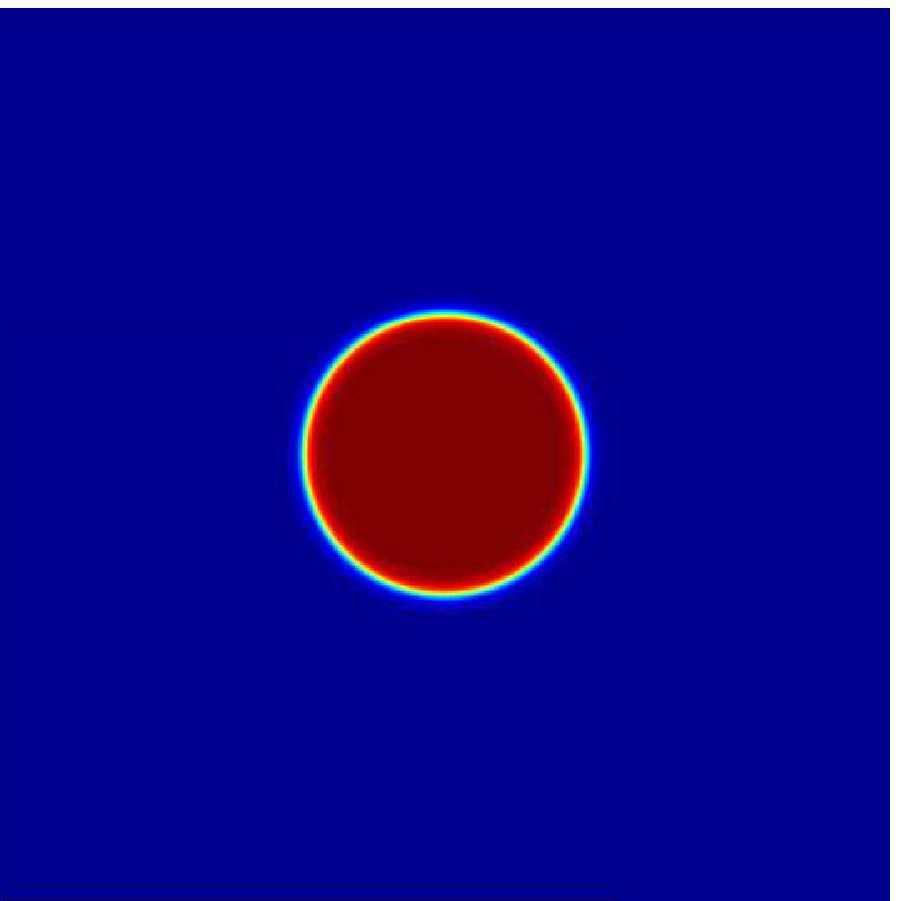}
\includegraphics[width=0.22\textwidth]{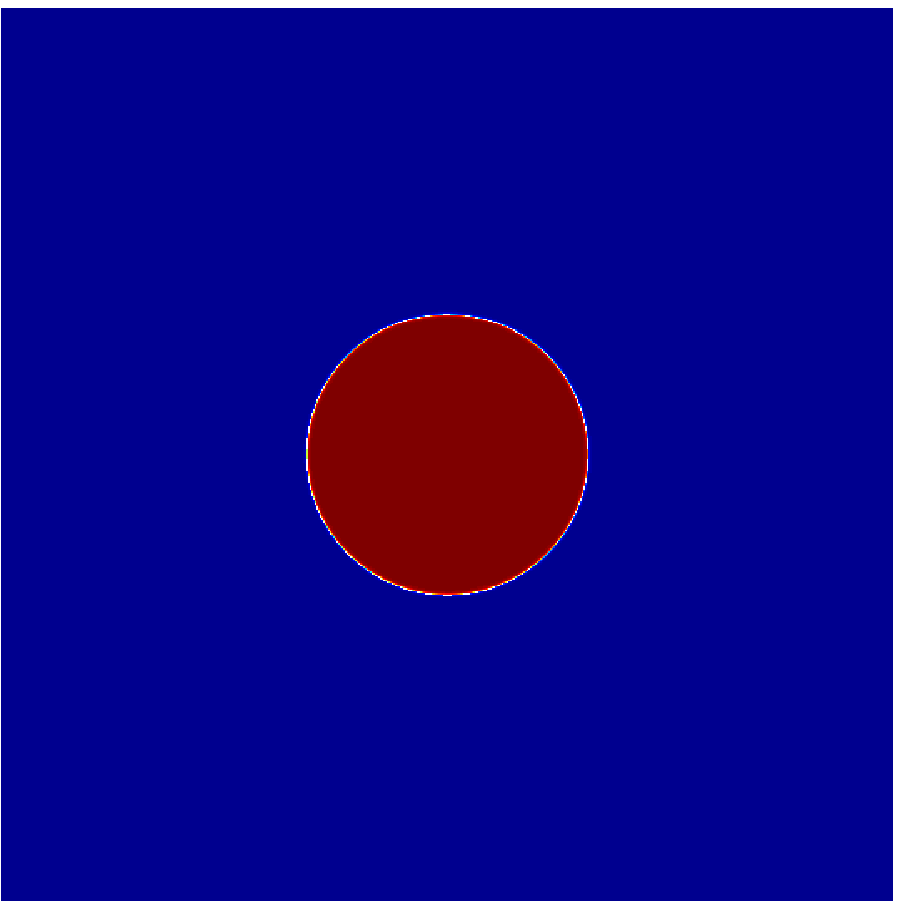}
\includegraphics[width=0.22\textwidth]{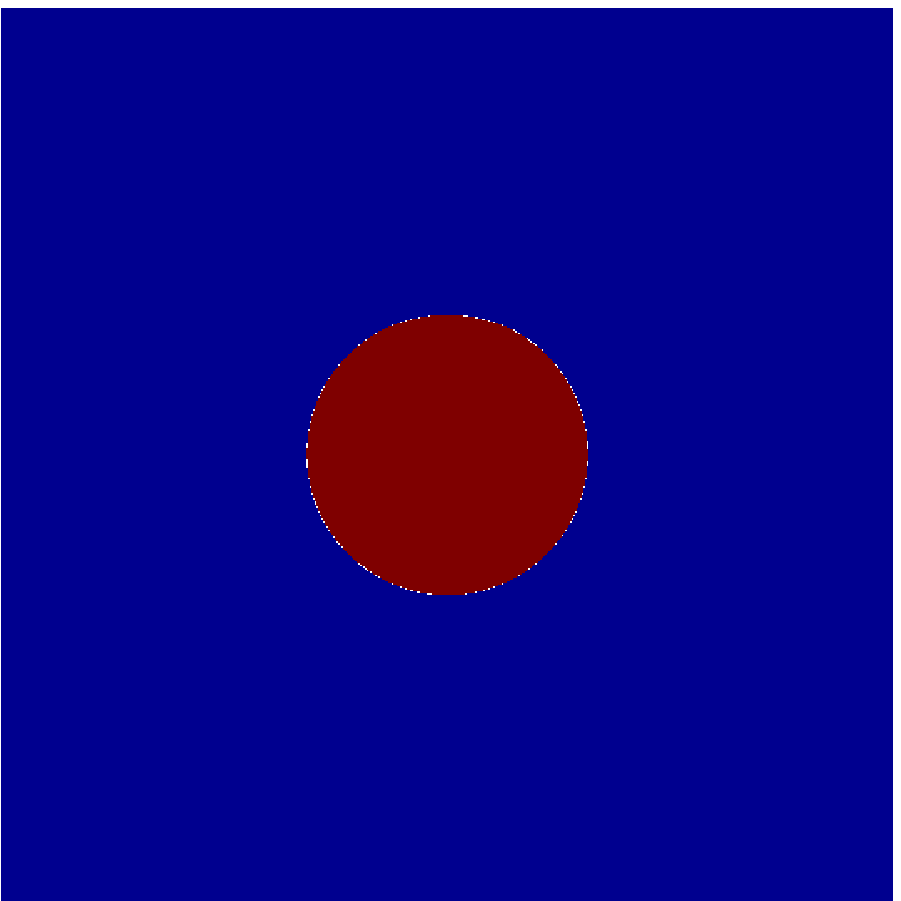}\hspace{0.2cm}
\includegraphics[width=0.3\textwidth]{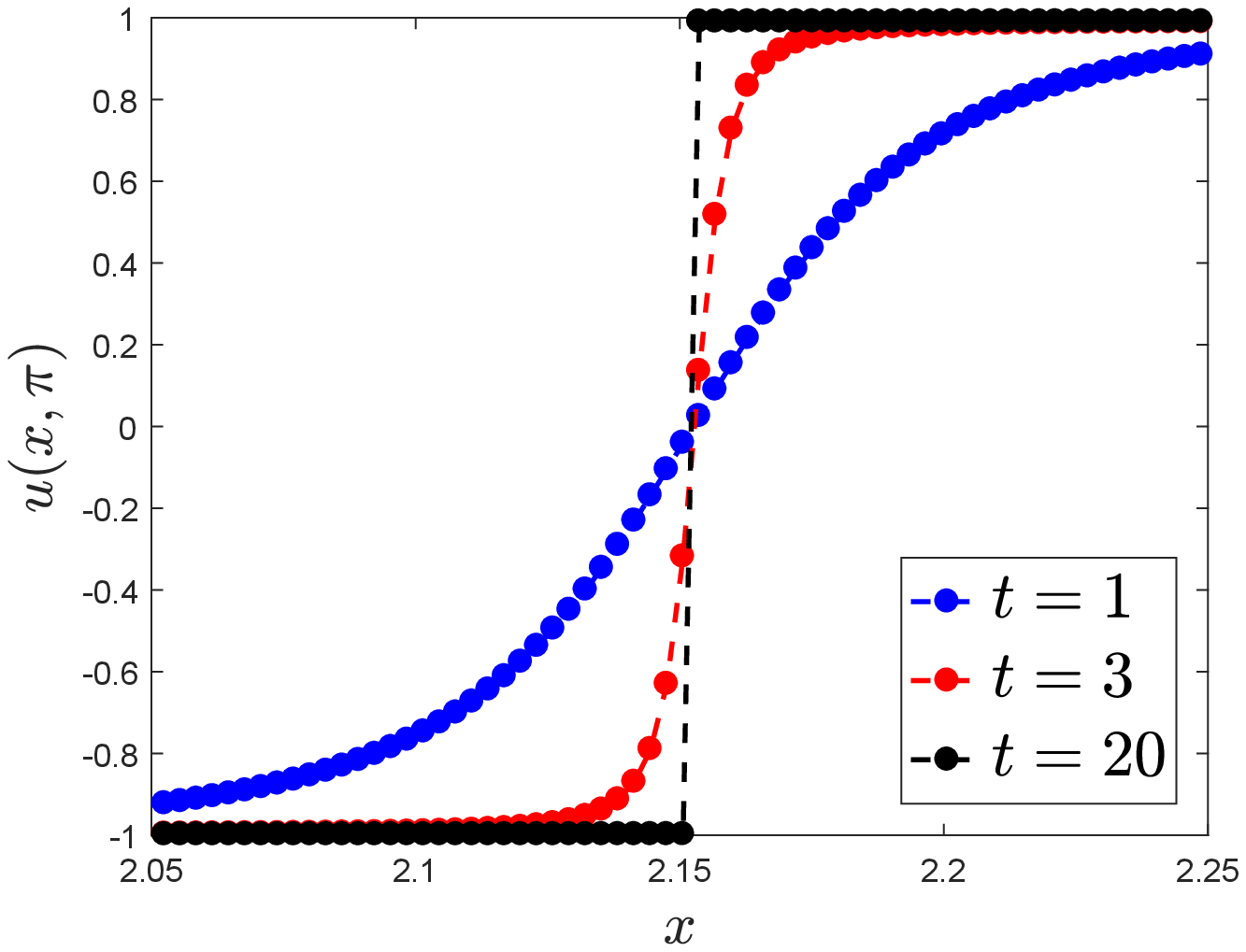}}
\caption{Evolutions of the bubble governed by the NAC equation with $\delta=0.2, 0.8, 3.2$ in  Example \ref{eg_bubble}.}
\label{fig_bubbles}
\end{figure}

\section{Conclusions}

We designed and analyzed  maximum principle preserving numerical schemes of for solving nonlocal Allen-Cahn equation
by using the quadrature-based finite difference method for spatial discretizations
and the exponential time differencing method for temporal integrations.
Especially, we developed the first order ETD and second order ETD Runge-Kutta schemes,
derive for both schemes the error estimates, and prove their energy stability as well as the asymptotic compatibility,
a special convergence considered for the numerical approximations of nonlocal models.
Numerical experiments are carried out to verify the theoretical results
and to study some more interesting properties of the solutions caused by the nonlocality.
The maximum principle preserving schemes studied here are up to the second order in time.
Whether higher order numerical schemes can preserve the maximum principle still remains open and is one of our future works.
In addition, for some other models, for instance, the nonlocal Cahn-Hilliard equation \cite{DuJuLiQi18,GuWaWi14},
the solution does not possess the maximum principle but is $L^\infty$ stable instead.
Numerical schemes naturally inheriting the $L^\infty$ stability, weaker than the maximum principle, are also worthy of study.


\end{document}